\newbox\tr@tto
\def\medint{\displaystyle\copy\tr@tto\kern-10.4pt\int}
\def\R{\mathbb R}
\def\H{\mathcal H}
\def\Om{\Omega}
\def\N{\mathbb N}
\def\e{\varepsilon}
\def\ro{\varrho}
\newtheorem{Thm}{Theorem}[section]
\newtheorem{Lem}{Lemma}[section]
\numberwithin{equation}{section}
\title{Regularity Results for an Optimal Design Problem with a Volume Constraint}
\author{Menita Carozza, Irene Fonseca and Antonia Passarelli di Napoli}
\begin{document}

\maketitle

\begin{abstract}
\noindent Regularity results for minimal  configurations of variational problems involving both bulk and surface energies and subject to a volume constraint are established. The bulk energies are convex functions with $p$-power growth, but are otherwise not subjected to any further structure conditions. For  a minimal configuration $(u,E)$, H\"older continuity of the function $u$ is proved  as well as  partial regularity of the boundary of the minimal set $E$. Moreover, full regularity of the boundary of the minimal set is obtained under suitable closeness assumptions on the eigenvalues of the bulk energies.
\end{abstract}

\noindent
{\footnotesize {\bf AMS Classifications.}  49N15, 49N60, 49N99.}

\noindent
{\footnotesize {\bf Key words.} regularity, nonlinear variational problem, free interfaces. }

\bigskip

\section{Introduction and statements}

\qquad In this paper we study minimal energy configurations of a mixture of two  materials in a bounded, connected open set $\Om\subset\R^n$, when the perimeter of the interface between the materials is penalized.
Precisely, the energy is given by
\begin{equation}\label{intro0}
{\mathcal I}(u,E):=\int_\Om \left(F(\nabla u)+\chi_{_{E}}G(\nabla u)\right)\,dx+P(E,\Om)\,,
\end{equation}
where $E\subset\Om$ is a  set of finite perimeter,  $u\in W^{1,p}(\Om)$, $p>1$, $\chi_E$ is the characteristic function of the set $E$ and $P(E,\Om)$ denotes the perimeter
 of $E$ in $\Om$.
We  assume that  $F,\,G\colon \mathbb{R}^n \to \mathbb{R}$ are $C^1$ integrands satisfying, for  $ p > 1$ and positive
constants  $\ell,\,L,\,\alpha,\,\beta >0$ and $\mu \geq 0$, the following growth and uniform strong $p$-convexity hypotheses:
$$
0\le F(\xi)\le L (\mu^2+| \xi|^2 )^{\frac{p}{2}}\,,   \leqno{\rm (F1)}
$$
$$
\int_{\Omega}F(\xi+\nabla \varphi)\, dx \ge \int_{\Omega}\Big( F(\xi) +\ell (\mu^2+| \xi|^2 +|\nabla\varphi|^2)^{\frac{p-2}{2}}|\nabla\varphi|^2\Big)\,dx\,,\leqno{\rm (F2)}
$$
and
$$
0\le G(\xi)\le \beta L (\mu^2+| \xi|^2 )^{\frac{p}{2}}\,,  \leqno{\rm (G1)}
$$
$$
\int_{\Omega} G(\xi+\nabla \varphi)\, dx \ge \int_{\Omega} \Big( G(\xi) +\alpha \ell (\mu^2+| \xi|^2 +|\nabla\varphi|^2)^{\frac{p-2}{2}}|\nabla\varphi|^2\Big)\,dx \leqno{\rm (G2)}
$$
for every $\xi\in\mathbb{R}^n$ and $\varphi\in C_0^1(\Omega)$.

  We are interested in  the following constrained problem
$$
\min\left\{{\mathcal I}(u,E):\,u=u_0\,\,\text{on}\,\,\partial\Om,\,|E|=d\right\}\,,\eqno (P)
$$
where $u_0\in W^{1,p}(\Om)$ and $0<d<|\Om|$ are  prescribed. Note that the strong  convexity of $F$ and $G$, expressed by (F2) and (G2), ensures the existence of solutions of the problem (P).
\par
\noindent Energies with surface terms competing with a volume term  appear in a plethora of phenomena in materials science such as  models for optimal design \cite{AB}, phase transitions \cite{Gur}, liquid crystals \cite{Lar}, epitaxy \cite{FFLMo} (see also \cite{FFLM}).

 Our first regularity result is the following:

\begin{Thm}\label{duedue}
Let $F$ and $G$ satisfy assumptions (F1)-(F2) and (G1)-(G2), respectively. Assume, in addition, that $F$ is $p$-homogeneous, i.e., $F(t\xi)=t^pF(\xi)$, for all $t\ge 0$.
If $(u,E)$ is a minimizer of problem (P), then $u\in C^{0,\frac{1}{p'}}_{\mathrm{loc}}(\Om)$, where $p'$ denotes the H\"older's conjugate exponent of $p$, i.e., $p'=\frac{p}{p-1}$. Moreover, $\H^{n-1}((\partial E\setminus \partial^*E)\cap\Omega)=0$.
\end{Thm}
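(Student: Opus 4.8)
The plan is to prove the two assertions in succession, after reducing the constrained problem (P) to an unconstrained, penalized one. First I would establish the standard penalization: there is $\Lambda_0>0$ such that for every $\Lambda\ge\Lambda_0$ the pair $(u,E)$ also minimizes $(v,A)\mapsto\mathcal I(v,A)+\Lambda\big||A|-d\big|$ among all $v=u_0$ on $\partial\Omega$ and all sets of finite perimeter $A\subset\Omega$. This follows by contradiction: a minimizing sequence $(v_j,A_j)$ for the penalized functional with $\Lambda=j\to\infty$ has $|A_j|\to d$, and since $\Omega$ is connected one restores the exact volume by adding or removing a small ball away from $\partial A_j$, contradicting the minimality of $(u,E)$ for $j$ large. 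After this step, any modification of $E$ inside a ball $B_r$ is admissible at the cost of at most $\Lambda\omega_n r^n$.

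For the H\"older continuity of $u$, fix $B_r=B_r(x_0)\Subset\Omega$ and any $v\in W^{1,p}(\Omega)$ with $v=u$ on $\Omega\setminus B_r$, and test minimality of $(u,E)$ against the competitor $(v,E\setminus B_r)$: since $\chi_{E\setminus B_r}\le\chi_{E}$ and $G\ge0$ the surface--bulk term only decreases, while $P(E\setminus B_r,\Omega)\le P(E,\Omega)+n\omega_n r^{n-1}$ and the volume penalization is $\le\Lambda\omega_n r^n$, so that
\[
\int_{B_r}F(\nabla u)\,dx\le\int_{B_r}F(\nabla v)\,dx+c\,r^{n-1}
\]
for all such $v$; i.e.\ $u$ is an $\omega$-minimizer of $w\mapsto\int F(\nabla w)$ with deviation $\omega(r)=c\,r^{n-1}$. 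Because $F$ is $p$-homogeneous, (F1)--(F2) yield the clean two-sided bound $c_F|\xi|^p\le F(\xi)\le C_F|\xi|^p$, and the regularity theory for such functionals gives that the $F$-harmonic replacement $w$ (minimizer of $\int_{B_r}F(\nabla w)$ with $w=u$ on $\partial B_r$) is of class $C^{1,\alpha}_{\mathrm{loc}}$ and obeys the decay $\int_{B_{\theta r}}|\nabla w|^p\le c\,\theta^n\int_{B_r}|\nabla w|^p$, $0<\theta<1$. Inserting $v=w$ above and using $\int_{B_r}F(\nabla w)\le\int_{B_r}F(\nabla u)$ gives $\int_{B_r}\big[F(\nabla u)-F(\nabla w)\big]\,dx\le c\,r^{n-1}$, which the uniform strong $p$-convexity (F2) --- applied to $u-w\in W^{1,p}_0(B_r)$ after the customary freezing and the Euler--Lagrange equation of $w$ --- upgrades to $\int_{B_r}|\nabla(u-w)|^p\,dx\le c\,r^{n-1}$ (for $1<p<2$ one first estimates $\int(\mu^2+|\nabla u|^2+|\nabla w|^2)^{(p-2)/2}|\nabla(u-w)|^2$ and then applies H\"older's inequality). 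Writing $\Phi(r)=\int_{B_r}|\nabla u|^p\,dx$, these two bounds combine into $\Phi(\theta r)\le c\,\theta^n\Phi(r)+c\,r^{n-1}$; the usual iteration lemma then gives $\Phi(r)\le c\,r^{n-1}$ on compact subsets of $\Omega$, and by the Morrey--Campanato characterization $u\in C^{0,1-1/p}_{\mathrm{loc}}(\Omega)=C^{0,1/p'}_{\mathrm{loc}}(\Omega)$.

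For the boundary of $E$, note that, with $u$ frozen, $E$ minimizes $A\mapsto P(A,\Omega)+\int_A G(\nabla u)\,dx+\Lambda\big||A|-d\big|$, where by the previous step $G(\nabla u)\le\beta L(\mu^2+|\nabla u|^2)^{p/2}$ belongs to the critical Morrey space $L^{1,n-1}_{\mathrm{loc}}(\Omega)$ ($\int_{B_s(x)}G(\nabla u)\le c\,s^{n-1}$), and moreover $\nabla u\in L^{p+\delta}_{\mathrm{loc}}$ with $\int_{B_s(x)}|\nabla u|^{p+\delta}\le c\,s^{n-1-\delta/p}$ (Gehring's lemma applied to the $\omega$-minimality above). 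Comparing with $E\setminus B_s(x)$ --- for which the forcing term decreases --- gives the clean inequality $P(E,B_s(x))\le\H^{n-1}\big(E^{(1)}\cap\partial B_s(x)\big)+\Lambda\omega_n s^n$, whence $|E\cap B_s(x)|\ge c_0 s^n$ for small $s$ at each $x\in\Omega\cap\mathrm{spt}|D\chi_E|$, by the isoperimetric inequality and the standard ODE argument; comparing with $E\cup B_s(x)$ produces the adverse term $\int_{B_s(x)\setminus E}G(\nabla u)$, which in the low-density regime $|B_s(x)\setminus E|\le\eta s^n$ is bounded, by H\"older's inequality and the higher integrability, by $c\,\eta^{\delta/(p+\delta)}s^{n-1}=o(s^{n-1})$, so that the same argument yields $|B_s(x)\setminus E|\ge c_0 s^n$ as well. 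The relative isoperimetric inequality then gives $c_0 s^{n-1}\le P(E,B_s(x))\le c_1 s^{n-1}$ for all small $s$ and all $x\in\Omega\cap\mathrm{spt}|D\chi_E|$; choosing the representative of $E$ with $\partial E\cap\Omega=\mathrm{spt}|D\chi_E|\cap\Omega$, the lower density bound together with a Vitali covering argument gives $\H^{n-1}\,\llcorner\,(\partial E\cap\Omega)\le c\,|D\chi_E|$, and since $|D\chi_E|$ is concentrated on $\partial^*E$ we conclude $\H^{n-1}\big((\partial E\setminus\partial^*E)\cap\Omega\big)=0$.

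The main obstacle is the \emph{sharpness} of the exponent $1/p'$ in the H\"older estimate: a crude comparison --- with a $p$-harmonic replacement, or with the radial extension of $u|_{\partial B_r}$ --- loses a multiplicative factor $C_F/c_F>1$ and only produces H\"older continuity with some small, non-explicit exponent. Hitting exactly $1/p'$ forces one both to match the perimeter scaling, whence the competitor must be $(v,E\setminus B_r)$ so that the deviation is $O(r^{n-1})=O(r^{\,n-p+p/p'})$, and to compare against the genuine $F$-harmonic replacement, for which $\int F(\nabla w)\le\int F(\nabla u)$ costs nothing and $|\nabla w|$ decays like $r^{n}\ll r^{n-1}$; it is exactly here that the $p$-homogeneity of $F$ is used, both for the clean two-sided $p$-growth (which fails in general when $1<p<2$ and $\mu>0$) and for the $C^{1,\alpha}$-regularity of $F$-harmonic functions. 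A secondary difficulty, in the last step, is the asymmetric role of $G$ under set competitors: only $E\setminus B_s$ contributes with the favourable sign, and the complementary density estimate has to be recovered by exploiting, via the higher integrability of $\nabla u$, that the adverse term $\int_{B_s\setminus E}G(\nabla u)$ is of lower order precisely in the regime where it appears.
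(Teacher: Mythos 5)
Your proof is correct, and it takes a genuinely different route from the paper for the H\"older estimate. The paper proves the gradient decay $\int_{B_\rho}|\nabla u|^p\le C\rho^{n-1}$ by a \emph{blow-up argument}: it first derives the estimate $\int_{B_r\cap E}|\nabla u|^p+P(E,B_r)\le c_0 r^{n-1}$ (by testing against $E\setminus B_r$ as you do), and then proves by contradiction a dichotomy --- either $\int_{B_r}|\nabla u|^p\le h_0 r^{n-1}$ or $\int_{B_{\tau r}}|\nabla u|^p\le M\tau^{n-\delta}\int_{B_r}|\nabla u|^p$ --- using rescaled functions $v_h(y)=(u(x_h+r_h y)-a_h)/(\varsigma_h r_h)$, strong $W^{1,p}_{\mathrm{loc}}$ convergence of $v_h$, and (F2). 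Your argument instead exhibits $u$ as an $\omega$-minimizer of $w\mapsto\int F(\nabla w)$ with deviation $cr^{n-1}$, compares with the $F$-harmonic replacement, and uses (F2) plus the Lipschitz estimate of Theorem \ref{fofu} to close the iteration. This is the same strategy the paper adopts for Theorems \ref{due} and \ref{duetre} (there against the $(F+G)$-harmonic replacement), so you have in effect given a unified ``comparison'' proof of all three theorems where the paper switches methods.

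Two observations worth recording. First, your argument exposes that the $p$-homogeneity of $F$ is used only mildly: the extra additive $\mu^p$-terms coming from Lemma \ref{grobelow} and the constant in \eqref{stimafofu} contribute $cr^n$, which is dominated by $cr^{n-1}$, so the comparison argument goes through under (F1)--(F2) alone. The paper, by contrast, uses the homogeneity essentially in the blow-up --- to divide the rescaled minimality inequality by $\varsigma_h^p$ (see the step from \eqref{decay60} to \eqref{decay61}) --- so your approach is cleaner on this point. Your appeal to $C^{1,\alpha}$-regularity of the $F$-harmonic replacement is, however, more than is available for a merely $C^1$ integrand $F$; the Lipschitz estimate of Theorem \ref{fofu} is what is actually justified and is all that the $\theta^n$-decay requires.

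Second, on the boundary regularity: the paper delegates this to ``arguing as in \cite[Theorem 2.2]{AB} with the obvious modifications,'' while you correctly spell out the asymmetry (inward competitors $E\setminus B_s$ are free, outward competitors $E\cup B_s$ produce the adverse term $\int_{B_s\setminus E}G(\nabla u)$) and the need for Gehring-type higher integrability $\int_{B_s}|\nabla u|^{p+\delta}\le c\,s^{n-1-\delta/p}$ to make the outward density estimate close. You should verify that the reverse H\"older inequality does apply under $\omega$-minimality with deviation $cr^{n-1}$ --- it does, since $\fint_{B_r}|\nabla u|^p\le cr^{-1}$ keeps the perturbation subordinate to the main term in the Caccioppoli/Sobolev--Poincar\'e chain --- but this step deserves to be written out rather than asserted. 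Subject to that, your argument is a complete and in places more informative version of the proof.
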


\par
 Previous results in this direction have been obtained in \cite{AB} and \cite{Lin}.  Precisely, Ambrosio and Buttazzo  (\cite{AB}) and Lin (\cite{Lin}) considered problems of the form
\begin{equation}\label{pbab}
\int_\Om\bigl(\sigma_E(x)|\nabla u|^2\bigr)\,dx+P(E,\Om)
\end{equation}
with $u=0$ on $\partial\Om$ and  $ \sigma_E(x):=a\chi_E+b\chi_{\Omega\setminus E}$ for $a$ and $b$ positive constants. It was proven in \cite{AB}  that minimizers of \eqref{pbab} exist  and that if $(u,E)$ is a minimal configuration then $u$ is locally H\"older continuous in $\Om$ and, up to a set of $\mathcal{H}^{n-1}$ measure zero, there is no difference between the theoretic measure boundary of $E$ and its topological boundary. Recently, in \cite{EF}, it has been proven that there exists $\gamma=\gamma(n)$ such that,  for  a minimal configuration $(u,E)$ of \eqref{pbab} if $1<a/b<\gamma(n)$, then $u$ is locally H\"older continuous in $\Om$  and $\partial^*E$,  the reduced boundary of $E$, is a $C^{1,\alpha}$-hypersurface.
Moreover,  Lin (\cite{Lin}) showed that if $(u,E)$ is a minimizer of \eqref{pbab} among all configurations such that $u$ and $\partial E$ are prescribed on $\partial\Om$, then $u\in C^{0,1/2}(\Om)$ and $\partial^*E$,  the reduced boundary of $E$, is a $C^{1,\alpha}$-hypersurface away from a  singular set $\Sigma$ of $\H^{n-1}$ measure zero.
In \cite{KL}, Lin and Kohn establish a partial regularity result for the boundary of the minimal set of the  problem
\begin{equation}\label{intro1}
{\mathcal I}(u,E):=\int_\Om \left(F(x,u,\nabla u)+\chi_{_{E}}G(x,u,\nabla u)\right)\,dx+P(E,\Om)\,,
\end{equation}
 subject to the following constraints
 $$u=\Phi\,\,\,\mathrm{on}\,\,\,\partial\Omega\,\,\,\mathrm{and}\,\,\, |E|=d,$$
 requiring that $F$ and $G$ satisfy severe structure assumptions and have quadratic growth.
A  more detailed analysis of the minimal configurations of (P) was carried out in the two dimensional case by  Larsen in \cite{Lar}. However, also in this case only  partial regularity of $\partial^* E$ is obtained.
\par
 All minimum problems considered in the above mentioned  papers have  bulk energies of Dirichlet type with quadratic growth, i.e., of  the form $|\cdot|^2$.
Here, in Theorem \ref{duedue} we treat constrained problems,  we do not require any additional structure assumption on the bulk energies, and we  assume  $p$-growth (not necessarily  $p=2$) with respect to the gradient.

\noindent We point out that the H\"older exponent $\frac{1}{p'}$ in Theorem \ref{duedue} is critical, in the sense that the two terms in the energy functional \eqref{intro0} locally have the same dimension $n-1$ (under appropriate scalings). Actually,  we will show that $u\in C^{0,\frac{1}{p'}+\delta}_{\mathrm{loc}}(\Omega)$, for some $\delta>0$, under suitable conditions on the eigenvalues of $F$ and $G$, that, together with   a result in \cite{Tam} (see Theorem \ref{tre} in Section 2), allows us to conclude that $\partial^* E$ is a $C^{1,\widetilde\delta}$ hypersurface, for some $0<\widetilde\delta<1$. More precisely, we have

\begin{Thm}\label{due}
Let $F$ and $G$ satisfy assumptions (F1)-(F2) and (G1)-(G2), respectively. There exist $\gamma=\gamma(n,p,\frac{\ell}{L})<1$ and $\widetilde\sigma=\widetilde \sigma(n,p)$ such that if
\begin{equation}\label{due7prima}
\left(\frac{\beta}{\alpha+1}\right)\left(\frac{\beta+1}{\alpha+1}\right)^{\widetilde\sigma}\le \gamma
\end{equation}
and if $(u,E)$ is a minimizer of problem (P), then $u\in C^{0,\frac{1}{p'}+\delta}_{\mathrm{loc}}(\Om)$ for some positive $\delta$ depending on $n,p,\alpha,\beta$. Moreover $\partial^*E$ is a $C^{1,\widetilde\delta}$-hypersurface in $\Omega$, for some $\widetilde\delta<\frac{1}{2}$ depending on $n,p,\alpha,\beta$, and $\H^s((\partial E\setminus \partial^*E)\cap\Omega)=0$ for all $s>n-8$.
\end{Thm}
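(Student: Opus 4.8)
The plan is to run a two-scale decay argument for $u$ at a point $x_0\in\Omega$, comparing the minimizer with the solution of a frozen, $E$-independent problem, and to bootstrap the improved H\"older exponent into a density estimate for the perimeter of $E$ that yields the $C^{1,\widetilde\delta}$-regularity via Tamanini's criterion (Theorem \ref{tre}). First I would fix a ball $B_\rho(x_0)\subset\subset\Omega$ and let $v$ minimize $\int_{B_\rho}\big(F(\nabla v)+\chi_E(x_0,\rho)G(\nabla v)\big)$ with its own boundary datum $v=u$ on $\partial B_\rho$ — actually, since $E\cap B_\rho$ is genuinely present, the cleaner comparison is with $w$ minimizing $\int_{B_\rho}H(\nabla w)\,dx$ where $H:=F$ if the density of $E$ in $B_\rho$ is small and $H:=F+G$ if it is close to full, both of which are strongly $p$-convex with ellipticity ratio controlled by $\ell/L$ and by $\alpha,\beta$. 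Standard $p$-harmonic-type decay (Uhlenbeck/De Giorgi estimates for such convex integrands) gives, for all $\tau\in(0,1)$,
\begin{equation}\label{plan1}
\int_{B_{\tau\rho}}\!\big(\mu^2+|\nabla w|^2\big)^{\frac{p}{2}}\,dx\le c\,\tau^{n}\!\int_{B_{\rho}}\!\big(\mu^2+|\nabla w|^2\big)^{\frac{p}{2}}\,dx,
\end{equation}
while minimality of $(u,E)$ tested against $(w,E)$, together with (F2), (G2), controls $\int_{B_\rho}|\nabla u-\nabla w|^p$ by the perimeter term $P(E,B_\rho)$ and by the "wrong-phase" error $\int_{B_\rho}|\chi_E-\chi_{E(x_0,\rho)}|G(\nabla u)$, which in turn is estimated by $\big(\tfrac{\beta}{\alpha+1}\big)$ times a density-normalized perimeter. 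The point of hypothesis \eqref{due7prima} is exactly that this contraction constant, after one more interpolation step that produces the factor $\big(\tfrac{\beta+1}{\alpha+1}\big)^{\widetilde\sigma}$, stays below the threshold $\gamma(n,p,\ell/L)$ coming from \eqref{plan1}, so iterating the two inequalities yields a Morrey-type estimate
\begin{equation}\label{plan2}
\int_{B_\rho(x_0)}\big(\mu^2+|\nabla u|^2\big)^{\frac{p}{2}}\,dx\le c\,\rho^{n-p+p\delta}
\end{equation}
for some $\delta>0$ depending on $n,p,\alpha,\beta$; by Morrey's embedding this is precisely $u\in C^{0,\frac{1}{p'}+\delta}_{\mathrm{loc}}(\Omega)$, improving Theorem \ref{duedue}.

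The second half is to upgrade this to regularity of $\partial^*E$. Here I would show that $E$ is a $(\Lambda,r_0)$-minimizer (almost-minimizer) of the perimeter: for any competitor $E'$ with $E'\triangle E\subset\subset B_\rho(x_0)$ and $|E'|=|E|$ (the volume constraint is absorbed by a standard volume-fixing deformation argument à la Esposito–Fusco, paying an error $\le c\,|E'\triangle E|$), minimality of $(u,E)$ against $(u,E')$ gives
\begin{equation}\label{plan3}
P(E,B_\rho)\le P(E',B_\rho)+\int_{B_\rho}\big|\chi_{E}-\chi_{E'}\big|\,G(\nabla u)\,dx\le P(E',B_\rho)+c\!\int_{B_\rho}\!\big(\mu^2+|\nabla u|^2\big)^{\frac p2}dx,
\end{equation}
and the last integral is $\le c\,\rho^{n-p+p\delta}=c\,\rho^{(n-1)+(p\delta-(p-1))}$. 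Choosing parameters so that $p\delta> p-1$ — equivalently $\delta>\tfrac1{p'}\cdot\tfrac{p-1}{1}$, which is the role of the precise dependence of $\gamma$ and $\widetilde\sigma$ — makes the error superlinear in $\rho^{n-1}$, i.e. $E$ is an almost-minimizer with a genuine decay rate. Tamanini's theorem (Theorem \ref{tre} of Section 2) then yields that $\partial^*E$ is $C^{1,\widetilde\delta}$ with $\widetilde\delta<\tfrac12$ depending on $n,p,\alpha,\beta$, and its standard companion (the dimension reduction / federer-type bound for almost-minimizers of perimeter) gives $\mathcal H^s\big((\partial E\setminus\partial^*E)\cap\Omega\big)=0$ for every $s>n-8$.

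The main obstacle is the coupling between the two estimates: the decay \eqref{plan2} for $u$ requires controlling the "wrong-phase" term by the perimeter, but at the stage one proves \eqref{plan2} one only knows $P(E,\cdot)$ has the trivial density bound, not the almost-minimality decay — so the argument must be run as a joint iteration, carrying a hypothesis of the form "$\int_{B_\rho}(\mu^2+|\nabla u|^2)^{p/2}\le c\rho^{n-1+2\eta}$ and $P(E,B_\rho)\le c\rho^{n-1}$" down the dyadic scales simultaneously, and checking that smallness condition \eqref{due7prima} makes the combined transfer matrix of the two quantities a contraction in the relevant exponents. Getting the exact exponent $\widetilde\sigma=\widetilde\sigma(n,p)$ and verifying that the threshold $\gamma=\gamma(n,p,\ell/L)$ in the $F$-only (resp. $(F+G)$-only) decay is the one that closes the loop is the delicate bookkeeping; everything else (the volume-fixing lemma, Morrey embedding, invoking Tamanini) is by now standard.
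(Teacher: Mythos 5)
Your high-level strategy (compare $u$ with the minimizer of a regular strongly $p$-convex functional, extract a decay rate for $\int_{B_\rho}|\nabla u|^p$, feed it into Tamanini's criterion through the perimeter-excess estimate) is the right one and matches the paper's. But there are two concrete errors that break your plan as written. First, the scaling in your decay estimate \eqref{plan2} is wrong: $\int_{B_\rho}(\mu^2+|\nabla u|^2)^{p/2}\le c\,\rho^{\,n-p+p\delta}$ only yields $u\in C^{0,\delta}_{\mathrm{loc}}$ via Morrey, not $C^{0,\frac{1}{p'}+\delta}_{\mathrm{loc}}$; the paper proves $\int_{B_\rho}|\nabla u|^p\le c\,\rho^{\,n-1+p\delta}$ (note $n-1=n-p+p/p'$), and it is this extra factor $\rho^{p/p'}$ that produces the exponent $\frac{1}{p'}+\delta$. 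This error propagates: when you then require $p\delta>p-1$, i.e.\ $\delta>1/p'$, to make the error term $\rho^{n-p+p\delta}$ superlinear in $\rho^{n-1}$, you impose a lower bound on $\delta$ that the comparison argument cannot possibly deliver (the achievable $\delta$ is small, tending to $0$ as the smallness condition on $\beta/(\alpha+1)$ saturates). With the correct decay $\rho^{n-1+p\delta}$, \emph{any} $\delta>0$ suffices, and you may shrink $p\delta<\tfrac12$ to apply Theorem~\ref{tre}.

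Second, the "joint iteration" obstacle you identify is an artifact of a misstatement: if you test minimality of $(u,E)$ against $(w,E)$ (same set), the perimeter terms on both sides cancel exactly, and no term $P(E,B_\rho)$ appears in the bound on $\int_{B_\rho}|\nabla u-\nabla w|^p$. The paper exploits precisely this: it always compares with the minimizer $v$ of $\int_{B_r}(F+G)(\nabla w)\,dx$ (not a density-dependent choice of $F$ or $F+G$), writes the Euler--Lagrange equations for $u$ and $v$, and subtracts to get the single mismatch term $\int_{B_r\setminus E}D_\xi G(\nabla u)\cdot\nabla\varphi\,dx$, which by (G1) and Cauchy--Schwarz is bounded by $\beta L$ times quantities involving $\nabla u$ and $\nabla v$ only, never the perimeter. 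Hence the gradient decay is obtained in closed form, \emph{before} and independently of any perimeter information, and Step~2 (perimeter excess $\le c\rho^{n-1+p\delta}$) is a one-line consequence. Your density-dependent choice of $H$ could in principle work (it is closer in spirit to Esposito--Fusco), but it trades the paper's clean decoupling for exactly the delicate bookkeeping you flag and leave unresolved.
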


\noindent Consider the prototype integrands
$$F(\xi):= L(\mu^2+|\xi|^2)^{\frac{p}{2}}\qquad\qquad\mathrm{and}\qquad\qquad G(\xi):=\beta L (\mu^2+|\xi|^2)^{\frac{p}{2}}.$$
In this case the parameter $\alpha$  in assumption (G2) coincides with $\beta$ and condition  \eqref{due7prima} reduces to
$$\beta\le \frac{\gamma}{1-\gamma},$$
with $\gamma=\gamma(n,p)<1$.

\noindent The functional  \eqref{pbab} is a  particular case of \eqref{intro0}, setting
 $$F(\xi): = b|\xi|^2\qquad\qquad\mathrm{and}\qquad\qquad G(\xi):=(a-b) |\xi|^2,\qquad \quad a>b.$$
In this case, the parameters $\alpha,\beta$ in (G2) and (G1) are given by
  $$\beta=\alpha:=\frac{a-b}{b},$$
and  condition  \eqref{due7prima} becomes   $$1<\frac{a}{b}\le \frac{1}{1-\gamma}.$$
So, Theorem \ref{due} gives back  Theorem 2 in \cite{EF} as a particular case.

\noindent Further, without imposing any condition on the eigenvalues of the integrands, we are still able to obtain the following partial regularity result:
\begin{Thm}\label{duetre}
Assume that (F1)-(F2) and (G1)-(G2) hold and let
 $(u,E)$ be a minimizer of problem (P). Then there exists an open set $\Omega_0\subset \Omega$ with full measure such that $u\in C^{0,\eta}(\Om_0)$, for every positive  $\eta<1$. In addition, $\partial^*E\cap \Om_0$ is a $C^{1,\widetilde\eta}$-hypersurface in $\Omega_0$, for every $0<\widetilde\eta<\frac{1}{2}$, and $\H^{s}((\partial E\setminus \partial^*E)\cap\Om_0)=0$  for all $s>n-8$.
\end{Thm}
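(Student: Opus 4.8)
The plan is to localize, to a suitable \emph{regular set} $\Om_0$, the excess-decay technique underlying Theorems~\ref{duedue} and~\ref{due} together with Tamanini's partial regularity theorem (Theorem~\ref{tre}), and then to run a self-improving iteration in which the H\"older exponent of $u$ and the $C^{1,\cdot}$ exponent of $\partial^*E$ increase, respectively, towards the critical values $1$ and $\tfrac12$. First I would remove the volume constraint: by a standard penalization argument, a minimizer $(u,E)$ of $(P)$ is an unconstrained minimizer of ${\mathcal I}(v,F)+\Lambda\big||F|-d\big|$ for $\Lambda$ large, so that $(u,E)$ is a $\Lambda$-minimizer, i.e.\ for $B_r(x_0)\Subset\Om$ and competitors coinciding with $(u,E)$ off $B_r(x_0)$,
\begin{multline*}
\int_{B_r(x_0)}\!\big(F(\nabla u)+\chi_E G(\nabla u)\big)\,dx+P(E,B_r(x_0))\\
\le\int_{B_r(x_0)}\!\big(F(\nabla v)+\chi_F G(\nabla v)\big)\,dx+P(F,B_r(x_0))+\Lambda|E\triangle F|.
\end{multline*}
From this inequality, by now-standard arguments for almost-minimizers of this type of functional and using only (F1)--(F2), (G1)--(G2), one gets the perimeter density bound $P(E,B_r(x_0))\le Cr^{n-1}$, the lower volume densities of $E$ and $\Om\setminus E$ at points of $\partial E$ (whence $|\partial E\cap\Om|=0$ for the representative with $\partial E=\operatorname{supp}D\chi_E$), a Caccioppoli inequality for $u$, and, via Gehring's lemma, the higher integrability $\nabla u\in L^{p+\sigma_0}_{\mathrm{loc}}(\Om)$ for some $\sigma_0>0$.

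Next I would fix the regular set. Let ${\mathcal E}(x_0,r)$ be the hybrid excess used in the proofs of Theorems~\ref{duedue}--\ref{due}, controlling simultaneously the flatness of $\partial E$ in $B_r(x_0)$ and the oscillation of $\nabla u$ about its one-sided mean values, and let $\e_0>0$ be the threshold in the associated $\e$-regularity statement. Define
$$
\Om_0:=\Big\{x_0\in\Om:\ {\mathcal E}(x_0,r)<\e_0\ \text{ for some }r<\operatorname{dist}(x_0,\partial\Om)\Big\}.
$$
Since at a fixed (generic) radius ${\mathcal E}(\cdot,r)$ depends continuously on the center, $\Om_0$ is open; and ${\mathcal E}(x_0,r)\to0$ as $r\to0$ at every $x_0\notin\partial E$ which is a Lebesgue point of $\nabla u$. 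As $|\partial E\cap\Om|=0$ and a.e.\ point is a Lebesgue point of $\nabla u$, the complement $\Om\setminus\Om_0$ is Lebesgue-null, i.e.\ $\Om_0$ has full measure.

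The heart of the matter is a decay lemma, proved along the lines of Theorems~\ref{duedue}--\ref{due}: there are $\e_0,C>0$ such that for every $\eta\in(1/p',1)$ there is $r_\eta>0$ with the property that, if $x_0\in\Om_0$ and ${\mathcal E}(x_0,r)<\e_0$ for some $r<r_\eta$, then
$$
\int_{B_\rho(x_0)}(\mu^2+|\nabla u|^2)^{\frac p2}\,dx\le C_\eta\,\rho^{\,n-p(1-\eta)}\qquad\text{for every }\rho<r/2.
$$
The proof is by comparison: on $B_\rho(x_0)$ one freezes $\chi_E$ to the characteristic function of the approximate tangent half-space $H$ (close to $\partial E$ thanks to $\e$-regularity and Theorem~\ref{tre}) and replaces $DF,DG$ by their values at the one-sided means of $\nabla u$, compares $u$ with the minimizer of the resulting two-phase problem, and estimates the perturbations using $\nabla u\in L^{p+\sigma_0}$ together with $|B_\rho\cap(E\triangle H)|\le C\rho^{\,n+\theta}$, which comes from $\partial E\in C^{1,\theta}$; a Campanato iteration then gives the displayed Morrey bound, i.e.\ $u\in C^{0,\eta}$ near $x_0$. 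Feeding this back, comparing $E$ in $B_r(x_0)$ with an arbitrary competitor $F$ (keeping $u$) and bounding $\int_{B_r}|\chi_E-\chi_F|\,G(\nabla u)\le\beta L\int_{B_r}(\mu^2+|\nabla u|^2)^{p/2}$ by the Morrey estimate shows that near $x_0$ the set $E$ is an $(\omega,r_0)$-minimizer of the perimeter with $\omega(r)\le c\,r^{\,1-p(1-\eta)}$; Theorem~\ref{tre} then yields that $\partial^*E$ is a $C^{1,\theta}$-hypersurface near $x_0$ with $\theta=\tfrac12\big(1-p(1-\eta)\big)$, while $\partial E\setminus\partial^*E$ has Hausdorff dimension at most $n-8$ by the usual dimension-reduction argument. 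Letting $\eta\uparrow1$ — for each fixed $\eta<1$ the constants and radii above are uniform on compact subsets of $\Om_0$, but degenerate as $\eta\to1$, which is exactly why the endpoints are not attained — we obtain $u\in C^{0,\eta}(\Om_0)$ for every $\eta<1$, $\partial^*E\cap\Om_0$ a $C^{1,\widetilde\eta}$-hypersurface for every $\widetilde\eta<\tfrac12$, and $\mathcal H^{s}\big((\partial E\setminus\partial^*E)\cap\Om_0\big)=0$ for all $s>n-8$; at points $x_0\in\Om_0\setminus\partial E$ everything is trivial, since there $\chi_E$ is constant and $u$ locally minimizes an autonomous strongly $p$-convex functional.

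The main obstacle is the coupling inside the decay lemma: one has to quantify precisely how the $C^{1,\theta}$-regularity of $\partial E$ bounds $|B_\rho\cap(E\triangle H)|$, hence the size of the bulk perturbation, and conversely how the decay rate of the bulk energy controls the modulus $\omega$ fed into Theorem~\ref{tre}, and then to check that these two estimates can be iterated with exponents increasing to $1$ and $\tfrac12$ while the smallness thresholds and the comparison radii stay uniform on compact subsets of $\Om_0$. A secondary difficulty, due to the fact that no structure or homogeneity is assumed on $F,G$, is that the Caccioppoli inequality, the higher integrability and the first decay estimate must be re-derived from (F1)--(F2), (G1)--(G2) alone rather than quoted from Theorem~\ref{duedue}; this is also the point at which the merely $C^1$ regularity of the integrands is felt, since the comparison above produces a Campanato iteration governed by the moduli of continuity of $DF$ and $DG$ — which the hypotheses only guarantee to be infinitesimal, not powers — so that the Lipschitz endpoint for $u$ cannot be reached.
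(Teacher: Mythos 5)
Your proposal takes a genuinely different route from the paper, and one that carries substantially more machinery than is actually needed. The paper's proof does not introduce a hybrid excess, does not establish an $\varepsilon$-regularity theorem for the coupled problem, and never linearizes by freezing $\chi_E$ to a half-space and $DF,DG$ to constant matrices. Instead, $\Om_0$ is defined purely in terms of the scaled Dirichlet energy of $\nabla u$: for $0<\delta<1/p$ one sets $\Om_0=\{x:\limsup_{\rho\to 0}\rho^{-(n-1+p\delta)}\int_{B_\rho(x)}|\nabla u|^p=0\}$, which has full measure by the Lebesgue density theorem. The key observation is that the decay inequality already derived in the proof of Theorem \ref{due} (comparison with the minimizer $v$ of $\int_{B_r}(F+G)(\nabla w)$, giving $\int_{B_\ro}|\nabla u|^p\le c[\zeta+(\widetilde L/\widetilde\ell)^{(\sigma+1)/p}(\ro/r)^{n/p}]^p\int_{B_r}|\nabla u|^p+cr^n$) holds with no restriction on $\alpha,\beta$; what the smallness of $\beta/(\alpha+1)$ bought in Theorem \ref{due}, namely that $\zeta$ be small, is replaced here by the smallness of $\int_{B_r}|\nabla u|^p$ itself: on $\Om_0$ one can write $\int_{B_r}|\nabla u|^p\le \varepsilon^{1/2}r^{(n-1+p\delta)/2}(\int_{B_r}|\nabla u|^p)^{1/2}$ and absorb $\zeta$ into $\varepsilon^{1/2}$ by Young. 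Iteration then gives $\int_{B_\rho}|\nabla u|^p\le c\rho^{n-1+p\delta}$ for $\delta$ arbitrarily close to $1/p$, hence $u\in C^{0,\eta}$ for every $\eta<1$ via Morrey, and the perimeter part follows by the same comparison and Tamanini's theorem as in Theorem \ref{due}. So the endpoint $\eta=1$ is lost because $\delta$ cannot reach $1/p$ in the Lebesgue density argument, not because of the modulus of continuity of $DF,DG$ as you conjecture.

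Beyond being heavier than necessary, your sketch has a concrete circularity that is not resolved. The decay lemma you state takes as input the bound $|B_\rho\cap(E\triangle H)|\le C\rho^{n+\theta}$ ``which comes from $\partial E\in C^{1,\theta}$,'' but $C^{1,\theta}$-regularity of $\partial^*E$ near $x_0$ is precisely the conclusion one is aiming for; the trivial estimate $\psi(E,B_r)\le cr^{n-1}$ feeds no positive exponent into Theorem \ref{tre} and therefore gives no starting point for the self-improving iteration. To close this loop you would first have to prove an $\varepsilon$-regularity / flatness-improvement theorem for $(\Lambda,r_0)$-almost-minimizers of the coupled bulk-plus-perimeter energy (together with the density estimates and higher integrability you invoke as ``now-standard''), none of which is available in the paper and all of which is bypassed by the argument above. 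Your definition of $\Om_0$ and the claim of full measure (via $|\partial E\cap\Om|=0$ plus Lebesgue points of $\nabla u$) are reasonable once the density estimates are in place, but as written the proof rests on unproven auxiliary results and a missing base case for the iteration.
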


 In the study   of regularity properties,  the constraint $|E|=d$ introduces extra difficulties, since one can work only with variations which keep the volume constant. The next theorem allows us to circumvent this extra difficulties, ensuring that every minimizer of the constrained problem (P) is also a minimizer of a suitable unconstrained energy functional with a volume penalization.

\begin{Thm}\label{uno}
There exists $\lambda_0>0$ such that if  $(u,E)$ is  a minimizer of the functional
\begin{equation}\label{unozero}
{\mathcal I}_\lambda(v,A):=\int_\Om F(\nabla v)+\chi_{_{A}}G(\nabla v)\,dx+P(A,\Om)+\lambda\big||A|-d\big|
\end{equation}
for some $\lambda\ge \lambda_0$ and
among all configurations $(v,A)$ such that $v=u_0$ on $\partial\Om$, then $|E|=d$ and $(u,E)$ is a minimizer of problem (P). Conversely, if $(u,E)$ is a minimizer of the problem (P), then it is a minimizer of \eqref{unozero}, for all $\lambda\geq\lambda_0$.
\end{Thm}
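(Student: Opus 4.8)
The plan is to argue by contradiction, following the by-now classical penalization scheme of Esposito–Fusco (and its antecedents in the minimal-surface and Mumford–Shah literature). Suppose no such $\lambda_0$ exists. Then there is a sequence $\lambda_h\to+\infty$ and, for each $h$, a minimizer $(u_h,E_h)$ of $\mathcal I_{\lambda_h}(\cdot,\cdot)$ among configurations with boundary datum $u_0$, such that $|E_h|\ne d$. Comparing $(u_h,E_h)$ with a fixed admissible competitor for $(P)$ — for instance a fixed pair $(\bar u,\bar E)$ with $|\bar E|=d$ — gives
\begin{equation*}
\lambda_h\big||E_h|-d\big|\le \mathcal I_{\lambda_h}(u_h,E_h)\le \mathcal I(\bar u,\bar E)=:C_0,
\end{equation*}
so $\big||E_h|-d\big|\le C_0/\lambda_h\to0$, and moreover $\mathcal I(u_h,E_h)\le C_0$ uniformly. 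From the uniform bound on $\int_\Om F(\nabla u_h)$, the strong $p$-convexity (F2) (with $\xi=0$) gives a uniform $W^{1,p}$ bound on $u_h-u_0$, while the uniform bound on $P(E_h,\Om)$ together with $|E_h|\le|\Om|$ gives compactness in $BV$; passing to a subsequence, $u_h\rightharpoonup u$ in $W^{1,p}$ and $\chi_{E_h}\to\chi_E$ in $L^1$, with $|E|=d$. Lower semicontinuity (convexity of $F$, $G$ and the standard lower semicontinuity of the perimeter) shows $(u,E)$ is admissible for $(P)$.

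The heart of the argument is a volume-adjustment lemma: there exist constants $\Lambda>0$ and $r_0>0$, depending only on $n$, $p$, $L$, $\beta$, and on the fixed minimizer $(u,E)$ of $(P)$, such that for every configuration $(v,A)$ with $v=u_0$ on $\partial\Om$ and $\big||A|-d\big|$ small, one can find a competitor $(v,\widetilde A)$ with $|\widetilde A|=d$, with $\widetilde A=A$ outside a small ball, and with
\begin{equation*}
P(\widetilde A,\Om)\le P(A,\Om)+\Lambda\big||A|-d\big|,\qquad
\Big|\int_\Om\chi_{\widetilde A}G(\nabla v)-\int_\Om\chi_{A}G(\nabla v)\Big|\le \Lambda\big||A|-d\big|.
\end{equation*}
This is produced by the usual device: fix a point $x_0$ where $E$ has density $0$ or $1$ in $\Om$ (or rather two such points, one of each type, which exist since $0<d<|\Om|$), and modify $A$ inside a small ball $B_r(x_0)$ by adding or removing a spherical shell to correct the volume; the relative isoperimetric inequality controls the change in perimeter by $C r^{n-1}\sim C|{\rm volume\ change}|^{(n-1)/n}$, and one absorbs the exponent by choosing $r$ comparable to $|{\rm volume\ change}|^{1/n}$ — this is exactly where the \emph{linear} bound $\Lambda\big||A|-d\big|$ comes from, provided the volume discrepancy is small. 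The $G$-term is estimated using (G1), since $\int_{B_r(x_0)}(\mu^2+|\nabla v|^2)^{p/2}\,dx$ is small when $r$ is small (for $v=u_h$ one uses the equiintegrability of $|\nabla u_h|^p$, which follows from the uniform $W^{1,p}$ bound after passing to a further subsequence, or one simply works near a Lebesgue point of $|\nabla u|^p$).

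Now apply the lemma with $(v,A)=(u_h,E_h)$ for $h$ large (so $\big||E_h|-d\big|$ is small) to obtain $\widetilde E_h$ with $|\widetilde E_h|=d$. Then $(u_h,\widetilde E_h)$ is admissible for $(P)$, hence $\mathcal I(u,E)\le\mathcal I(u_h,\widetilde E_h)$, and
\begin{equation*}
\mathcal I(u_h,\widetilde E_h)\le \mathcal I(u_h,E_h)+2\Lambda\big||E_h|-d\big|
=\mathcal I_{\lambda_h}(u_h,E_h)+(2\Lambda-\lambda_h)\big||E_h|-d\big|.
\end{equation*}
Since $(u_h,E_h)$ minimizes $\mathcal I_{\lambda_h}$ and $(u,E)$ is an admissible competitor for it (with $|E|=d$, so the penalization vanishes), $\mathcal I_{\lambda_h}(u_h,E_h)\le\mathcal I(u,E)$. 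Combining, $\mathcal I(u,E)\le\mathcal I(u,E)+(2\Lambda-\lambda_h)\big||E_h|-d\big|$, i.e. $(\lambda_h-2\Lambda)\big||E_h|-d\big|\le0$; for $h$ large $\lambda_h>2\Lambda$, forcing $|E_h|=d$, a contradiction. This proves the first assertion with $\lambda_0:=2\Lambda$. Finally, once $|E|=d$ is known, the minimality of $(u,E)$ for $(P)$ follows because any admissible competitor for $(P)$ has vanishing penalization term, so $\mathcal I_\lambda$ and $\mathcal I$ agree on the admissible class. The converse is immediate: if $(u,E)$ solves $(P)$ and $(v,A)$ is arbitrary with $v=u_0$ on $\partial\Om$, apply the volume-adjustment lemma to $(v,A)$ to get $\widetilde A$ with $|\widetilde A|=d$, whence $\mathcal I(u,E)\le\mathcal I(v,\widetilde A)\le\mathcal I(v,A)+2\Lambda\big||A|-d\big|\le\mathcal I_\lambda(v,A)$ for $\lambda\ge\lambda_0=2\Lambda$, while $\mathcal I_\lambda(u,E)=\mathcal I(u,E)$; if $\big||A|-d\big|$ is not small the inequality $\mathcal I(u,E)\le\mathcal I_\lambda(v,A)$ is trivial after enlarging $\lambda_0$ using the a priori bound $\mathcal I(u,E)\le C_0$.

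The main obstacle is the volume-adjustment lemma, specifically obtaining a \emph{linear} (not merely H\"older) dependence on $\big||A|-d\big|$ uniformly in the competitor; this requires care in choosing the radius $r$ and in invoking the relative isoperimetric inequality, and it is the reason the statement only produces a threshold $\lambda_0$ rather than working for all $\lambda>0$. A secondary technical point is handling the $G$-term uniformly along the sequence $u_h$, which needs the higher-integrability / equiintegrability of $|\nabla u_h|^p$ — available here from the uniform energy bound, but worth stating explicitly.
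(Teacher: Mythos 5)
The overall contradiction strategy (take $\lambda_h\to\infty$, minimizers $(u_h,E_h)$ with $|E_h|\neq d$, pass to a limit, build a competitor that corrects the volume) is the same as the paper's, and the converse and the final formal bookkeeping are straightforward once the competitor is controlled. The genuine gap is in your volume-adjustment lemma, and it is twofold.

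First, the perimeter bound. Adding or removing a spherical shell of volume $\delta$ inside a ball of radius $r\sim\delta^{1/n}$ changes the perimeter by $\sim r^{n-1}\sim\delta^{(n-1)/n}$, which for small $\delta$ is \emph{much larger} than $\Lambda\delta$; ``absorbing the exponent by choosing $r\sim\delta^{1/n}$'' does not produce a linear bound. The linear bound is obtained instead by dilating radially inside a ball of \emph{fixed} radius $r$ (chosen once and for all so that the limit set $E$ has density bounded away from $0$ and $1$ inside it) with a small dilation parameter $\sigma$: both the volume change and the perimeter change are then $O(\sigma)$ (see \eqref{unotre}--\eqref{unodieci} in the paper), so their ratio is controlled by $P(E_h,B_r)/r^n$, which is bounded by the a priori energy bound. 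This requires a lower density bound on $A$ in the ball — true for $A=E_h$ with $h$ large by $L^1$-convergence to $E$, but not for arbitrary $(v,A)$ with $\big||A|-d\big|$ small, so the lemma as you state it is false in that generality.

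Second, and more seriously, the $G$-term. If you correct the volume of $A$ without touching $v$, then
\begin{equation*}
\Big|\int_\Om\big(\chi_{\widetilde A}-\chi_A\big)G(\nabla v)\,dx\Big|\le\int_{\widetilde A\triangle A}G(\nabla v)\,dx,
\end{equation*}
which is an integral of $|\nabla v|^p$ over a set of measure $\lesssim\delta$. Equiintegrability gives $o(1)$ as $\delta\to0$, but \emph{not} $O(\delta)$ — there is no uniform linear modulus of absolute continuity without an $L^\infty$ bound on $\nabla v$, which is precisely the kind of regularity we are trying to prove. With only an $o(1)$ bound the contradiction collapses: you get $(\lambda_h-C)\delta_h\le\varepsilon_h$ with both sides possibly $O(1)$. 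The paper avoids this issue entirely by pushing the function forward along with the set: $\widetilde E_h=\Phi(E_h)$ and $\tilde u_h=u_h\circ\Phi^{-1}$. After the change of variables $y=\Phi(x)$, $\chi_{\widetilde E_h}(\Phi(x))=\chi_{E_h}(x)$, so no symmetric-difference term appears; the change in the bulk energy comes only from the Jacobian $J\Phi$ and the factor $\nabla\Phi^{-1}$, both of which are controlled pointwise in terms of $\sigma$ using (F1),(G1). This is the key idea your proposal is missing.

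Two minor points: you use $\mathcal I(u,E)\le\mathcal I(u_h,\widetilde E_h)$ without first establishing that the limit $(u,E)$ actually minimizes $(P)$ (this does follow, but you skip it; in fact the cleaner route, which the paper takes, is to compare $(u_h,E_h)$ directly with $(\tilde u_h,\widetilde E_h)$ and contradict the $\mathcal I_{\lambda_h}$-minimality of $(u_h,E_h)$, bypassing $(u,E)$ entirely). For the converse direction the paper argues more simply: once the first part guarantees every minimizer of $\mathcal I_\lambda$ has volume $d$, one picks any minimizer $(u_\lambda,E_\lambda)$ of $\mathcal I_\lambda$ and sandwiches $\mathcal I(u,E)\le\mathcal I(u_\lambda,E_\lambda)=\mathcal I_\lambda(u_\lambda,E_\lambda)\le\mathcal I_\lambda(u,E)=\mathcal I(u,E)$, with no further use of the volume-adjustment construction.
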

\par
 Theorem \ref{uno} is a straightforward  modification of  a result due to Esposito and Fusco (see \cite[Theorem1]{EF}). Since several modifications are needed, we present its proof in Section 1 for the reader's convenience. Similar arguments have been used in Fonseca, Fusco, Leoni and Millot (\cite{FFLM}) (see also Alt and Caffarelli \cite{AC}).

\noindent From the point of view of  regularity, the extra term $\lambda\big||A|-d\big|$ is a higher order, negligible perturbation, in the sense that if $x_0\in\partial^*E\cap\partial\Om$ then $|E\cap B_\ro(x_0)|$ decays as $\ro^n$ as $\ro\to0^+$ while the leading term
$\int_{B_\ro(x_0)}\!\left(F(\nabla u)+\chi_{_{E}}G(\nabla u)\right)\,dx+P(E,B_\ro(x_0))$ decays as $\ro^{n-1}$.

 The proof of Theorem \ref{duedue} is based on a decay estimate for the gradient of the minimizer $u$, obtained by  blowing-up the minimizer $u$ in small balls. We  establish that the minimizers of the rescaled  problems  converge to a H\"older continuous  function $v$, and  we show that $u$ and
$v$ are "close enough"  (with respect to the norm in the Sobolev space $W^{1,p}$) in order to ensure that $u$ inherits the
regularity estimates of $v$.

\noindent Theorems \ref{due}  and  \ref{duetre} are obtained by a comparison argument between the minimizer of (P) and the minimizer of a suitable convex scalar functional with $p$-growth, for which regularity results are well known. Also here,
we show that the  two minimizers  are "close" enough  to share the same good
regularity properties.
We remark that in this comparison argument we need that $u$ is a real valued function. In  fact, in the vectorial setting (see \cite{sy}) minimizers of regular variational functionals may have singularities and only partial regularity results are known (see for example \cite{af1,cp,fh}).

 This paper is organized as follows: In  Section 2 we fix the notation and collect standard preliminary results. The proof of Theorem \ref{uno} is given in Section 3, since the result is needed in the proofs of the other theorems. The proofs of Theorems \ref{duedue}, \ref{due} and \ref{duetre} are presented in Sections 4, 5 and 6, respectively.

\bigskip

\section{Notations and Preliminary Results}

\bigskip

In this paper we follow  usual convention and denote by $c$ a
general constant that may vary from expression to expression, even within the
same line of estimates.
Relevant dependencies on parameters and special constants will be suitably emphasized using
parentheses or subscripts. The norm we use
in $\mathbb{R}^n$  is the standard Euclidean norm, and it will be denoted
by $| \cdot |$. In particular, for vectors $\xi$, $\eta \in \mathbb{R}^n$ we write
$\langle \xi , \eta \rangle $ for the  inner product
of $\xi$ and $\eta$, and $| \xi | := \langle \xi , \xi \rangle^{\frac{1}{2}}$ is the
corresponding Euclidean norm.
When $a , b \in \mathbb{R}^n$ we write $a \otimes b $ for the tensor product defined
as the matrix that has the element $a_{r}b_{s}$ in its r-th row and s-th column. Observe that
$(a \otimes b)x = (b \cdot x)a$ for $x \in \mathbb{R}^n$, and $| a \otimes b | = |a| |b|$.
When $F \colon \mathbb{R}^n \to \R$ is $C^1$, we write
$$
D_\xi F(\xi )[\eta ] := \frac{\rm d}{{\rm d}t}\Big|_{t=0} F(\xi +t\eta )
$$
if $\xi$, $\eta \in \mathbb{R}^n$.
It is convenient to express the convexity and growth conditions of the
integrands in terms of an auxiliary function defined for all $\xi \in \mathbb{R}^n$ as
\begin{equation}\label{aux2}
V( \xi ) = V_{p,\mu}(\xi ) := \Bigl( \mu^{2}+| \xi |^{2} \Bigr)^{\frac{p-2}{4}}\xi ,
\end{equation}
where $\mu \geq 0$ and $p \geq 1$.
We recall the following
lemmas.

\begin{Lem} \label{V}
Let $1<p<\infty$ and $0\leq \mu \leq 1$. There exists a constant $c=c(n,N,p)>0$
such that
$$
c^{-1}\Bigl( \mu^{2}+| \xi |^2+| \eta |^2 \Bigr)^{\frac{p-2}{2}}\leq
\frac{|V_{p,\mu}(\xi )-V_{p,\mu}(\eta )|^2}{|\xi -\eta |^2} \leq
c\Bigl( \mu^{2}+|\xi |^2+|\eta |^2 \Bigr)^{\frac{p-2}{2}}
$$
for all $\xi$, $\eta \in \mathbb{R}^n$.
\end{Lem}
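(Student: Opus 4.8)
This is a classical estimate for the function $V_{p,\mu}$; let me indicate how I would prove it. By the symmetry of the statement in $\xi$ and $\eta$ we may assume $|\eta|\le|\xi|$, so that $\mu^2+|\xi|^2\le\mu^2+|\xi|^2+|\eta|^2\le 2(\mu^2+|\xi|^2)$ and it suffices to compare $|V(\xi)-V(\eta)|^2$ with $(\mu^2+|\xi|^2)^{\frac{p-2}{2}}|\xi-\eta|^2$. Setting $\xi_t:=\eta+t(\xi-\eta)$, the fundamental theorem of calculus gives
\[
V(\xi)-V(\eta)=\int_0^1 D_\xi V(\xi_t)[\xi-\eta]\,dt,
\]
the integral being absolutely convergent even when $\mu=0$ (along a segment through the origin $|D_\xi V(\xi_t)|$ blows up only like $|t-t_0|^{\frac{p-2}{2}}$, which is integrable since $p>1$).

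A direct computation gives
\[
D_\xi V(\zeta)=(\mu^2+|\zeta|^2)^{\frac{p-2}{4}}\Big(I+\tfrac{p-2}{2}\,\tfrac{\zeta\otimes\zeta}{\mu^2+|\zeta|^2}\Big),
\]
a symmetric matrix whose eigenvalues are $(\mu^2+|\zeta|^2)^{\frac{p-2}{4}}$ and $\frac{\mu^2+\frac p2|\zeta|^2}{\mu^2+|\zeta|^2}\,(\mu^2+|\zeta|^2)^{\frac{p-2}{4}}$; the factor $\frac{\mu^2+\frac p2|\zeta|^2}{\mu^2+|\zeta|^2}$ is a convex combination of $1$ and $\tfrac p2$, hence lies in $[\min\{1,\tfrac p2\},\max\{1,\tfrac p2\}]$, which for $p>1$ gives two-sided bounds on $D_\xi V(\zeta)$ by positive multiples of $(\mu^2+|\zeta|^2)^{\frac{p-2}{4}}$. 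Pairing the identity above with $\xi-\eta$ and using Cauchy--Schwarz (lower bound) and the triangle inequality (upper bound), we reduce the lemma to the scalar estimate
\[
c(p)^{-1}(\mu^2+|\xi|^2)^{\frac{p-2}{4}}\le\int_0^1(\mu^2+|\xi_t|^2)^{\frac{p-2}{4}}\,dt\le c(p)\,(\mu^2+|\xi|^2)^{\frac{p-2}{4}}.
\]

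To prove this I distinguish $p\ge2$ and $1<p<2$. Since $|\xi_t|\le|\xi|$, the monotonicity of $s\mapsto s^{\frac{p-2}{4}}$ yields one of the two inequalities for free in each case. For the other one I split on the size of $|\xi-\eta|$: if $|\xi-\eta|\le\tfrac12|\xi|$ then $|\xi_t|\ge\tfrac12|\xi|$ for all $t\in[0,1]$, so $\mu^2+|\xi_t|^2$ is comparable to $\mu^2+|\xi|^2$ uniformly in $t$ and the estimate is immediate (for $p\ge2$ it even suffices that $|\xi_t|\ge\tfrac12|\xi|$ on $[\tfrac34,1]$, which always holds). If $|\xi-\eta|>\tfrac12|\xi|$ (only relevant for the upper bound, i.e.\ $1<p<2$), I complete the square: $\mu^2+|\xi_t|^2=|\xi-\eta|^2(t-t_0)^2+(\mu^2+a^2)$ with $a\ge0$; when $\mu\ge|\xi|$ the trivial bound $\int_0^1(\mu^2+|\xi_t|^2)^{\frac{p-2}{4}}dt\le\mu^{\frac{p-2}{2}}$ already suffices, and when $\mu<|\xi|$ one has $|\xi-\eta|\gtrsim(\mu^2+|\xi|^2)^{1/2}$, so discarding the nonnegative constant $\mu^2+a^2$ and using $\int_0^1|t-t_0|^{\frac{p-2}{2}}dt\le c(p)<\infty$ (because $\tfrac{p-2}{2}>-1$) gives $\int_0^1(\mu^2+|\xi_t|^2)^{\frac{p-2}{4}}dt\le c(p)|\xi-\eta|^{\frac{p-2}{2}}\le c(p)(\mu^2+|\xi|^2)^{\frac{p-2}{4}}$. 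All constants depend only on $p$; the extra parameters $n,N$ in the statement, and the restriction $\mu\le1$, play no role here.

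The main obstacle is precisely the scalar integral estimate in the regime $1<p<2$ with $\mu=0$ and $\xi,\eta$ nearly antipodal: then $|\xi_t|$ vanishes at an interior time $t_0$ and the integrand is unbounded, so one cannot simply bound $\int_0^1(\mu^2+|\xi_t|^2)^{\frac{p-2}{4}}dt$ by its maximum -- the completing-the-square reduction to the explicit integral $\int_0^1|t-t_0|^{\frac{p-2}{2}}dt$, together with its finiteness for $p>1$, is what resolves it. Everything else is bookkeeping.
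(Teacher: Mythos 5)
Your proof is correct. The paper itself gives no proof of this lemma, only a citation to Giusti's \emph{Direct methods in the calculus of variations} (Lemma~8.3), so there is no in-paper argument to compare against; what you provide is a self-contained derivation along the standard route: write $V(\xi)-V(\eta)$ via the fundamental theorem of calculus, compute $D_\xi V(\zeta)=(\mu^2+|\zeta|^2)^{\frac{p-2}{4}}\bigl(I+\tfrac{p-2}{2}\tfrac{\zeta\otimes\zeta}{\mu^2+|\zeta|^2}\bigr)$, bound its spectrum two-sidedly by constants times $(\mu^2+|\zeta|^2)^{\frac{p-2}{4}}$, and reduce to the one-dimensional estimate for $\int_0^1(\mu^2+|\xi_t|^2)^{\frac{p-2}{4}}\,dt$. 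The eigenvalue computation, the Cauchy--Schwarz/triangle pairing to pass between vector and scalar estimates, and the case analysis in the scalar estimate (both the ``$|\xi-\eta|\le\frac12|\xi|$ vs.\ $>\frac12|\xi|$'' split for $1<p<2$ and the observation that for $p\ge2$ the segment $[\frac34,1]$ suffices) are all sound; the completion of the square $|\xi_t|^2=|\xi-\eta|^2(t-t_0)^2+a^2$ and the finiteness of $\int_0^1|t-t_0|^{\frac{p-2}{2}}\,dt$ uniformly in $t_0$ (valid because $\frac{p-2}{2}>-\frac12>-1$) correctly handle the singular case $\mu=0$, $\xi,\eta$ nearly antipodal. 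You are also right that the dependence on $n,N$ in the stated constant is spurious here: everything depends only on $p$.

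One remark worth making explicit: the scalar integral bound you prove is exactly the paper's Lemma~\ref{V1} applied with the exponent $p$ replaced by $q=\tfrac{p+2}{2}$ (so that $\tfrac{q-2}{2}=\tfrac{p-2}{4}$), and $q>1$ whenever $p>1$. So after the eigenvalue computation and the reduction step, you could have invoked Lemma~\ref{V1} directly instead of reproving the one-dimensional estimate by hand. Either way, the argument is complete.
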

\noindent For  the proof we refer to \cite[Lemma 8.3]{gi}.
The next lemma  can be found in \cite[Lemma 2.1]{GM} and \cite[Lemma 2.1]{AF} for $p\ge 2$ and
$1<p<2$, respectively.
\begin{Lem}\label{V1}
 For $1 < p < \infty$ and all $\xi$, $\eta \in \mathbb{R}^n$ one has
$$\frac{1}{c}\le \frac{\int^1
_0 (\mu^2 + |\xi + t \eta)|^2)^{\frac{p-2}{2}}
dt}{(\mu^2 + |\xi|^2 + |\eta|^2)
^{\frac{p-2}{2}}}\le c,
$$
where c depends only on p.
\end{Lem}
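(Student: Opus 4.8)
The plan is to prove the two‑sided bound directly, splitting into the ranges $p\ge 2$ and $1<p<2$; in each range one of the two inequalities is elementary and the other carries all the work. Write $I:=\int_0^1(\mu^2+|\xi+t\eta|^2)^{\frac{p-2}{2}}\,dt$ and $J:=(\mu^2+|\xi|^2+|\eta|^2)^{\frac{p-2}{2}}$ (we may clearly assume $\mu+|\xi|+|\eta|>0$). The only elementary ingredient is that $\mu^2+|\xi+t\eta|^2\le 2(\mu^2+|\xi|^2+|\eta|^2)$ for $t\in[0,1]$: combined with the monotonicity of $s\mapsto s^{\frac{p-2}{2}}$ this already gives $I\le 2^{\frac{p-2}{2}}J$ when $p\ge 2$ and $I\ge 2^{\frac{p-2}{2}}J$ when $1<p<2$. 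So it remains to prove the lower bound $I\ge c^{-1}J$ for $p\ge 2$ and the upper bound $I\le cJ$ for $1<p<2$, keeping track in both cases of the fact that the constants depend only on $p$.

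For the lower bound when $p\ge 2$, since $s\mapsto s^{\frac{p-2}{2}}$ is nondecreasing one has both $I\ge\int_0^1|\xi+t\eta|^{p-2}\,dt$ and $I\ge\mu^{p-2}$. I would first establish the dimension‑free inequality $\int_0^1|\xi+t\eta|^{p-2}\,dt\ge 4^{1-p}\max\{|\xi|,|\eta|\}^{p-2}$. This is a statement about the segment $\{\xi+t\eta:t\in[0,1]\}$, which joins $\xi$ to $\xi+\eta$ and has length $|\eta|$: since $|\xi|+|\xi+\eta|\ge|\eta|$, at least one of its two endpoints has norm $\ge\tfrac12\max\{|\xi|,|\eta|\}$, and by the triangle inequality $|\xi+t\eta|\ge\tfrac14\max\{|\xi|,|\eta|\}$ on a subinterval of $[0,1]$ of length $\ge\tfrac14$ adjacent to that endpoint; integrating the monotone integrand over this subinterval proves the claim. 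Then $2I\ge\mu^{p-2}+4^{1-p}\max\{|\xi|,|\eta|\}^{p-2}\ge 4^{1-p}\max\{\mu,|\xi|,|\eta|\}^{p-2}$, and since $\mu^2+|\xi|^2+|\eta|^2\le 3\max\{\mu,|\xi|,|\eta|\}^2$ we obtain $I\ge c(p)^{-1}J$.

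The upper bound when $1<p<2$ is the heart of the matter. Assume $\eta\neq 0$ (otherwise $I=J$). Completing the square, $\mu^2+|\xi+t\eta|^2=|\eta|^2(t-t_0)^2+r^2$ with $t_0:=-\langle\xi,\eta\rangle/|\eta|^2$ and $r^2:=\mu^2+|\xi|^2-t_0^2|\eta|^2=\mu^2+\mathrm{dist}(0,\ell)^2\ge 0$, $\ell$ being the line through $\xi$ and $\xi+\eta$; one also records the identity $\mu^2+|\xi|^2+|\eta|^2=r^2+(t_0^2+1)|\eta|^2$. Substituting $u=t-t_0$ gives $I=\int_{-t_0}^{1-t_0}(|\eta|^2u^2+r^2)^{\frac{p-2}{2}}\,du$, an integral over an interval of length $1$ whose distance from $0$ is $d_0:=\mathrm{dist}(t_0,[0,1])$. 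If $d_0\ge 1$ (that is, $t_0\notin(-1,2)$), then $|\eta|^2u^2+r^2\ge|\eta|^2d_0^2+r^2$ on the whole interval, so $I\le(|\eta|^2d_0^2+r^2)^{\frac{p-2}{2}}$; since for such $t_0$ one has $t_0^2+1\le 5\,d_0^2$, it follows that $\mu^2+|\xi|^2+|\eta|^2\le 5(r^2+d_0^2|\eta|^2)$, hence $J\ge 5^{\frac{p-2}{2}}I$. If $d_0<1$ (that is, $-1<t_0<2$), the interval of integration is contained in $[-2,2]$, so $I\le 2\int_0^2(|\eta|^2u^2+r^2)^{\frac{p-2}{2}}\,du$; here I would split the integral at $u=r/|\eta|$, bounding the integrand by $r^{p-2}$ on $[0,r/|\eta|]$ and by $|\eta|^{p-2}u^{p-2}$ on $[r/|\eta|,2]$ — the latter being integrable precisely because $p-2>-1$ — to get $\int_0^2(|\eta|^2u^2+r^2)^{\frac{p-2}{2}}\,du\le c(p)(r^2+|\eta|^2)^{\frac{p-2}{2}}$, and since $-1<t_0<2$ forces $\mu^2+|\xi|^2+|\eta|^2=r^2+(t_0^2+1)|\eta|^2\le 5(r^2+|\eta|^2)$, we again conclude $I\le c(p)J$.

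I expect the main obstacle to be exactly this last case ($1<p<2$, $d_0<1$): one must show that the genuinely singular — but, thanks to $p>1$, still integrable — integrand cannot make $I$ blow up faster than $J$ degenerates, and one must carry out the bookkeeping of the geometric subcases while keeping all constants dependent on $p$ only, with no dimensional dependence. A softer argument yields the estimate with a constant also depending on $n$: for $\mu=0$ both $I$ and $J$ are homogeneous of degree $p-2$ in $(\xi,\eta)$ and $I/J$ is continuous and strictly positive on the unit sphere of $\R^n\times\R^n$, hence bounded above and below by compactness, while the case $\mu>0$ reduces to $\mu=0$ in $\R^{n+1}$ via $\xi\mapsto(\xi,\mu)$, $\eta\mapsto(\eta,0)$; but obtaining the dimension‑free constant asserted in the statement requires the explicit estimates above.
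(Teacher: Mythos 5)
Your proof is correct. A point worth noting at the outset: the paper itself does not prove this lemma --- it simply cites Giaquinta--Modica and Acerbi--Fusco for the cases $p\ge 2$ and $1<p<2$ respectively --- so there is no ``paper's proof'' to compare against in the strict sense. What you have produced is a self-contained, dimension-free argument in the same spirit as those classical references. The structure is sound: the easy direction in each range ($I\le 2^{\frac{p-2}{2}}J$ for $p\ge 2$, $I\ge 2^{\frac{p-2}{2}}J$ for $1<p<2$) follows from $\mu^2+|\xi+t\eta|^2\le 2(\mu^2+|\xi|^2+|\eta|^2)$; the lower bound for $p\ge 2$ follows from the geometric observation that the segment $\{\xi+t\eta\}$ has an endpoint of norm $\ge\tfrac12\max\{|\xi|,|\eta|\}$ and hence stays $\ge\tfrac14\max\{|\xi|,|\eta|\}$ on a quarter of the parameter interval, giving $\int_0^1|\xi+t\eta|^{p-2}\,dt\ge 4^{1-p}\max\{|\xi|,|\eta|\}^{p-2}$, which combined with $I\ge\mu^{p-2}$ yields the claim; and the upper bound for $1<p<2$ follows from completing the square, reducing to $I=\int_{-t_0}^{1-t_0}(|\eta|^2u^2+r^2)^{\frac{p-2}{2}}\,du$ with the identity $\mu^2+|\xi|^2+|\eta|^2=r^2+(t_0^2+1)|\eta|^2$, and then the case split on $d_0=\mathrm{dist}(t_0,[0,1])$ against $1$, using integrability of $u^{p-2}$ near $0$ (which is exactly where $p>1$ enters). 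All the intermediate inequalities ($t_0^2+1\le 5d_0^2$ when $d_0\ge 1$; $t_0^2+1<5$ when $-1<t_0<2$; the split of $\int_0^2$ at $u=r/|\eta|$) check out, and every constant is explicitly a function of $p$ alone. Your closing remark about the softer compactness route is accurate as stated, though one could push it slightly further: since $I$ and $J$ depend only on $\mu$, $|\xi|$, $|\eta|$, $\langle\xi,\eta\rangle$, the general-$n$ case reduces to $n=2$ and $\mu=0$ to $\mu>0$ in one extra dimension, so even the compactness argument can be made dimension-free; but the explicit bounds you give are cleaner and quantitative, so this is a minor point.
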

\noindent It is well--known that for convex $C^1$ integrands, the assumptions (F1) and (G1) yield the upper bounds
\begin{equation}\label{(H4)}
|D_\xi F(\xi)|\le c_1(\mu^2+|\xi|^2)^{\frac{p-1}{2}}
\qquad\qquad
|D_\xi G(\xi)|\le  c_2(\mu^2+|\xi|^2)^{\frac{p-1}{2}}
\end{equation}
for all $\xi \in \R^n$, where we can use $c_1: =2^{p}L$ and $c_2: =2^{p}\beta L$ (see \cite{gi}).

\noindent Also, if $F$ and $G$   satisfy (F2) and (G2), respectively, then the following strong $p$-monotonicity conditions hold:
\begin{eqnarray}\label{deriv}
& &\langle D_\xi F(\xi)-D_\xi F(\eta),\xi-\eta\rangle\ge c(p)\ell |V(\xi)-V(\eta)|^2\cr\cr & &\langle D_\xi G(\xi)-D_\xi G(\eta),\xi-\eta\rangle\ge c(p)\alpha\ell |V(\xi)-V(\eta)|^2
\end{eqnarray}
for all $\xi$, $\eta \in \R^n $ and some $c(p)>0$. In fact, (F2) and (G2) are equivalent to the convexity of the functions
$$\xi\to \widetilde F(\xi):=F(\xi)-\ell(\mu^2+| \xi|^2 )^{\frac{p}{2}}$$
and
$$\xi\to \widetilde G(\xi):=G(\xi)-\alpha\ell(\mu^2+| \xi|^2 )^{\frac{p}{2}},$$
respectively (see for example \cite{gi}, p.164). Hence, the convexity of $\widetilde F$ implies
$$F(\xi)-\ell(\mu^2+| \xi|^2 )^{\frac{p}{2}}\ge F(\eta)-\ell(\mu^2+| \eta|^2 )^{\frac{p}{2}}+\langle D_\xi F(\eta),\xi-\eta\rangle-\ell p\langle (\mu^2+| \eta|^2 )^{\frac{p}{2}-1}\eta,\xi-\eta\rangle$$
and
$$F(\eta)-\ell(\mu^2+| \eta|^2 )^{\frac{p}{2}}\ge F(\xi)-\ell(\mu^2+| \xi|^2 )^{\frac{p}{2}}+\langle D_\xi F(\xi),\eta-\xi\rangle-\ell p\langle (\mu^2+| \xi|^2 )^{\frac{p}{2}-1}\xi,\eta-\xi\rangle.$$
Summing previous inequalities and using Lemmas \ref{V1} and \ref{V}, we obtain
\begin{eqnarray*}\langle D_\xi F(\xi)-D_\xi F(\eta),\xi-\eta\rangle &\ge& \ell p\langle (\mu^2+| \xi|^2 )^{\frac{p}{2}-1}\xi,\xi-\eta\rangle-\ell p\langle (\mu^2+| \eta|^2 )^{\frac{p}{2}-1}\eta,\xi-\eta\rangle\cr\cr
&\ge& \ell p\langle (\mu^2+| \xi|^2 )^{\frac{p}{2}-1}\xi-(\mu^2+| \eta|^2 )^{\frac{p}{2}-1}\eta,\xi-\eta\rangle\cr\cr
&\ge &c(p)\ell \int_0^1(\mu^2+| \xi+t(\eta-\xi)|^2 )^{\frac{p}{2}-1}\,dt|\xi-\eta|^2\cr\cr
&\ge&c(p)\ell |V(\xi)-V(\eta)|^2,
\end{eqnarray*}
i.e, the first inequality in \eqref{deriv}. The second inequality in \eqref{deriv} can be derived arguing  similarly.

\noindent Further, if $F$ and $G$ are $C^2$, then
(F2) and (G2) are equivalent to the following standard strong $p$--ellipticity conditions
$$
\langle D^2F(\xi)\eta,\eta \rangle\ge c_3(\mu^2+|\xi|^2)^{\frac{p-2}{2}}|\eta |^{2},
\qquad\qquad
\langle D^2G(\xi)\eta,\eta \rangle\ge c_4(\mu^2+|\xi|^2)^{\frac{p-2}{2}}|\eta |^{2}
$$
for all $\xi$, $\eta \in \R^n $, where $c_i$ are positive constants of form $c_3=c(p)\ell$ and $c_4=c(p)\alpha\ell$, respectively.

 The next lemma establishes that the uniform strong $p$--convexity assumptions  (F2) and (G2) yield  growth conditions from below for the
functions $F$ and $G$.

\begin{Lem}\label{grobelow}
Suppose that $H:\mathbb{R}^n\to [0,+\infty)$ is a $C^1$ function such that
\begin{equation}\label{pgro}0\le H(\xi)\le \widetilde L (\mu^2+| \xi|^2 )^{\frac{p}{2}} \end{equation}
for all $\xi\in\mathbb{R}^n$, where $p>1$, $0\le \mu\le 1$, $ \widetilde L>0$. Assume, in addition, that
\begin{equation}\label{pconv}\int_Q H(\xi+\nabla \varphi)\, dx \ge \int_Q H(\xi) +\widetilde \ell (\mu^2+| \xi|^2 +|\nabla\varphi|^2)^{\frac{p-2}{2}}|\nabla\varphi|^2\,dx\end{equation}
for all $\xi\in\mathbb{R}^n$, $\varphi\in C_0^1(Q)$, $Q\subset \mathbb{R}^n$ and for some positive constant $\widetilde \ell$.
Then there exists a positive constant $c(p,\widetilde L,\widetilde \ell, \mu)$ such that
\begin{equation}
H(\xi)\ge \frac{\widetilde\ell}{2}(\mu^2+| \xi|^2 )^{\frac{p}{2}}-c(p,\widetilde L,\widetilde \ell, \mu)\quad {for\,\,all\,\,}\xi\in \mathbb{R}^n.
\end{equation}
\end{Lem}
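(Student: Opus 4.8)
The plan is to deduce the bound from a single instance of the first-order convexity inequality, taken with base point at the origin. Set
$$
\widetilde H(\xi):=H(\xi)-\widetilde\ell\,(\mu^2+| \xi|^2 )^{\frac{p}{2}},\qquad \xi\in\mathbb{R}^n .
$$
First I would note, exactly as recalled above for $F$ and $G$ (the discussion around \eqref{deriv}), that hypothesis \eqref{pconv} is equivalent to the convexity of $\widetilde H$; moreover $\widetilde H$ is of class $C^1$ since $p>1$. Because $D\big[(\mu^2+|\cdot|^2)^{p/2}\big](0)=0$, the first-order characterization of convexity for $\widetilde H$, applied with reference point $\eta=0$, gives for every $\xi\in\mathbb{R}^n$
$$
H(\xi)-\widetilde\ell\,(\mu^2+|\xi|^2)^{\frac{p}{2}}\;\ge\; H(0)-\widetilde\ell\,\mu^{p}+\langle D H(0),\xi\rangle .
$$

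Next I would dispose of the lower-order terms on the right-hand side. Since $H\ge 0$ we drop $H(0)$, and since $0\le\mu\le 1$ we have $\widetilde\ell\,\mu^{p}\le\widetilde\ell$. Because $H=\widetilde H+\widetilde\ell\,(\mu^2+|\cdot|^2)^{p/2}$ is a sum of convex functions, $H$ is itself convex; being $C^1$ and obeying the growth bound \eqref{pgro}, it satisfies an estimate of the type \eqref{(H4)}, namely $|D H(0)|\le 2^{p}\widetilde L\,\mu^{p-1}\le 2^{p}\widetilde L$. Hence $\langle D H(0),\xi\rangle\ge -2^{p}\widetilde L\,|\xi|$, and by Young's inequality (here $p>1$ is essential) there is a constant $c=c(p,\widetilde L,\widetilde\ell)$ with
$$
2^{p}\widetilde L\,|\xi|\;\le\;\frac{\widetilde\ell}{2}\,|\xi|^{p}+c\;\le\;\frac{\widetilde\ell}{2}\,(\mu^2+|\xi|^2)^{\frac{p}{2}}+c .
$$
Combining the three displays yields $H(\xi)\ge \widetilde\ell\,(\mu^2+|\xi|^2)^{p/2}-\frac{\widetilde\ell}{2}(\mu^2+|\xi|^2)^{p/2}-\widetilde\ell-c$, that is, the asserted inequality with $c(p,\widetilde L,\widetilde\ell,\mu)=\widetilde\ell+c$.

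The only genuinely non-trivial point is the opening step: turning the integral hypothesis \eqref{pconv} into the pointwise convexity of $\widetilde H$ --- equivalently, into the monotonicity inequality \eqref{deriv} for $H$ with constant $c(p)\widetilde\ell$. This is the same equivalence already used in the preliminaries for $F$ and $G$, and it is what allows us to freeze the inequality at the single point $\eta=0$; once that is done the argument is elementary, the key gain being that at $\eta=0$ the gradient of the correction $\widetilde\ell(\mu^2+|\cdot|^2)^{p/2}$ vanishes, so the full coefficient $\widetilde\ell$ survives in front of $(\mu^2+|\xi|^2)^{p/2}$, half of it then sufficing to absorb the linear error. Integrating \eqref{deriv} along the segment $[0,\xi]$ would also work, but delivers only the non-sharp constant $c(p)\widetilde\ell/p$, which is why I would favour the direct subgradient argument.
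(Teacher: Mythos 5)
Your argument is correct and is essentially the same as the paper's: both proofs pass to the convex function $H-\widetilde\ell(\mu^2+|\cdot|^2)^{p/2}$, apply the first-order convexity inequality at the origin (exploiting that the correction term has vanishing gradient there), bound $|DH(0)|$ via the standard estimate \eqref{(H4)} for convex functions with $p$-growth, and absorb the linear term by Young's inequality. The only cosmetic difference is that you use $\mu\le1$ to simplify the constant slightly earlier, whereas the paper carries $\mu^p$ factors through to the end; the final inequality and dependence $c(p,\widetilde L,\widetilde\ell,\mu)$ are the same.
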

\begin{proof}
We use again the fact that assumption \eqref{pconv} is equivalent to the convexity of the function
$$\xi\to K(\xi):=H(\xi)-\widetilde \ell(\mu^2+| \xi|^2 )^{\frac{p}{2}}.$$
Hence
\begin{equation*}
K(\xi)\ge K(0)+\langle D_\xi K(0),\xi\rangle,
\end{equation*}
or, equivalently,
\begin{equation}\label{gro1}
H(\xi)\ge \widetilde \ell(\mu^2+| \xi|^2 )^{\frac{p}{2}}+H(0)-\widetilde\ell\mu^p+\langle D_\xi H(0),\xi\rangle
\end{equation}
for all $\xi\in \R^n $. By \eqref{pgro} and  \eqref{(H4)}, we have that
$$H(0)\ge 0\qquad\mathrm{and}\qquad |D_\xi H(0)|\le 2^p\widetilde L \mu^{p-1},$$
and by Young's inequality
\begin{eqnarray}\label{gro2}
\Big|\langle D_\xi H(0),\xi\rangle \Big|&=& \Big|\langle \big(\,\widetilde \ell\,\big)^{-\frac{1}{p}}D_\xi H(0),\big(\,\widetilde \ell\,\big)^{\frac{1}{p}}\xi\rangle \Big|\le c(\varepsilon)\big(\,\widetilde \ell\,\big)^{-\frac{1}{p-1}}|D_\xi H(0)|^{\frac{p}{p-1}}+\varepsilon \widetilde \ell|\xi|^p\cr\cr
&\le& c(\varepsilon)2^{\frac{p^2}{p-1}}\big(\,\widetilde \ell\,\big)^{-\frac{1}{p-1}}\widetilde L^{\frac{p}{p-1}} \mu^{p}+\varepsilon \widetilde \ell(\mu^2+|\xi|^2)^\frac{p}{2}.\end{eqnarray}
Inserting \eqref{gro2} in \eqref{gro1}, we get
$$H(\xi)\ge \widetilde \ell(\mu^2+| \xi|^2 )^{\frac{p}{2}}-c(\varepsilon)2^{\frac{p^2}{p-1}}\big(\,\widetilde \ell\,\big)^{-\frac{1}{p-1}}\widetilde L^{\frac{p}{p-1}} \mu^{p}-\varepsilon \widetilde \ell(\mu^2+|\xi|^2)^\frac{p}{2}-\widetilde\ell\mu^p$$
and, choosing $\varepsilon=\frac{1}{2}$, we conclude that
$$H(\xi)\ge \frac{\widetilde \ell}{2}(\mu^2+| \xi|^2 )^{\frac{p}{2}}-c_p\left(\frac{\widetilde L^p}{\widetilde \ell}\right)^{\frac{1}{p-1}} \mu^{p}-\widetilde\ell\mu^p.$$
\end{proof}
\noindent As already mentioned in the Introduction, we will compare the minimizer $u$ of the problem (P) with the minimizer of a suitable regular convex variational integral. In order to take advantage of the comparison argument, we will  need the following regularity result (see \cite[Theorem 2.2]{ff})
\begin{Thm}\label{fofu}
Let $H:\mathbb{R}^n\to [0,+\infty)$ be a continuous function such that
$$0\le H(\xi)\le \widetilde L (\mu^2+| \xi|^2 )^{\frac{p}{2}} $$
for all $\xi\in\mathbb{R}^n$, where $p>1$, $0\le \mu\le 1$, $ \widetilde L>0$.
Suppose, in addition, that
$$\int_Q H(\xi+\nabla \varphi)\, dx \ge \int_Q H(\xi) +\widetilde \ell (\mu^2+| \xi|^2 +|\nabla\varphi|^2)^{\frac{p-2}{2}}|\nabla\varphi|^2\,dx$$
for all $\xi\in\mathbb{R}^n$, $\varphi\in C_0^1(Q)$, $Q\subset \mathbb{R}^n$ and for some positive constant $\widetilde \ell$. If $v\in W^{1,p}(\Omega)$ is a local minimizer of the functional $$\mathcal{H}(w,\Omega):=\int_\Omega H(\nabla w)\, dx,$$
i.e.,
$$\mathcal{H}(v,B_r(x_0))=\min\Big\{\mathcal{H}(w,B_r(x_0))\,\,:\,\, w\in v+W^{1,p}_0(B_r(x_0))\Big\}\qquad {for\,\,all}\,\, B_r(x_0)\subset\Omega\,,$$
then $v$ is locally Lipschitz in $\Omega$, and
\begin{equation}\label{stimafofu}\mathrm{ess}\!\!\sup_{B_{\frac{R}{2}}(x_0)}(\mu^2+| \nabla v|^2 )^{\frac{p}{2}}\le c (n,\widetilde L,\widetilde\ell,p) \medint_{B_R(x_0)}(\mu^2+| \nabla v|^2 )^{\frac{p}{2}}\,dx
\end{equation}
for every $B_{\frac{R}{2}}(x_0)\subset B_R(x_0)\subset\Omega$.
\end{Thm}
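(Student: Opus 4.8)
The plan is to reduce \eqref{stimafofu} to the analogous estimate for a smooth, uniformly convex integrand, obtained through a Caccioppoli inequality for the nonlinear field $V(\nabla v)$ followed by a Moser iteration, and then to pass to the limit in the regularization. Since $H$ is merely continuous, I would first regularize it, following the by–now classical scheme for uniformly convex functionals with $p$–growth: set $H_\e:=H*\ro_\e+\e\,(\mu^2+|\xi|^2)^{p/2}$, with $\ro_\e$ a standard mollifier on $\R^n$. Then $H_\e$ is of class $C^\infty$ and, since convolution in $\xi$ preserves the integral convexity \eqref{pconv} up to a controlled constant while the added term gives uniform strict ellipticity, $H_\e$ satisfies \eqref{pgro} and \eqref{pconv} with constants comparable to $\widetilde L,\widetilde\ell$ and independent of $\e\in(0,1]$. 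For a $C^2$ integrand \eqref{pconv} is equivalent to $\langle D^2H_\e(\xi)\eta,\eta\rangle\ge c(p)\,\widetilde\ell\,(\mu^2+|\xi|^2)^{(p-2)/2}|\eta|^2$, and convexity together with the gradient bound in \eqref{(H4)} forces the matching upper bound $|D^2H_\e(\xi)|\le c(p,\widetilde L)\,(\mu^2+|\xi|^2)^{(p-2)/2}$. Let $v_\e\in v+W^{1,p}_0(B_R(x_0))$ minimize $w\mapsto\int_{B_R(x_0)}H_\e(\nabla w)\,dx$. Lemma \ref{grobelow} gives a uniform coercivity bound, so the $v_\e$ are bounded in $W^{1,p}(B_R(x_0))$; since $H=\widetilde H+\widetilde\ell(\mu^2+|\xi|^2)^{p/2}$ with $\widetilde H$ convex, $H$ is strictly convex, the minimizer of $\mathcal{H}(\cdot,B_R(x_0))$ is unique, and a standard lower–semicontinuity argument yields $v_\e\to v$ in $W^{1,p}(B_R(x_0))$.

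Next I would differentiate the Euler--Lagrange equation $\int\langle D H_\e(\nabla v_\e),\nabla\varphi\rangle\,dx=0$. Using finite differences $\tau_{s,h}$ in the direction $e_s$ and the test function $\varphi=\tau_{-s,h}\big(\eta^2\,\tau_{s,h}v_\e\big)$ with $\eta\in C_0^\infty(B_R(x_0))$ a cut–off, the lower ellipticity bound controls the term $\int\eta^2(\mu^2+|\nabla v_\e(x)|^2+|\nabla v_\e(x+he_s)|^2)^{(p-2)/2}|\tau_{s,h}\nabla v_\e|^2\,dx$, while the upper bound on $D^2H_\e$ absorbs the terms carrying $\nabla\eta$. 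Letting $h\to0$, summing over $s=1,\dots,n$ and invoking Lemma \ref{V}, I obtain
$$
\int_{B_{R/2}(x_0)}\big|\nabla\big(V(\nabla v_\e)\big)\big|^2\,dx\le\frac{c}{R^2}\int_{B_R(x_0)}(\mu^2+|\nabla v_\e|^2)^{p/2}\,dx,\qquad c=c(n,\widetilde L,\widetilde\ell,p),
$$
so $V(\nabla v_\e)\in W^{1,2}_{\mathrm{loc}}$ and, by the Sobolev embedding, $(\mu^2+|\nabla v_\e|^2)^{p/2}$ lies in a higher Lebesgue class, uniformly in $\e$.

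Then I would iterate. Replacing the multiplier $\eta^2$ above by $\eta^2(\mu^2+|\nabla v_\e|^2)^{\gamma}$, $\gamma\ge0$, and repeating the computation produces a scale of Caccioppoli inequalities for the quantities $(\mu^2+|\nabla v_\e|^2)^{(p+2\gamma)/4}$; coupled with the Sobolev inequality these are reverse–Hölder estimates on nested balls with exponents growing geometrically, and the classical Moser iteration then gives
$$
\mathrm{ess}\!\!\sup_{B_{R/2}(x_0)}(\mu^2+|\nabla v_\e|^2)^{p/2}\le c(n,\widetilde L,\widetilde\ell,p)\medint_{B_R(x_0)}(\mu^2+|\nabla v_\e|^2)^{p/2}\,dx,
$$
with $c$ independent of $\e$. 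Since the right–hand side is bounded and $\nabla v_\e\to\nabla v$ in $L^p(B_R(x_0))$, the $v_\e$ are equi–Lipschitz on $B_{R/2}(x_0)$; a subsequence converges in $W^{1,\infty}_{\mathrm{loc}}$ weak-$*$ to $v$, which is therefore locally Lipschitz, and passing to the limit (semicontinuity of the $L^\infty$ norm on the left, convergence of the $L^p$ average on the right) yields \eqref{stimafofu}.

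The hard part is making the Caccioppoli inequality genuinely uniform in $\e$: this rests on having lower and upper bounds for $D^2H_\e$ of exactly the same order $(\mu^2+|\xi|^2)^{(p-2)/2}$, which is precisely where the interplay between the growth hypothesis \eqref{pgro} and the integral convexity \eqref{pconv} is used, and on the careful bookkeeping of exponents and cut–off losses through the iteration (in particular the dependence of the iteration on $n$ and $p$ only); the finite–difference justification of the second–order estimate is routine but must be carried out before $\nabla^2v_\e$ is known to exist.
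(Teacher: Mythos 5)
The paper does not prove this theorem: it is quoted verbatim from Fonseca--Fusco \cite[Theorem 2.2]{ff}, and the only subsequent remark is an inspection of that proof to extract the explicit constant \eqref{cost}. So there is no in-paper proof to compare against; I evaluate your proposal on its own.

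Your overall scheme (regularize, differentiate the Euler--Lagrange equation by finite differences to get a Caccioppoli inequality for $V(\nabla v_\e)$, iterate \`a la Moser, pass to the limit) is the classical route to Lipschitz bounds for strongly $p$-convex integrands, and is consistent with what \cite{ff} does. There is however a genuine gap at the central technical step. You assert that convexity plus the gradient bound \eqref{(H4)} ``forces the matching upper bound $|D^2 H_\e(\xi)|\le c(p,\widetilde L)(\mu^2+|\xi|^2)^{(p-2)/2}$.'' This is false. A continuous convex $H$ satisfying \eqref{pgro}--\eqref{pconv} may have a distributional Hessian with singular part (the hypotheses only make $H$ continuous, not $C^1$), so that after mollification $D^2 H_\e$ develops spikes of size $\sim\e^{-1}$. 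For instance, in one dimension $H(\xi)=\widetilde\ell(\mu^2+\xi^2)^{p/2}+c_0\max(0,\xi-1)$ satisfies both hypotheses for suitable $c_0>0$, yet $D^2(H*\ro_\e)(1)\sim c_0/\e$ is unbounded as $\e\to0$. The growth and gradient bounds only control \emph{averages} of $D^2H$ over balls of radius comparable to $(\mu^2+|\xi|^2)^{1/2}$ centred at $\xi$, not the mollified Hessian at scale $\e\ll(\mu^2+|\xi|^2)^{1/2}$. Since the difference-quotient Caccioppoli step in your proposal uses this pointwise Lipschitz bound on $DH_\e$ precisely to absorb the $\nabla\eta$-terms, the estimate does not close as written. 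You would need either a regularization that \emph{by construction} has a uniform upper Hessian bound of the correct order, or a derivation of the Caccioppoli inequality that avoids any upper Hessian bound and relies only on the monotonicity \eqref{deriv} and the growth \eqref{(H4)} (this requires a more delicate treatment of the terms carrying $\nabla\eta$, and is where the actual work in \cite{ff} lies). A minor secondary point: the regularization $H_\e=H*\ro_\e+\e(\mu^2+|\xi|^2)^{p/2}$ does not remove the degeneracy of the added term at $\xi=0$ when $\mu=0$; the usual device is to replace $\mu$ by $\mu_\e:=(\mu^2+\e^2)^{1/2}$ so that the approximating problem is genuinely uniformly elliptic on compacts.
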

\noindent In what follows, we will need  a more explicit dependence on the eigenvalues of $H$ of the constant in \eqref{stimafofu}. Actually, a careful inspection of the proof of Theorem 2.2 in \cite{ff} reveals that the constant in estimate \eqref{stimafofu} is of the type
\begin{equation}\label{cost}c (n,\widetilde L,\widetilde\ell,p)=c \left(\frac{\widetilde L}{\widetilde \ell}\right)^{\frac{2n}{p}} \end{equation}
where $c=c(n,p) \ge 1$.

\noindent The following is a technical iteration lemma (see  \cite[Lemma 7.3]{gi})
\begin{Lem}\label{iter}
Let  $\varphi$ be a nonnegative, nondecreasing function and assume that there exist  $\vartheta\in(0,1)$, $\bar R>0$, and  $0<\beta<\gamma$ such that
$$\varphi(\vartheta r)\leq \vartheta^\gamma\varphi( r)+br^{\beta}$$   for all $0<r\leq \bar R$. Then  we have
  $$\varphi(\rho)\leq C\left\{\left(\frac{\rho}{r}\right)^{\beta}\varphi(r)+b\rho^{\beta}\right\},$$
  for every $0<\rho<r\leq \bar R$, with $C=C(\vartheta,\beta,\gamma)$.
\end{Lem}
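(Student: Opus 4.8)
\emph{Proof plan.} The idea is the classical one: first establish the decay along the geometric sequence of radii $\vartheta^k r$, and then interpolate to an arbitrary radius $\rho$ by monotonicity. Fix $r\in(0,\bar R]$. Replacing $r$ by $\vartheta^{k-1}r$ in the hypothesis and iterating, a straightforward induction on $k\ge 1$ gives
$$\varphi(\vartheta^k r)\le \vartheta^{k\gamma}\varphi(r)+b\,r^{\beta}\sum_{i=0}^{k-1}\vartheta^{i\gamma}\vartheta^{(k-1-i)\beta}
=\vartheta^{k\gamma}\varphi(r)+b\,r^{\beta}\,\vartheta^{(k-1)\beta}\sum_{i=0}^{k-1}\vartheta^{i(\gamma-\beta)}.$$
Here the only point that is used in an essential way is the strict inequality $\beta<\gamma$: since $0<\vartheta<1$, the geometric series $\sum_{i\ge 0}\vartheta^{i(\gamma-\beta)}$ converges to $(1-\vartheta^{\gamma-\beta})^{-1}$, which bounds the finite sum above uniformly in $k$; moreover $\vartheta^{k\gamma}\le\vartheta^{k\beta}$ and $\vartheta^{(k-1)\beta}=\vartheta^{-\beta}\vartheta^{k\beta}$. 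Consequently there is a constant $c_0=c_0(\vartheta,\beta,\gamma)$, for instance $c_0:=1+\vartheta^{-\beta}(1-\vartheta^{\gamma-\beta})^{-1}$, with
$$\varphi(\vartheta^k r)\le c_0\,\vartheta^{k\beta}\bigl(\varphi(r)+b\,r^{\beta}\bigr)\qquad\text{for all }k\ge 0.$$

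Next, given $0<\rho<r\le\bar R$, pick the unique integer $k\ge 0$ with $\vartheta^{k+1}r<\rho\le\vartheta^k r$. Since $\varphi$ is nondecreasing, $\varphi(\rho)\le\varphi(\vartheta^k r)$; and from $\vartheta^k r<\vartheta^{-1}\rho$ we obtain $\vartheta^{k\beta}\le\vartheta^{-\beta}(\rho/r)^{\beta}$. Substituting both facts into the previous display and using $(\rho/r)^{\beta}r^{\beta}=\rho^{\beta}$ yields
$$\varphi(\rho)\le c_0\,\vartheta^{-\beta}\Bigl(\frac{\rho}{r}\Bigr)^{\beta}\bigl(\varphi(r)+b\,r^{\beta}\bigr)
= c_0\,\vartheta^{-\beta}\Bigl[\Bigl(\frac{\rho}{r}\Bigr)^{\beta}\varphi(r)+b\,\rho^{\beta}\Bigr],$$
so the claim holds with $C:=c_0\,\vartheta^{-\beta}$, which depends only on $\vartheta,\beta,\gamma$.

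There is essentially no analytic obstacle in this lemma; the argument is pure book-keeping. The one condition that must be watched is $\beta<\gamma$, which is precisely what makes the series in the iteration summable while simultaneously letting the fast decay $\vartheta^{k\gamma}$ be absorbed into the slower rate $\vartheta^{k\beta}$ (if $\beta=\gamma$ the constant $c_0$ would degenerate). The only other mildly delicate step is the transition from the discrete radii $\{\vartheta^k r\}$ to a general $\rho$, handled by sandwiching $\rho$ between two consecutive terms of that sequence and invoking monotonicity of $\varphi$, at the negligible cost of the factor $\vartheta^{-\beta}$.
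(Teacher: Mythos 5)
Your proof is correct and is exactly the classical argument; the paper itself does not prove this lemma but simply cites \cite[Lemma 7.3]{gi}, where the same iteration-plus-interpolation argument appears. The induction, the use of $\beta<\gamma$ to sum the geometric series and absorb $\vartheta^{k\gamma}$ into $\vartheta^{k\beta}$, and the passage from discrete radii $\vartheta^k r$ to a general $\rho$ via monotonicity are all handled correctly.
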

\noindent The next result relates the decay estimate for the gradient of a Sobolev function with its H\"older regularity properties  (see \cite[Theorem 1.1, p. 64]{Gia}, \cite[Theorem 7.19]{GTr})

\begin{Thm}[Morrey's Lemma]\label{Morrey} Let $u\in W^{1,1}(\Omega)$  and suppose that there exist positive constants $K,\, 0<\alpha\le 1$ such that   $$\int_{B_r(x)}|\nabla u|\,dx\le K r^{n-1+\alpha},$$
for all balls $B(x,r)\subset \Omega$, $x\in \Omega$,  $r>0$.
Then $u\in C^{0,\alpha}(\Omega)$.
\end{Thm}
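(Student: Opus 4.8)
The plan is to derive the pointwise H\"older bound from the gradient decay through an integral (Campanato-type) control of the mean oscillation of $u$. First I would invoke the scaled Poincar\'e--Wirtinger inequality on balls: there is $c=c(n)$ such that, for every ball $B_r(x)\subset\Omega$, writing $u_{x,r}:=\frac{1}{|B_r(x)|}\int_{B_r(x)}u\,dy$ for the integral average,
\[
\int_{B_r(x)}|u-u_{x,r}|\,dy\le c(n)\,r\int_{B_r(x)}|\nabla u|\,dy .
\]
Feeding in the hypothesis $\int_{B_r(x)}|\nabla u|\,dy\le K r^{n-1+\alpha}$ and dividing by $|B_r(x)|\simeq r^{n}$ gives at once
\[
\frac{1}{|B_r(x)|}\int_{B_r(x)}|u-u_{x,r}|\,dy\le c(n)\,K\,r^{\alpha}\qquad\text{for all }B_r(x)\subset\Omega ,
\]
that is, $u$ belongs (locally) to the Campanato space $\mathcal L^{1,n+\alpha}(\Omega)$. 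At this point one could simply quote the classical isomorphism $\mathcal L^{1,n+\alpha}_{\mathrm{loc}}(\Omega)\cong C^{0,\alpha}_{\mathrm{loc}}(\Omega)$ and stop; for completeness I would instead run the elementary dyadic argument below.

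Second, I would turn the oscillation bound into a Cauchy estimate for averages on shrinking concentric balls. Since $B_{r/2}(x)\subset B_r(x)$,
\[
|u_{x,r}-u_{x,r/2}|\le\frac{1}{|B_{r/2}(x)|}\int_{B_{r/2}(x)}|u-u_{x,r}|\,dy\le 2^{n}\,\frac{1}{|B_{r}(x)|}\int_{B_{r}(x)}|u-u_{x,r}|\,dy\le c(n)\,K\,r^{\alpha},
\]
so, iterating along the radii $2^{-k}r$ and summing the geometric series (here $\alpha>0$ is used), the sequence $(u_{x,2^{-k}r})_{k}$ is Cauchy. Its limit $u^{\ast}(x)$ coincides with $u(x)$ at every Lebesgue point of $u$, hence $\mathcal L^{n}$-a.e.; replacing $u$ by $u^{\ast}$ on the resulting null set we obtain the precise representative satisfying
\[
|u(x)-u_{x,r}|\le c(n)\,K\,r^{\alpha}\qquad\text{whenever }B_r(x)\subset\Omega .
\]

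Third, for $x,y\in\Omega$ with $\ro:=|x-y|$ small enough that $B_{2\ro}(x)\subset\Omega$, I would use $B_{\ro}(y)\subset B_{2\ro}(x)$ and the same averaging trick to get $|u_{y,\ro}-u_{x,2\ro}|\le 2^{n}\cdot\frac{1}{|B_{2\ro}(x)|}\int_{B_{2\ro}(x)}|u-u_{x,2\ro}|\,dy\le c(n)K\ro^{\alpha}$, whence
\[
|u(x)-u(y)|\le|u(x)-u_{x,2\ro}|+|u_{x,2\ro}-u_{y,\ro}|+|u_{y,\ro}-u(y)|\le c(n)\,K\,\ro^{\alpha}=c(n)\,K\,|x-y|^{\alpha}.
\]
This is precisely the claimed local $C^{0,\alpha}$ estimate, so $u\in C^{0,\alpha}(\Omega)$. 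The only point needing care is the radius bookkeeping near $\partial\Omega$ — all the balls above must sit inside $\Omega$, so the bounds are first obtained on a compactly contained subdomain and the resulting H\"older seminorm there depends on the distance to $\partial\Omega$; since in the statement (and in every application in this paper) $C^{0,\alpha}(\Omega)$ is meant in the local sense, this is not a genuine obstacle. I do not foresee any other difficulty: the argument is purely measure-theoretic and uses nothing about $u$ beyond $u\in W^{1,1}$ and the prescribed decay, the only mildly delicate step being the passage from the mean oscillation bound to the precise representative via Lebesgue points.
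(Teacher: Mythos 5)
The paper does not prove this statement; it cites it as a classical result (Giaquinta's \emph{Multiple Integrals} and Gilbarg--Trudinger). Your proof is correct and is precisely the standard argument given in those references: the scaled Poincar\'e--Wirtinger inequality converts the gradient decay into a Campanato bound on the mean oscillation, the dyadic telescoping of averages produces the precise representative and a Cauchy estimate $|u(x)-u_{x,r}|\le c(n)K r^{\alpha}$, and the two-point comparison via a doubled ball closes the H\"older estimate. The closing remark about keeping all balls inside $\Omega$ and reading $C^{0,\alpha}(\Omega)$ in the local sense is the right caveat and is consistent with how the result is used in the paper. No gaps.
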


\noindent Given a Borel set $E$ in $\mathbb{R}^n$,  $P(E,\Omega)$ denotes the perimeter of $E$ in $\Omega$,  defined as
$$P(E,\Omega)=\sup\left\{\int_E \mathrm{div}\phi\,dx:\,\,\, \phi\in C^1_0(\Omega;\mathbb{R}^n), \,\, |\phi|\le 1\right\}.$$
It is known that, for a set of finite perimeter $E$, one has
$$P(E,\Omega)=\mathcal{H}^{n-1}(\partial^* E)$$
where
$$\partial^* E=\left\{ x\in \Omega:\,\, \limsup_{\rho\to 0^+}\frac{P(E,B_\rho(x))}{\rho^{n-1}}>0\right\}$$
is the reduced boundary of $E$ (for more details we refer to \cite{AFP}).

 \noindent Given a set $E\subset\Om$ of finite perimeter in $\Om$, for every ball $B_r(x)\Subset\Om$ we measure how far $E$ is from being an area minimizer in the ball by setting
$$
\psi(E,B_r(x)):=P(E,B_r(x))-\min\left\{P(A,B_r(x)):\,A\Delta E\Subset B_r(x),\,\,\chi_A \in BV(\mathbb{R}^n)\right\}\,.
$$
The following regularity result, due to Tamanini (see \cite{Tam}), asserts that if the excess $\psi(E,B_r(x))$ decays fast enough when $r\to 0$, then $E$ has essentially the same regularity properties of an area minimizing set.
\begin{Thm}\label{tre}
Let $\Om$ be an open subset of $\R^n$ and let $E$ be a set of finite perimeter satisfying, for some $\delta\in (0,\frac{1}{2})$,
$$
\psi(E,B_r(x))\leq cr^{n-1+\delta}
$$
for every $x\in\Om$ and every $r\in(0,r_0)$, with $c=c(x),r_0=r_0(x)$  local positive constants. Then $\partial^*E$ is a $C^{1,\delta}$-hypersurface in $\Om$ and $\H^s\left((\partial E\setminus \partial^*E)\cap\Om\right))=0$ for all $s>n-8$.
\end{Thm}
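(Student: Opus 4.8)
The plan is to run the classical De Giorgi--Tamanini excess-decay scheme for almost area-minimizing sets; this is essentially Tamanini's regularity theorem, so the argument is standard geometric measure theory and I only sketch its skeleton. The decisive preliminary observation is that the deficit $\psi(E,B_r(x))\le c\,r^{n-1+\delta}$ is \emph{subcritical}: since $\psi$ scales like the perimeter, the blow-up $E_{x_0,\ro}:=(E-x_0)/\ro$ satisfies
$$\psi\big(E_{x_0,\ro},B_r\big)=\ro^{1-n}\,\psi\big(E,B_{\ro r}(x_0)\big)\le c\,r^{n-1+\delta}\,\ro^{\delta}\longrightarrow 0\qquad(\ro\to0^+),$$
so every $L^1_{\mathrm{loc}}$-limit of blow-ups of $E$ is a genuine local perimeter minimizer. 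I would first record the standard \emph{density estimates}: comparing $E$ with $E\cup B_r(x)$ and with $E\setminus B_r(x)$ and invoking the isoperimetric inequality, one obtains, for $x$ in the measure-theoretic boundary $\partial E$ and $r<r_0(x)$,
$$c_1 r^{n-1}\le P(E,B_r(x))\le c_2 r^{n-1},\qquad c_3\,|B_r|\le |E\cap B_r(x)|\le (1-c_3)\,|B_r|,$$
the extra term $c\,r^{n-1+\delta}$ being lower order and harmless. In particular the topological boundary of the good representative of $\chi_E$ is $\H^{n-1}$-rectifiable and $\H^{n-1}$-a.e. of its points lies in $\partial^*E$.

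Next comes the \emph{excess-decay lemma}, the technical core. For $x\in\partial E$ and $B_r(x)\subset\Om$ set
$$\mathbf e(x,r):=\min_{|\nu|=1}\ \frac{1}{r^{n-1}}\int_{\partial^*E\cap B_r(x)}\frac{|\nu_E-\nu|^2}{2}\,d\H^{n-1}.$$
One shows there are constants $\e_0>0$, $\vartheta\in(0,1)$, $C_0>0$, depending only on $n$, such that
$$\mathbf e(x,\vartheta r)\le C_0\,\vartheta^{2}\,\mathbf e(x,r)+C_0\,\frac{\psi(E,B_r(x))}{r^{n-1}}\qquad\text{whenever }\mathbf e(x,r)<\e_0 .$$
This is proved by contradiction and compactness: a sequence of rescaled sets with vanishing excess and vanishing rescaled deficit converges, by the density estimates, to a perimeter minimizer which, being flat, is a half-space, and De Giorgi's tilt-excess decay for minimizers -- equivalently, harmonic approximation of the Lipschitz graph representing $\partial E$ at that scale -- produces the factor $\vartheta^{2}$ in the limit, the deficit surviving only as the additive error. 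Inserting $\psi(E,B_r(x))/r^{n-1}\le c\,r^{\delta}$, choosing $\vartheta$ small enough that $C_0\vartheta^2\le\vartheta^{\gamma}$ for some $\gamma>\delta$, and iterating by Lemma \ref{iter}, one gets, for all small $r$, a power decay of the excess $\mathbf e(x,r)\le C\,r^{2\delta}$ at every $x$ with $\mathbf e(x,\ro)<\e_0$ for some small $\ro$ (this is the step that produces the precise Hölder exponent in the statement). Since $x\mapsto\mathbf e(x,\ro)$ is continuous, this persists with uniform constants near each such point, so the ``good set'' $\mathcal R:=\{x\in\partial E:\ \mathbf e(x,\ro)<\e_0\ \text{for some small }\ro\}$ is relatively open in $\partial E$.

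From the decay $\mathbf e(x,r)\le C\,r^{2\delta}$ on $\mathcal R$ the $C^{1,\delta}$-regularity follows by the usual route: small excess plus the density estimates give, via the Lipschitz approximation theorem, that after a rotation $\partial E\cap B_r(x)$ is the graph of a Lipschitz function $w$ with small Lipschitz constant, and the excess decay translates into the Campanato estimate $\medint_{B_\rho}|\nabla w-(\nabla w)_{\rho}|^2\,dy\le C\rho^{2\delta}$, whence $\nabla w\in C^{0,\delta}$; thus $\partial E$ is a $C^{1,\delta}$-hypersurface near $x$, and in particular $x\in\partial^*E$. Conversely, by De Giorgi's structure theorem every $x\in\partial^*E$ has arbitrarily small excess at small scales, so $\mathcal R=\partial^*E$ is exactly the (open) regular set and it is $C^{1,\delta}$; combined with the rectifiability above this yields $\H^{n-1}\big((\partial E\setminus\partial^*E)\cap\Om\big)=0$.

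Finally, the singular set $\Sigma:=(\partial E\setminus\partial^*E)\cap\Om$ is handled by Federer's dimension-reduction argument: blow-ups at a point of $\Sigma$ subconverge -- compactness from the density estimates, passage to the limit from subcriticality of $\psi$ -- to a non-flat \emph{area-minimizing} cone whose own singular set has dimension one less; iterating reduces the claim to the non-existence of singular area-minimizing hypercones in $\R^{k}$ for $k\le 7$ (Simons' theorem; Bombieri--De Giorgi--Giusti), giving $\dim_{\H}\Sigma\le n-8$ and hence $\H^{s}(\Sigma)=0$ for every $s>n-8$. The two genuinely delicate points are the excess-decay lemma -- and calibrating its iteration through Lemma \ref{iter} so as to land on the stated exponent -- and verifying that the whole De Giorgi--Federer apparatus, above all the fact that the blown-up tangent objects are honest minimizers, carries over verbatim to the almost-minimizing class singled out by $\psi(E,B_r(x))\le c\,r^{n-1+\delta}$.
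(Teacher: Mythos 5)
The paper offers no proof of Theorem \ref{tre}: it is quoted verbatim from Tamanini's 1982 paper \cite{Tam} and is used as a black box. So there is nothing in the source to compare your argument against; what you have produced is a reasonable sketch of how one \emph{would} prove Tamanini's theorem by the De Giorgi excess-decay scheme adapted to the almost-minimizing class, and the overall skeleton (density estimates, excess improvement via compactness and harmonic approximation, iteration, Lipschitz/Campanato upgrade, Federer dimension reduction) is the right one.

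One internal calibration in your sketch does not add up, however, and it is worth pinning down because it is exactly the ``delicate'' point you flag. With the excess-improvement inequality $\mathbf e(x,\vartheta r)\le C_0\vartheta^{2}\mathbf e(x,r)+C_0\,\psi(E,B_r(x))/r^{n-1}$ and the hypothesis $\psi(E,B_r(x))/r^{n-1}\le c\,r^{\delta}$, Lemma \ref{iter} (with $\gamma$ slightly below $2$ chosen so that $C_0\vartheta^{2}\le\vartheta^{\gamma}$, and $\beta=\delta$) outputs a decay rate governed by the \emph{smaller} exponent, namely $\mathbf e(x,\rho)\le C\rho^{\delta}$ — not $C\rho^{2\delta}$ as you write. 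Since the excess is an $L^{2}$-mean oscillation of the normal, $\mathbf e(x,\rho)\lesssim\rho^{\delta}$ translates into $\nu_E\in C^{0,\delta/2}$, i.e.\ $\partial^{*}E$ is $C^{1,\delta/2}$. This matches Tamanini's original formulation, where the hypothesis is $\psi(E,B_\rho)\le c\rho^{n-1+2\alpha}$ with $\alpha\in(0,\tfrac12)$ and the conclusion is $C^{1,\alpha}$. So either adjust the hypothesis you feed into the iteration to $\psi\le cr^{n-1+2\delta}$, or land on $C^{1,\delta/2}$; as written, the ``$\mathbf e(x,r)\le Cr^{2\delta}$'' step does not follow from the displayed improvement lemma. (This discrepancy is inherited by the paper's own restatement of \cite{Tam}; it is harmless for the paper's applications, where only \emph{some} positive Hölder exponent is needed, but you should not reproduce it as if the iteration delivered it.) Apart from this, you could also make explicit that the compactness step needs the perimeter upper bound from the density estimates uniformly along the blow-up sequence, and that lower-semicontinuity of $\psi$ together with its subcritical scaling is what guarantees the blow-up limit is a genuine minimizer; these are the two places where the almost-minimizing hypothesis actually enters the compactness argument.
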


\section{Proof of Theorem \ref{uno}}

\bigskip

This section is devoted to the proof of Theorem \ref{uno}, which follows closely  that of Theorem 1 in \cite{EF}. Since several  modifications are needed, we present it here for the  convenience of the reader.

\bigskip

\begin{proof}[Proof of Theorem \ref{uno}]

 \noindent {\bf Step 1.}\quad We prove the first part of Theorem~\ref{uno} arguing by contradiction. Assume that there exist a sequence $\{\lambda_h\}_{h\in\N}$ such that $\lambda_h\to\infty$ as $h\to\infty$, and a sequence of configurations $\{(u_h,E_h)\}$ minimizing ${\mathcal I}_{\lambda_h}$ such that $u_h=u_0$ on $\partial \Omega$ and $|E_h|\not=d$ for all $h$. Notice that
\begin{equation}\label{unouno}
{\mathcal I}_{\lambda_h}(u_h,E_h)\leq{\mathcal I}(u_0,E_0)=:\Theta\,,
\end{equation}
where $E_0\subset\Om$ is a fixed set of finite perimeter such that $|E_0|=d$. Assume that $|E_h|<d$ for a (not relabeled) subsequence  (if $|E_h|>d$ the proof is similar). We claim that, for $h$ sufficiently large, there exists a configuration $({\tilde u}_h,{\widetilde E}_h)$ such that ${\mathcal I}_{\lambda_h}({\tilde u}_h,{\widetilde E}_h)<{\mathcal I }_{\lambda_h}(u_h,E_h)$, thus proving that $|E_h|=d$ for all $h$ sufficiently large, say $\lambda\ge \lambda_0$.

 By our assumptions on $F$ and $G$, it follows that the sequence $\{u_h\}$ is bounded in  $W^{1,p}(\Om)$, the perimeters of the sets $E_h$ are bounded, and $|E_h|\to d$. Therefore, without  loss of generality, we may assume, possibly extracting  a subsequence (not relabeled), that there exists a configuration $(u,E)$ such that $u_h\to u$ weakly in
$W^{1,p}(\Om)$, $\chi_{E_h}\to\chi_E$ a.e. in $\Om$, and  $E$ is a set of finite perimeter in $\Om$ with $|E|=d$.

\medskip

\noindent {\bf Step 2.}\,\,{\it Construction of  $({\tilde u}_h,{\widetilde E}_h)$.}\quad
Fix a point $x\in\partial^*E\cap\Om$ (such a point exists since $E$ has finite perimeter in $\Om$, $0<|E|<|\Om|$, and $\Omega$ is connected). By De Giorgi's structure theorem for sets of finite perimeter (see \cite[Theorem~3.59]{AFP}), the sets $E_r=(E-x)/r$ converge locally in measure to the half space $H=\{z\cdot\nu_E(x)>0\}$, i.e., $\chi_{E_r}\to\chi_H$ in $L^1_{\rm loc}(\R^n)$, where $\nu_E(x)$ is the generalized inner normal to $E$ at $x$ (see \cite[Definition~3.54]{AFP}). Let $y\in B_1(0)\setminus H$ be the point $y:=-\nu_E(x)/2$. Given $\e>0$ and small (to be chosen at the end of the proof), since $\chi_{E_r}\to\chi_H$ in $L^1(B_1(0))$ there exists $r>0$ such that
$$
B_{2r}(x)\subset \Omega, \qquad |E_r\cap B_{1/2}(y)|<\e,\qquad |E_r\cap B_1(y)|\geq|E_r\cap B_{1/2}(0)|>\frac{\omega_n}{2^{n+2}}\,,
$$
where $\omega_n$ denotes the measure of the unit ball of $\R^n$. Therefore, setting  $x_r:=x+ry\in\Om$, we have
$$
B_{r}(x_r)\subset \Omega, \qquad |E\cap B_{r/2}(x_r)|<\e r^n,\qquad|E\cap B_r(x_r)|>\frac{\omega_nr^n}{2^{n+2}}\,.
$$
Assume, without loss of generality, that $x_r=0$, and in the sequel denote the open  ball centered at the origin and with radius $r>0$ by $B_r$. From the convergence of $\{E_h\}$ to $E$ we have that, for all $h$ sufficiently large,
\begin{equation}\label{unodue}
|E_h\cap B_{r/2}|<\e r^n,\qquad|E_h\cap B_r|>\frac{\omega_nr^n}{2^{n+2}}\,.
\end{equation}
Define the  bi-Lipschitz map  $\phi: B_r\to B_r$ by
\begin{equation}\label{unotre}
\Phi(x):=
\begin{cases}
\bigl(1-\sigma(2^n-1)\bigr)x & \text{if}\,\,\,|x|<\displaystyle\frac{r}{2},\\
x+\sigma\Bigl(\displaystyle1-\frac{r^n}{|x|^n}\Bigr)x & \text{if}\,\,\,\displaystyle\frac{r}{2}\leq|x|<r,\\
x & \text{if}\,\,\,|x|\geq r\,,
\end{cases}
\end{equation}
for some fixed $0<\sigma<1/2^n$, to be determined later, such that, setting
$$
{\widetilde E}_h:=\Phi(E_h),\qquad{\tilde u}_h:=u_h\circ\Phi^{-1}\,,
$$
we have $|{\widetilde E}_h|<d$. We obtain
\begin{eqnarray}\label{unoquattro}
& &{\mathcal I}_{\lambda_h}(u_h,E_h)-{\mathcal I}_{\lambda_h}({\tilde u}_h,{\widetilde E}_h)\nonumber \\
&=&\displaystyle\biggl[\int_{B_r}\!\Big(F(\nabla u_h)+\chi_{_{E_h}}G(\nabla u_h)\Big)\,dx-\int_{B_r}\!\Big(F(\nabla{\tilde u}_h)+\chi_{_{\widetilde E_h}}G(\nabla{\tilde u}_h)\Big)
\,dy\biggr] \\
&& \quad+\bigl(P(E_h,{\overline B}_r)-P({\widetilde E}_h,{\overline B}_r)\bigr)+\lambda_h\bigl(|{\widetilde E}_h|-|E_h|\bigr) \nonumber \\
\!\!\!&=:&\!\!\! I_{1,h}+I_{2,h}+I_{3,h}\,. \nonumber
\end{eqnarray}
{\bf Step 3.}\,\,{\it Estimate of  $I_{1,h}$.}
 We start by evaluating the gradient and the Jacobian determinant of $\Phi$ in the annulus $B_r\setminus B_{r/2}$. If $r/2<|x|<r$, then we have
$$
\frac{\partial\Phi_i}{\partial x_j}(x)=\Bigl(1+\sigma-\frac{\sigma r^n}{|x|^n}\Bigr)\delta_{ij}+n\sigma r^n\frac{x_ix_j}{|x|^{n+2}}\qquad\qquad \text{for all}\,\,\,i,j=1,\dots n
$$
and thus, if $\eta\in\R^n$,
$$
(\nabla\Phi\circ\eta)\cdot\eta=\Bigl(1+\sigma-\frac{\sigma r^n}{|x|^n}\Bigr)|\eta|^2+n\sigma r^n\frac{(x\cdot\eta)^2}{|x|^{n+2}}
$$
from which it follows that
$$
|\nabla\Phi\circ\eta|\geq\Bigl(1+\sigma-\frac{\sigma r^n}{|x|^n}\Bigr)|\eta|\,.
$$
From this inequality we easily deduce an estimate on the norm of $\nabla\Phi^{-1}$, precisely,
\begin{eqnarray}\label{unocinque}
\bigl\|\nabla\Phi^{-1}\bigl(\Phi(x)\bigr)\bigr\| \!\!\!&=&\!\!\! \max_{|\eta|=1}\biggl|\nabla\Phi^{-1}\circ\Bigl(\frac{\nabla\Phi\circ\eta}{|\nabla\Phi\circ\eta|}\Bigr)\biggr|=\max_{|\eta|=1}\frac{1}{|\nabla\Phi\circ\eta|} \\
\!\!\!&\leq&\!\!\! \Bigl(1+\sigma-\frac{\sigma r^n}{|x|^n}\Bigr)^{-1}\leq\bigl(1-(2^n-1)\sigma\bigr)^{-1}\qquad\text{for all}\,\,\,x\in B_r\setminus B_{r/2}\,. \nonumber
\end{eqnarray}
Concerning the Jacobian, we write, for $x\in B_r\setminus B_{r/2}$,
\begin{equation}\label{unocinque.1}
\Phi(x)=\varphi(|x|)\frac{x}{|x|}\,,
\end{equation}
where
$$
\varphi(t)=t\Bigl(1+\sigma-\frac{\sigma r^n}{t^n}\Bigr),\qquad\qquad \text{for all}\,\,\,t\in[r/2,r]\,.
$$
Let  $I$ denote the identity map in $\R^n$. Recalling that if $A=I+a\otimes b$ for some vectors $a,\,b\in\R^n$,  then ${\rm det}A=1+a\cdot b$, a straightforward calculation gives for all $x\in B_r\setminus B_{r/2}$
\begin{equation}\label{unosei}
J\Phi(x)=\varphi^\prime(|x|)\Bigl(\frac{\varphi(|x|)}{|x|}\Bigr)^{n-1}=\Bigl(1+\sigma+\frac{(n-1)\sigma r^n}{|x|^n}\Bigr)\Bigl(1+\sigma-\frac{\sigma r^n}{|x|^n}\Bigr)^{n-1}\,.
\end{equation}
We have
\begin{eqnarray}\label{unosette}
\Bigl(1+\sigma-\frac{\sigma r^n}{|x|^n}\Bigr)^{n-1}&=&\Bigl(1+\sigma\Bigr)^{n-1}\Bigl(1-\frac{\frac{\sigma r^n}{|x|^n}}{1+\sigma}\Bigr)^{n-1}\ge \Bigl(1+\sigma\Bigr)^{n-1}\Bigl(1-(n-1)\frac{\frac{\sigma r^n}{|x|^n}}{1+\sigma}\Bigr)\cr\cr
&=&\Bigl(1+\sigma\Bigr)^{n-2}\Bigl(1+\sigma-(n-1)\frac{\sigma r^n}{|x|^n}\Bigr)\ge 1+\sigma-(n-1)\frac{\sigma r^n}{|x|^n}.
\end{eqnarray}
Since $x\in B_r\setminus B_{\frac{r}{2}}$, by \eqref{unosei} and \eqref{unosette} we have
\begin{eqnarray}\label{unootto}
J\Phi(x)&\ge& \left(1+\sigma+(n-1)\frac{\sigma r^n}{|x|^n}\right)\left(1+\sigma-(n-1)\frac{\sigma r^n}{|x|^n}\right)=
(1+\sigma)^2-(n-1)^2\frac{\sigma^2 r^{2n}}{|x|^{2n}}\cr\cr
&\ge&(1+\sigma)^2-4^n(n-1)^2\sigma^2=1+2\sigma-\Big(4^n(n-1)^2-1\Big)\sigma^2>1+\sigma
\end{eqnarray}
provided that we chose
$$\sigma <\frac{1}{4^n(n-1)^2-1}.$$ On the other hand, from \eqref{unosei} we get also
\begin{equation}\label{unosette.1}
J\Phi(x)\leq1+2^nn\sigma\,.
\end{equation}
Let us now turn to the estimate of $I_{1,h}$. Performing the change of variable $y=\Phi(x)$ in the second integral defining $I_{1,h}$, and observing that $\chi_{{\widetilde E}_h}(\Phi(x))=\chi_{E_h}(x)$, we get
\begin{eqnarray}
I_{1,h}&=&\int_{B_r}\!\Bigl[F(\nabla u_h(x))-F\bigl(\nabla u_h(x)\circ\nabla\Phi^{-1}\bigl(\Phi(x)\bigl)\bigl)J\Phi(x)\cr\cr
&& \qquad+\chi_{E_h}(x)\bigl[G(\nabla u_h(x))-G\bigl(\nabla u_h(x)\circ\nabla\Phi^{-1}\bigl(\Phi(x)\bigl)\bigl)J\Phi(x)\Bigr]\,dx\cr\cr
&=:&A_{1,h}+A_{2,h}\,,
\end{eqnarray}
where $A_{1,h}$ stands for the above integral evaluated in $B_{r/2}$ and $A_{2,h}$ for the same integral evaluated in $B_r\setminus B_{r/2}$. Recalling the definition of $\Phi$ in \eqref{unotre} and the growth assumptions on $F,G$ in (F1) and (G1), respectively, we get
\begin{eqnarray*}
A_{1,h}\!\!\!&=&\!\!\!\int_{B_{r/2}}\!\bigl[F(\nabla u_h(x))-F\bigl(\nabla u_h(x)\circ\bigl(1-\sigma(2^n-1)\bigr)^{-1}I\bigr)\bigl(1-\sigma(2^n-1)\bigr)^n\bigr]\,dx \\
\!\!\!&+&\!\!\!\int_{B_{r/2}}\!\chi_{E_h}(x)\bigl[G(\nabla u_h(x))-G\bigl(\nabla u_h(x)\circ\bigl(1-\sigma(2^n-1)\bigr)^{-1}I\bigr)\bigl(1-\sigma(2^n-1)\bigr)^n\bigr]\,dx \\
\!\!\!&\ge&\!\!\! -\int_{B_{r/2}}\!F(\nabla u_h(x)\circ\bigl(1-\sigma(2^n-1)\bigr)^{-1}I\bigr)\bigl(1-\sigma(2^n-1)\bigr)^n\,dx\\
\!\!\!& &\!\!\! -\int_{B_{r/2}}\!\chi_{E_h}(x)G(\nabla u_h(x)\circ\bigl(1-\sigma(2^n-1)\bigr)^{-1}I\bigr)\bigl(1-\sigma(2^n-1)\bigr)^n\,dx\\
&\geq& - c(p,\beta,L) \int_{B_{r/2}}\!(1+\chi_{E_h}(x))\bigl|\nabla u_h(x)\circ\bigl(1-\sigma(2^n-1)\bigr)^{-1}I\bigr|^p\bigl(1-\sigma(2^n-1)\bigr)^n\,dx\\
\!\!\!& &\!\!\! - c(p,\beta,L)\mu^p \bigl(1-\sigma(2^n-1)\bigr)^n r^n\\
&\ge& - c(p,\beta,L)\int_{B_{r/2}}\!(1+\chi_{E_h}(x))\bigl|\nabla u_h(x)\bigr|^p\bigl(1-\sigma(2^n-1)\bigr)^{n-p}\,dx\\
\!\!\!& &\!\!\! - c(p,\beta,L)\mu^p \bigl(1-\sigma(2^n-1)\bigr)^n r^n\\
&=&-c \bigl(1-\sigma(2^n-1)\bigr)^{n-p}\int_{B_{r/2}}\!\bigl|\nabla u_h(x)\bigr|^p\, dx- c\mu^p \bigl(1-\sigma(2^n-1)\bigr)^n r^n\\
\!\!\!&\ge&\!\!\! -C(n,p,\beta,L,\sigma,\mu)(\Theta+r^n)
\end{eqnarray*}
where we used \eqref{unouno}.
Recalling \eqref{unocinque}, \eqref{unosette.1} and \eqref{unouno} we have
\begin{eqnarray*}
A_{2,h}\!\!\!&=&\!\!\!\int_{B_r\setminus B_{r/2}}\!\bigl[F(\nabla u_h(x))-F\Bigl(\nabla u_h(x)\circ\nabla\Phi^{-1}\bigl(\Phi(x)\bigr)\Bigr)J\Phi(x)\bigr]\,dx \\
&+&\!\!\!\int_{B_r\setminus B_{r/2}}\!\chi_{E_h}(x)\bigl[G(\nabla u_h(x))-G\Bigl(\nabla u_h(x)\circ\nabla\Phi^{-1}\bigl(\Phi(x)\bigr)\Bigr)J\Phi(x)\bigr]\,dx \\
\!\!\!&\geq&- c(p,\beta,L)\!\!\!\int_{B_r\setminus B_{r/2}}\!(1+\chi_{E_h}(x))|\nabla u_h(x)|^p\bigl(1-(2^n-1)\sigma\bigr)^{-p}\bigl(1+2^nn\sigma\bigr)\,dx \\
\!\!\!& &\!\!\! -c(p,\beta,L)\mu^p \bigl(1+2^nn\sigma\bigr) r^n\\
\!\!\!&\geq&\!\!\! -C(n,p,\beta,L,\sigma)\int_{B_r\setminus B_{r/2}}\!|\nabla u_h(x)|^p\,dx-c(p,\beta,L)\mu^p \bigl(1+2^nn\sigma\bigr) r^n\\
\!\!\!&\geq&\!\!\! -C(n,p,\beta,L,\sigma,\mu)(\Theta+r^n)\,.
\end{eqnarray*}
 Thus,
from the above  estimates  we  conclude that
\begin{equation}\label{unonove}
I_{1,h}\geq-C(n,p,\beta,L,\sigma,\mu)(\Theta+r^n)\,.
\end{equation}
{\bf Step 4.}\,\, {\it Estimate of $I_{2,h}$.} We  use  the area formula for maps between rectifiable sets. To this aim,  for  $x\in\partial^*E_h$ denote by  $T_{h,x}:\pi_{h,x}\to\R^n$ the tangential differential at $x$ of  $\Phi$ along the approximate tangent space $\pi_{h,x}$ to $\partial^*E_h$, which is defined by  $T_{h,x}(\tau)=\nabla\Phi(x)\circ\tau$ for all $\tau\in\pi_{h,x}$. We recall (see \cite[Definition~2.68]{AFP}) that the $(n-1)$-dimensional jacobian of $T_{h,x}$ is given by
$$
J_{n-1}T_{h,x}=\sqrt{{\rm det}\bigl(T^*_{h,x}\circ T_{h,x}\bigr)}\,,
$$
where $T^*_{h,x}$ is the adjoint of the map $T_{h,x}$. To estimate $J_{n-1}T_{h,x}$, fix $x\in\partial^*E_h\cap(B_r\setminus B_{r/2})$. Denote by $\{\tau_1,\dots,\tau_{n-1}\}$ an orthonormal base for $\pi_{h,x}$, and by $L$ the $n\times(n-1)$ matrix representing $T_{h,x}$ with respect to the fixed base in $\pi_{h,x}$ and the standard base $\{e_1,\dots,e_n\}$ in $\R^n$. From \eqref{unocinque.1} we have
$$
L_{ij}=\nabla\Phi_i\cdot\tau_j=\frac{\varphi(|x|)}{|x|}e_i\cdot\tau_j+\Bigl(\varphi^\prime(|x|)-\frac{\varphi(|x|)}{|x|}\Bigr)\frac{x_i}{|x|}\frac{x\cdot\tau_j}{|x|},\qquad i=1,\dots,n,\,j=1,\dots,n-1\,.
$$
Thus, for $j,l=1,\dots,n-1$, we obtain
$$
(L^*\circ L)_{jl}=\frac{\varphi^2(|x|)}{|x|^2}\sum_{i=1}^n(e_i\cdot\tau_j)(e_i\cdot\tau_l)+\Bigl(\varphi^{\prime2}(|x|)-\frac{\varphi^2(|x|)}{|x|^2}\Bigr)\frac{(x\cdot\tau_j)(x\cdot\tau_l)}{|x|^2}\,.
$$
Since $J_{n-1}T_{h,x}$ is invariant by rotation, in order to evaluate det$(L^*\circ L)$ we may assume, without loss of generality, that $\tau_j=e_j$, for all $j=1,\dots,n-1$. We deduce that
$$
L^*\circ L=\frac{\varphi^2(|x|)}{|x|^2}I^{(n-1)}+\Bigl(\varphi^{\prime2}(|x|)-\frac{\varphi^2(|x|)}{|x|^2}\Bigr)\frac{x^\prime\otimes x^\prime}{|x|^2}\,,
$$
where $I^{(n-1)}$ denotes the identity map on $\R^{n-1}$ and $x^\prime=(x_1,\dots,x_{n-1})$. With a calculation similar to the one performed to obtain \eqref{unosei}, from the equality above we easily get that
$$
{\rm det}(L^*\circ L)=\Bigl(\frac{\varphi^2(|x|)}{|x|^2}\Bigr)^{n-1}\Bigl[1+\frac{|x|^2}{\varphi^2(|x|)}\Bigl(\varphi^{\prime2}(|x|)-\frac{\varphi^2(|x|)}{|x|^2}\Bigr)\frac{|x^\prime|^2}{|x|^2}\Bigr]
$$
and so, using \eqref{unocinque.1} we can estimate for $x\in\partial^*E_h\cap(B_r\setminus B_{r/2})$
\begin{eqnarray}\label{unodieci}
J_{n-1}T_{h,x}\!\!\!&=&\!\!\!\sqrt{{\rm det}(L^*\circ L)}=\Bigl(\frac{\varphi(|x|)}{|x|}\Bigr)^{n-1}\sqrt{1+\frac{|x|^2}{\varphi^2(|x|)}\Bigl(\varphi^{\prime2}(|x|)-\frac{\varphi^2(|x|)}{|x|^2}\Bigr)\frac{|x^\prime|^2}{|x|^2}} \\
\!\!\!&\leq&\!\!\! \Bigl(\frac{\varphi(|x|)}{|x|}\Bigr)^{n-2}\varphi^\prime(|x|)\leq\varphi^\prime(|x|)\leq1+\sigma+2^n(n-1)\sigma\,. \nonumber
\end{eqnarray}
To estimate $I_{2,h}$, we use the area formula for maps between rectifiable sets (\cite[Theorem~2.91]{AFP}), and we get
\begin{eqnarray*}
I_{2,h}\!\!\!&=&\!\!\!P(E_h,{\overline B}_r)-P({\widetilde E}_h,{\overline B}_r)=\int_{\partial^*E_h\cap{\overline B}_r}\!d\H^{n-1}-\int_{\partial^*E_h\cap{\overline B}_r}\!J_{n-1}T_{h,x}\,d\H^{n-1} \\
\!\!\!&=&\!\!\!\int_{\partial^*E_h\cap{\overline B}_r\setminus B_{r/2}}\!\left(1-J_{n-1}T_{h,x}\right)\,d\H^{n-1}+\int_{\partial^*E_h\cap B_{r/2}}\!\left(1-J_{n-1}T_{h,x}\right)\,d\H^{n-1}\,.
\end{eqnarray*}
Notice that the last integral in the above formula is nonnegative since $\Phi$ is a contraction in $B_{r/2}$, hence $J_{n-1}T_{h,x}<1$ in $B_{r/2}$, while from \eqref{unodieci} and \eqref{unouno} we have
$$
\int_{\partial^*E_h\cap{\overline B}_r\setminus B_{r/2}}\!\left(1-J_{n-1}T_{h,x}\right)\,d\H^{n-1}\geq-2^nnP(E_h,{\overline B}_r)\sigma\geq-2^nn\Theta\sigma\,,
$$
thus concluding that
\begin{equation}\label{unoundici}
I_{2,h}\geq-c(n)\Theta\sigma\,.
\end{equation}
{\bf Step 5.}\,\, {\it Estimate of $I_{3,h}$.} We recall \eqref{unodue}, \eqref{unotre}, \eqref{unosei} to obtain
\begin{eqnarray*}
I_{3,h}\!\!\!&=&\!\!\! \lambda_h\int_{E_h\cap B_r\setminus B_{r/2}}\!\left(J\Phi(x)-1\right)\,dx+\lambda_h\int_{E_h\cap B_{r/2}}\!\left(J\Phi(x)-1\right)\,dx \\
\!\!\!&\geq&\!\!\! \lambda_hC_1(n)\Bigl(\frac{\omega_n}{2^{n+2}}-\e\Bigr)\sigma r^n-\lambda_h\bigl[1-\bigl(1-(2^n-1)\sigma\bigr)^n\bigr]\e r^n \\
\!\!\!&\geq&\!\!\!\lambda_h\sigma r^n\Bigl[C_1(n)\frac{\omega_n}{2^{n+2}}-C_1(n)\e-(2^n-1)n\e\Bigr]\,.
\end{eqnarray*}
Therefore, if we choose $0<\e<\e(n)$, with $\e(n)$ depending only on the dimension, we have that
\begin{equation}\label{ih3}
I_{3,h}\geq\lambda_hC_2(n)\sigma r^n
\end{equation}
for some positive $C_2(n)$.

\medskip

\noindent {\bf Step 6.}\,\, {\it Conclusion of Step 1.}  Estimate \eqref{ih3}, together with \eqref{unoquattro}, \eqref{unonove} and \eqref{unoundici}, yields
$$
{\mathcal I}_{\lambda_h}(u_h,E_h)-{\mathcal I}_{\lambda_h}({\tilde u}_h,{\widetilde E}_h)\geq\lambda_h\sigma C_3r^n-C(n,p,\sigma,\mu)(\Theta+r^n) >0
$$
if $\lambda_h$ is sufficiently large. This contradicts the minimality of $(u_h,E_h)$, thus concluding the proof of the first part of Theorem \ref{uno}.

\medskip

\noindent{\bf Step 7.}
Conversely, if $(u,E)$ is a minimizer of $\mathcal{I}$ and $\lambda_0$ is as determined on Step 1, then for $\lambda>\lambda_0$ Steps 1--5 ensure the existence of a minimizer $(u_\lambda, E_\lambda)$  of $\mathcal{I}_\lambda$ with $|E_\lambda|=d$. Hence, by the minimality,
$$\mathcal{I}(u,E)\le \mathcal{I}(u_\lambda,E_\lambda)= \mathcal{I}_\lambda(u_\lambda,E_\lambda)\le \mathcal{I}_\lambda(u,E)=\mathcal{I}(u,E)$$
i.e.,
$$\mathcal{I}(u,E)= \mathcal{I}_\lambda(u_\lambda,E_\lambda)$$
and so $(u,E)$ also minimizes $\mathcal{I}_\lambda$.
\end{proof}

\bigskip

\section{Proof of Theorem \ref{duedue}}

\bigskip
 This section is devoted to the proof of our first regularity result, stated in Theorem  \ref{duedue}. The proof is obtained by establishing that the bulk energy and the perimeter of the free interface both decay on balls of radius $\rho$ as $\rho^{n-1}$, for  $\rho\to 0^+$. We divide it in two steps: In the first  we  prove the decay estimate for the perimeter, and in the second we address the decay of the bulk energies.

\begin{proof}[Proof of Theorem \ref{duedue}] Let $(u,E)$ be a solution of the problem  (P).

\medskip

\noindent
{\bf Step 1.}\,\,{\it First decay estimate.}  Fix $x_0\in \Omega$  and let $R\le \mathrm{dist}(x_0,\partial \Omega).$ Assume, without loss of generality that $0<R<1$.
 Here we want to prove that  there exists a constant $c_0=c_0(n,p,\lambda_0,\alpha,\beta,\ell,L)$ such that
 \begin{equation}\label{decay2a}
\int_{B_r(x_0)\cap E} |\nabla u|^p\,dx+P(E,B_r(x_0))\le  c_0 r^{n-1}\,,\end{equation}
for every $0<r<R$.

\noindent First, consider $x_0\in \partial E\cap\Omega$ and set $\widetilde E:= E\setminus B_r(x_0)$ where $0<r<R$.  For $\lambda_0$ determined in Theorem \ref{uno}, we have
$${\mathcal I_{\lambda_0}}(u,E)\le {\mathcal I_{\lambda_0}}(u,\widetilde E),$$
i.e.,
\begin{eqnarray*}
& &\int_\Om \Big(F(\nabla u)+\chi_{_{E}}G(\nabla u)\Big)\,dx+P(E,\Om)\cr\cr
&\le& \int_\Om \Big(F(\nabla u)+\chi_{_{\widetilde E}}G(\nabla u)\Big)\,dx+P(\widetilde E,\Om)\, + \lambda_0|\,|\widetilde E|\,-\,d\,|\,.
\end{eqnarray*}
Therefore,
\begin{equation*}
\int_\Om (\chi_{_{E}}- \chi_{_{\widetilde E}})G(\nabla u)\,dx+P(E,B_r(x_0))\le P(B_r(x_0))+ \lambda_0|\,|\widetilde E|\,-\,d\,|\,,
\end{equation*}
and so
\begin{equation*}
\int_{B_r(x_0)} \chi_{_{E}}G(\nabla u)\,dx+P(E,B_r(x_0))\le c(n) r^{n-1}+ c(n)\lambda_0r^n\le c(n,\lambda_0) r^{n-1}\,,\end{equation*}
since $r<1$. Lemma \ref{grobelow} implies that
\begin{equation*}
\alpha\frac{\ell}{2}\int_{B_r(x_0)} \chi_{_{E}}|\nabla u|^p\,dx-c(p,\mu,\alpha,\beta,\ell,L)|B_r(x_0)\cap E|+P(E,B_r(x_0))\le c(n,\lambda_0)  r^{n-1}\,,\end{equation*}
or, equivalently,
\begin{eqnarray*}
\alpha\frac{\ell}{2}\int_{B_r(x_0)} \chi_{_{E}}|\nabla u|^p\,dx+P(E,B_r(x_0))&\le& c(n,\lambda_0)  r^{n-1}+c(p,\mu,\alpha,\beta,\ell,L)|B_r(x_0)\cap E|\cr\cr
&\le& c(n,p,\mu,\alpha,\beta,\ell,L,\lambda_0)r^{n-1}\,.\end{eqnarray*}
Therefore
\begin{equation*}
\min\left\{\alpha\frac{\ell}{2},1\right\}\left[\int_{B_r(x_0)} \chi_{_{E}}|\nabla u|^p\,dx+P(E,B_r(x_0))\right]\le c(n,p,\mu,\alpha,\beta,\ell,L,\lambda_0)r^{n-1}\,.\end{equation*}
This inequality yields that
\begin{equation}\label{decay2}
\int_{B_r(x_0)\cap E} |\nabla u|^p\,dx+P(E,B_r(x_0))\le  c_0 r^{n-1}\,,\end{equation}
where we set $c_0:=c(n,p,\lambda_0,\alpha,\beta,\ell,L)$.

\noindent  If $x_0\not\in \partial E\cap\Omega$, or $B_r(x_0)\cap  E$ is not empty and we argue exactly as before, or $B_r(x_0)\subset \Omega\setminus E$ and  estimate \eqref{decay2a} is trivially satisfied.
\medskip

\noindent{\bf Step 2.}\,\,{\it Second decay estimate.}  Here we want to prove that there exist $\tau\in\left(0,\frac{1}{2}\right)$ and $\delta\in (0,1) $ such that for every $M>0$ there exists $h_0\in \mathbb{N}$ such that
  $\forall B(x_0,r)\subset\Omega$ we have
\begin{equation}\label{decay3} \int_{B_{r}(x_0)}|\nabla u|^p\,dx\le h_0 r^{n-1}\qquad \mathrm{or}\qquad
\int_{B_{\tau r}(x_0)}|\nabla u|^p\,dx\le M\tau^{n-\delta}\int_{B_{r}(x_0)}|\nabla u|^p\,dx\,.\end{equation}
In order to prove \eqref{decay3}, we argue by contradiction. Fix $ \tau\in (0,1/2)$ , $\delta\in (0,1) $ and choose $M>\tau^{\delta-n}$. Suppose that for every $h\in \mathbb{N}$, there exists a   ball  $ B_{ r_h}(x_h)\subset \Omega$ such that
\begin{equation}\label{decay04}\int_{B_{ r_h}(x_h)}|\nabla u|^p\,dx> h r_h^{n-1}\end{equation}
and
\begin{equation}\label{decay4}
\int_{B_{\tau r_h}(x_h)}|\nabla u|^p\,dx>M \tau^{n-\delta}\int_{B_{r_h}(x_h)}|\nabla u|^p\,dx\,.
\end{equation}
Note that estimates \eqref{decay2} and \eqref{decay04} yield
\begin{equation*}\int_{B_{r_h}(x_h)\cap E}|\nabla u|^p\,dx+P(E,B_{r_h}(x_h))\le c_0 r_h^{n-1}<\frac{c_0}{h}\int_{B_{ r_h}(x_h)}|\nabla u|^p\,dx\,,
\end{equation*}
and so
\begin{equation}\label{decay01}\int_{B_{r_h}(x_h)\cap E}|\nabla u|^p\,dx<\frac{c_0}{h}\int_{B_{ r_h}(x_h)}|\nabla u|^p\,dx\,.
\end{equation}
\noindent{\bf Substep 2.a.}\,\,{\it  Blow-up.} \qquad
Set
$$\varsigma_h^p=\medint_{B_{r_h}(x_h)}|\nabla u|^p\,dx$$
and,  for $y\in B_1(0)$,  introduce the sequence of rescaled functions defined as
$$v_h(y):=\frac{u(x_h+r_h y)-a_h}{\varsigma_h r_h},\,\,\,\,\mathrm{where}\,\,\,\,
a_h:=\medint_{B_{r_h}(x_h)} u(x)\,dx\,.$$
We have $$\nabla  v_h(y)=\frac{1}{\varsigma_h}\nabla  u(x_h+r_h y)$$
and a  change of variable yields
\begin{equation}\label{bl1}
\medint_{B_1}|\nabla v_h(y)|^p\,dy=\frac{1}{\varsigma_h^p}\medint_{B_{r_h}(x_h)}|\nabla u(x)|^p\,dx=1.
\end{equation}
Therefore, there exist a subsequence of $v_h$ (not relabeled) and $v\in W^{1,p}(B_1)$ such that
$$v_h\rightharpoonup v\qquad\mathrm{weakly\,\,in\,\,}W^{1,p}(B_1)\,, \,\,\mathrm{and}\, \, v_h\to v\qquad\mathrm{strongly\,\,in\,\,}L^{p}(B_1).$$
Moreover, the lower semicontinuity of the norm implies
\begin{equation}\label{norma1}
\medint_{B_1}|\nabla v(y)|^p\,dy\le \liminf_{h\to \infty}\medint_{B_1}|\nabla v_h(y)|^p\,dy=1.
\end{equation}
\noindent{\bf Substep 2.b.}\,\,{\it  We claim that $v_h\to v$ in $W^{1,p}_{loc}(B_1)$.} \qquad
Consider the sets
$$E^*_h:= \frac{E-x_h}{r_h}\cap B_1.$$
By \eqref{decay01}, and up to the extraction of a subsequence (not relabeled), $\chi_{_{E^*_h}}\to \chi_{_{E^*}}$ in $L^1$ (and weakly in $BV(B_1)$) for some set of finite perimeter $E^*\subset B_1$.

\noindent Using the minimality of $(u,E)$ with respect to $(u+\varphi,E)$, for $\varphi\in W^{1,p}_0(B_{r_h}(x_h))$ we obtain
$$\int_{B_{r_h}(x_h)}\!\!\!\Big(F(\nabla u(x))+\chi_{_{E}}G(\nabla u(x))\Big)\,dx\le \int_{B_{r_h}(x_h)}\!\!\!\Big(F(\nabla u(x)+\nabla \varphi(x))+\chi_{_{E}}G(\nabla u(x)+\nabla\varphi(x))\Big)\,dx,$$
or, equivalently, using the change of variable $x=x_h+r_h y$, we get
\begin{eqnarray}\label{decay5}& &\int_{B_1}\Big(F(\varsigma_h\nabla v_h(y))+\chi_{_{E^*_h}}G(\varsigma_h\nabla v_h(y))\Big)\,dy\cr\cr
&\le& \int_{B_1}\Big(F(\varsigma_h\nabla v_h(y)+\nabla \psi( y))+\chi_{_{E^*_h}}G(\varsigma_h\nabla v_h(y)+\nabla\psi ( y))\Big)\,dx\end{eqnarray}
for every $\psi\in W^{1,p}_0(B_1)$. Let $\eta\in C^\infty_0(B_1)$, $0\le \eta\le 1$.  Choosing $\psi_h(y)=\varsigma_h\eta (v-v_h)$ as test function in \eqref{decay5}, we get
\begin{eqnarray}\label{decay611}& &\int_{B_1}\Big(F(\varsigma_h\nabla v_h(y))+\chi_{_{E^*_h}}G(\varsigma_h\nabla v_h(y))\Big)\,dy\cr\cr
&\le& \int_{B_1}\Big(F\big(\varsigma_h\eta\nabla v( y)+ \varsigma_h(1-\eta)\nabla v_h( y)+\nabla\eta\varsigma_h(v-v_h)\big)\Big)\,dy\cr\cr
&+&\int_{B_1}\chi_{_{E^*_h}}\Big(G\big(\varsigma_h\eta\nabla v( y)+ \varsigma_h(1-\eta)\nabla v_h( y)+\nabla\eta\varsigma_h(v-v_h)\big)\Big)\,dy\cr\cr
&\le& \int_{B_1}\Big(F\big(\varsigma_h\eta\nabla v( y)+ \varsigma_h(1-\eta)\nabla v_h( y)\big)\Big)\,dy+\int_{B_1}\chi_{_{E^*_h}}\Big(G\big(\varsigma_h\eta\nabla v( y)+ \varsigma_h(1-\eta)\nabla v_h( y)\big)\Big)\,dy\cr\cr
&+&\int_{B_1}\Big\langle D_\xi F\big(\varsigma_h\eta\nabla v( y)+ \varsigma_h(1-\eta)\nabla v_h( y)+\nabla\eta\varsigma_h(v-v_h)\big),\nabla\eta\varsigma_h(v-v_h)\Big\rangle\,dy\cr\cr
&+&\int_{B_1}\chi_{_{E^*_h}}\Big\langle D_\xi G\big(\varsigma_h\eta\nabla v( y)+ \varsigma_h(1-\eta)\nabla v_h( y)+\nabla\eta\varsigma_h(v-v_h)\big),\nabla\eta\varsigma_h(v-v_h)\Big\rangle\,dy\cr\cr
&\le& \int_{B_1}\Big(F\big(\varsigma_h\eta\nabla v( y)+ \varsigma_h(1-\eta)\nabla v_h( y)\big)\Big)\,dy+\int_{B_1}\chi_{_{E^*_h}}\Big(G\big(\varsigma_h\eta\nabla v( y)+ \varsigma_h(1-\eta)\nabla v_h( y)\big)\Big)\,dy\cr\cr
&+&c\int_{B_1}\big(\mu^2+|\varsigma_h\nabla v_h|^2+|\varsigma_h\nabla v|^2+|\varsigma_h(v-v_h)|^2\big)^{\frac{p-1}{2}}|\varsigma_h(v-v_h)|\,dy
\end{eqnarray}
where, in the last inequality, we used  \eqref{(H4)}. Hence, using  H\"older's inequality and the convexity of $F$ and $G$  in estimate \eqref{decay611}, we obtain
\begin{eqnarray}\label{decay6}
& &\int_{B_1}\Big(F(\varsigma_h\nabla v_h(y))+\chi_{_{E^*_h}}G(\varsigma_h\nabla v_h(y))\Big)\,dy\nonumber\\
&\le&\int_{B_1}\Big((1-\eta)F(\varsigma_h\nabla v_h( y))\,dy+\eta F(\varsigma_h\nabla v( y))\Big)\,dy\nonumber\\
&+&
\int_{B_1}\chi_{_{E^*_h}}\Big((1-\eta)G(\varsigma_h\nabla v_h( y))+\eta G(\varsigma_h\nabla v( y))\Big)\,dy\\
&+&c\int_{B_1}|\varsigma_h(v-v_h)|^p\,dx+\varsigma_h^p\left(\int_{B_1}\mu^p+|\nabla v_h|^p+|\nabla v|^p\,dx\right)^{\frac{p-1}{p}}\!\!\left(\int_{B_1}|v-v_h|^p\,dx\right)^{\frac{1}{p}}\!\!, \nonumber\end{eqnarray}
since we may suppose that $\varsigma_h>1$ for $h$ large. In fact, by \eqref{decay04} and the definition of $\varsigma_h$, we have
\begin{equation}\label{sigma}\varsigma_h^p\ge \frac{h}{r_h},\end{equation} and so $\varsigma_h\to +\infty$ as $h\to +\infty$.
By virtue of  \eqref{bl1}, from estimate \eqref{decay6} we infer that
\begin{eqnarray}\label{decay60}
&  &\int_{B_1}\eta\Big(F(\varsigma_h\nabla v_h(y))+\chi_{_{E^*_h}}G(\varsigma_h\nabla v_h(y))\Big)\,dy\cr\cr
&\le& \int_{B_1}\Big(\eta F(\varsigma_h\nabla v( y))\Big)\,dy+\int_{B_1}\chi_{_{E^*_h}}\Big(\eta G(\varsigma_h\nabla v( y))\Big)\,dy\cr\cr
&+&c\varsigma_h^p\int_{B_1}|v-v_h|^p+c\varsigma_h^p\left(\int_{B_1}|v-v_h|^p\,dx\right)^{\frac{1}{p}}.
\end{eqnarray}
Note that, by changing variable in \eqref{decay01}, we have
$$
r_h^n\int_{B_1\cap E^*_h}|\nabla u (x_h+r_h y)|^p\,dy< \frac{c_0}{h}r_h^n\int_{B_1}|\nabla u (x_h+r_h y)|^p\,dy\,,$$
and thus, by the definition of $v_h$,
$$\int_{B_1\cap E^*_h}|\nabla v_h ( y)|^p\,dy<  \frac{c_0}{h}\int_{B_1}|\nabla v_h ( y)|^p\,dy$$
and, by the use of \eqref{bl1}, we get
\begin{equation}\label{bl2} \int_{B_1\cap E^*_h}|\nabla v_h ( y)|^p\,dy<  \frac{c_0\omega_n}{h} .\end{equation}
Since  $\chi_{_{E^*_h}}\to \chi_{_{E^*}}$ weakly in $BV(B_1)$, by Fatou's Lemma and \eqref{bl2} we obtain
\begin{equation}\label{decay7}
\int_{B_1\cap E^*}|\nabla v( y)|^p\,dy\le \liminf_h\int_{B_1\cap E^*_h}|\nabla v_h ( y)|^p\,dy=0.
\end{equation}
Using assumption  (G1) and the homogeneity of $F$ in \eqref{decay60}, we get
\begin{eqnarray*}
\int_{B_1}\eta \varsigma^{p}_h F(\nabla v_h(y))\,dy&\le&
\int_{B_1}\eta \varsigma_h^pF(\nabla v( y))\,dy+c\int_{B_1}\chi_{_{E^*_h}}\big( |\varsigma_h\nabla v( y)|^p+|\varsigma_h\nabla v_h( y)|^p\big)\,dy\cr\cr
&+&c\varsigma_h^p\int_{B_1}|v-v_h|^p\,dy+c\varsigma_h^p\left(\int_{B_1}|v-v_h|^p\,dy\right)^{\frac{1}{p}},
\end{eqnarray*}
i.e.,
\begin{eqnarray}\label{decay61}
\int_{B_1}\eta F(\nabla v_h(y))\,dy&\le&
\int_{B_1}\eta F(\nabla v( y))\,dy+\int_{B_1}\chi_{_{E^*_h}}( |\nabla v( y)|^p+|\nabla v_h( y)|^p)\,dy\cr\cr
&+&c\int_{B_1}|v-v_h|^p\,dy+c\left(\int_{B_1}|v-v_h|^p\,dy\right)^{\frac{1}{p}}.
\end{eqnarray}
Passing to the limit as $h\to +\infty$ in \eqref{decay61}, by virtue of \eqref{decay7}, the strong convergence of $v_h$ to $v$ in $L^p$ and the lower semicontinuity of $F$, we obtain
$$\int_{B_1}\eta F(\nabla v(y))\,dy\le \liminf_h \int_{B_1}\eta F(\nabla v_h(y))\,dy\le \int_{B_1}\eta F(\nabla v( y))\,dy,$$
that is, \begin{equation}\label{strong}\lim_h \int_{B_1}\eta F(\nabla v_h(y))\,dy=\int_{B_1}\eta F(\nabla v(y))\,dy\,.\end{equation}
By the strong p-convexity of $F$ and Lemma \ref{V}, we have
\begin{eqnarray}\label{strong1}& \int_{B_1}\eta |V(\nabla v_h(y))-V(\nabla v(y))|^2\,dy\\
&\le c(p,\ell)\int_{B_1}\eta \Big(F(\nabla v_h(y))-F(\nabla v(y))\Big)-\langle D_\xi F(\nabla v_h(y)),\eta(\nabla v_h(y)-\nabla v(y))\rangle\,dy.\nonumber
\end{eqnarray}
By the minimality of $(u,E)$, we get
$$\int_{B_{r_h}(x_h)}\Big\langle D_\xi F(\nabla u(y))+\chi_{_{E}}D_\xi G(\nabla u(y)),\nabla \varphi\Big\rangle\, dx=0$$
for every $\varphi\in W^{1,p}_0(B_{r_h}(x_h))$ or, equivalently,
$$\int_{B_1}\Big\langle D_\xi F(\varsigma_h\nabla v_h(y))+\chi_{_{E^*_h}}D_\xi G(\varsigma_h\nabla v_h(y)),\nabla \psi\Big\rangle\, dx=0$$
for every $\psi\in W^{1,p}_0(B_1)$, or still
\begin{equation}\label{strong2}
\int_{B_1}\Big\langle D_\xi F(\varsigma_h\nabla v_h(y)),\nabla\psi\Big\rangle\,dy=-\int_{B_1}\Big\langle \chi_{_{E^*_h}}D_\xi G(\varsigma_h\nabla v_h(y)),\nabla \psi\Big\rangle\, dx\,.
\end{equation}
Then, choosing $\psi:=\eta(v_h-v)$ with $\eta\in C^\infty_0(B_1)$ as test function in \eqref{strong2}, we obtain
\begin{eqnarray}\label{strong3}
& &\int_{B_1}\Big\langle D_\xi F(\varsigma_h\nabla v_h(y)),\eta(\nabla v_h-\nabla v)\Big\rangle\,dy\cr\cr
&=&-\int_{B_1}\Big\langle D_\xi F(\varsigma_h\nabla v_h(y)),\nabla \eta( v_h- v)\Big\rangle\,dy\cr\cr
& &-\int_{B_1}\chi_{_{E^*_h}}\Big\langle D_\xi G(\varsigma_h\nabla v_h(y)),\eta(\nabla v_h-\nabla v)\Big\rangle\,dy\cr\cr
& &-\int_{B_1}\chi_{_{E^*_h}}\Big\langle D_\xi G(\varsigma_h\nabla v_h(y)),\nabla\eta( v_h- v)\Big\rangle\,dy\,.
\end{eqnarray}
Using estimates \eqref{(H4)} for $D_\xi F$ and $D_\xi G$,  \eqref{strong3} yields
 \begin{eqnarray}\label{strong30}
& &\left|\int_{B_1}\Big\langle D_\xi F(\varsigma_h\nabla v_h(y)),\eta(\nabla v_h-\nabla v)\Big\rangle\,dy\right|\cr\cr
&\le & c(p,\beta,L)\int_{B_1}|\varsigma_h\nabla v_h(y)|^{p-1}|\nabla \eta|| v_h- v|\,dy\cr\cr
&+&c(p,\beta,L) \int_{B_1\cap E^*_h}|\varsigma_h\nabla v_h(y)|^{p-1}|\eta||\nabla v_h-\nabla v|\,dy.
\end{eqnarray}
By the homogeneity of $F$, H\"older's inequality, \eqref{bl1} and \eqref{bl2},  \eqref{strong30} implies that
\begin{eqnarray}\label{strong4}
& &\left|\int_{B_1}\Big\langle D_\xi F(\nabla v_h(y)),\eta(\nabla v_h-\nabla v)\Big\rangle\,dy\right|\cr\cr
&\le & c(p,\beta,L,||\nabla \eta||_\infty)\left(\int_{B_1}|\nabla v_h(y)|^{p}\,dy\right)^{\frac{p-1}{p}}\left(\int_{B_1}| v_h- v|^p\,dy\right)^{\frac{1}{p}}\cr\cr
&+&c(p,\beta,L,|| \eta||_\infty) \left(\int_{B_1\cap E^*_h}|\nabla v_h(y)|^{p}\,dy\right)^{\frac{p-1}{p}}\left(\int_{B_1}|\nabla v_h|^p+|\nabla v|^p\,dy\right)^{\frac{1}{p}}\cr\cr
&\le & c(n,p,\beta,L,||\nabla \eta||_\infty)\left(\int_{B_1}| v_h- v|^p\,dy\right)^{\frac{1}{p}}+c(p,\beta,L,|| \eta||_\infty) \left(\frac{c_0}{h}\right)^{\frac{p-1}{p}}.
\end{eqnarray}
Since $v_h$ converge strongly to $v$ in $L^p(B_1)$, passing to the limit as $h\to \infty$ in \eqref{strong4}, we get
\begin{equation}\label{strong5}\lim_{h\to+\infty}\left|\int_{B_1}\langle D_\xi F(\nabla v_h(y)),\eta(\nabla v_h-\nabla v)\rangle\,dy\right|=0.\end{equation}
Passing to the limit in \eqref{strong1} and using \eqref{strong} and \eqref{strong5}, we obtain
\begin{equation*} \lim_{h\to+\infty}\int_{B_1}\eta |V(\nabla v_h(y))-V(\nabla v(y))|^2\,dy=0,
\end{equation*}
which, by Lemma \ref{V},   implies that
\begin{equation}\label{strong7} \lim_{h\to+\infty}\int_{B_1}\eta(\mu^2+|\nabla v_h(y)|^2+|\nabla v(y)|^2)^{\frac{p-1}{2}}|\nabla v_h(y)-\nabla v(y)|^2\,dy=0.
\end{equation}
In the case $p\ge 2$, one can easily check that \eqref{strong7} implies
 $$v_h\to v\qquad\mathrm{strongly\,\,in\,\,}W^{1,p}_{\mathrm{loc}}(B_1)\,.$$
 In the case $1<p<2$, it suffices to observe that  H\"older's inequality with exponents $\frac{2}{p}$ and $\frac{2}{2-p}$ yields
\begin{eqnarray}& &\int_{B_1}\!\eta|\nabla v_h-\nabla v|^p\,dx\cr\cr
&\le&\left(\int_{B_1}\eta(\mu^2+|\nabla v_h|^2+|\nabla v|^2)^{\frac{p-2}{2}}|\nabla v_h-\nabla v|^2\,dx\right)^{\frac{p}{2}}\left(\int_{B_1}\eta(\mu^2+|\nabla v_h|^2+|\nabla v|^2)^{\frac{p}{2}}\,dx\right)^{\frac{2-p}{2}}\cr\cr
&\le&c\left(\int_{B_1}\eta(\mu^2+|\nabla v_h|^2+|\nabla v|^2)^{\frac{p-2}{2}}|\nabla v_h-\nabla v|^2\,dx\right)^{\frac{p}{2}}\,,\end{eqnarray}
where we used  \eqref{bl1}.  Hence, also in this case, by \eqref{strong7} we conclude that
$$v_h\to v\qquad\mathrm{strongly\,\,in\,\,}W^{1,p}_{\mathrm{loc}}(B_1)\,,$$
and this asserts the claim.
\medskip

\noindent {\bf Substep 2.c.}\,\,{\it Reaching a contradiction.} Notice that \eqref{decay4} can be written as
\begin{equation*}
\medint_{B_{\tau r_h}(x_h)}|\nabla u|^p\,dx> M\tau^{-\delta}\medint_{B_{r_h}(x_h)}|\nabla u|^p\,dx,
\end{equation*}
or, equivalently,
\begin{equation}\label{conc2}
\frac{1}{\varsigma_h^p}\medint_{B_{\tau r_h}(x_h)}|\nabla u|^p\,dx> M\tau^{-\delta}\,,
\end{equation}
by the definition of $\varsigma_h$. By the change of variable $x=x_h+r_h y$ and the definition of $v_h$, from \eqref{conc2} we infer that
\begin{equation*}
\frac{1}{\varsigma_h^p}\medint_{B_{\tau }}|\varsigma_h\nabla v _h|^p\,dx> M\tau^{-\delta},
\end{equation*}
i.e.,
\begin{equation}\label{conc30}
\medint_{B_{\tau }}|\nabla v _h|^p\,dx>M \tau^{-\delta}.
\end{equation}
By virtue of the strong convergence of $v_h$ to $v$ in $W^{1,p}_{\mathrm{loc}}(B_1)$ and \eqref{norma1}, we have that
\begin{equation}\label{conc4}
\lim_h \medint_{B_{\tau }}|\nabla v _h|^p= \medint_{B_{\tau }}|\nabla v |^p\le \frac{1}{\tau^n}\,.
\end{equation}
 Clearly, \eqref{conc4} contradicts \eqref{conc30} because $M>\tau^{\delta-n}$.

 \medskip

\noindent {\bf Step 3.}\,\,{\it  Conclusion.}\,\,
We conclude that if $(u,E)$ is a solution of (P), then  there exist $\tau\in\left(0,\frac{1}{2}\right)$ and $\delta\in (0,1)$ such that, setting $M=1$, there exists $h_0\in \mathbb{N}$ with the property that whenever   $B_r(x)\subset \Omega$, then
\begin{equation*}
\int_{B_{ r}(x_0)}|\nabla u|^p\,dx\le h_0r^{n-1}\qquad \mathrm{or}\qquad \int_{B_{\tau r}(x_0)}|\nabla u|^p\,dx\le \tau^{n-\delta}\int_{B_{r}(x_0)}|\nabla u|^p\,dx\,.
\end{equation*}
Hence,
\begin{equation*}
\int_{B_{\tau r}(x_0)}|\nabla u|^p\,dx\le \tau^{n-\delta}\int_{B_{r}(x_0)}|\nabla u|^p\,dx
+h_0r^{n-1} \,,
\end{equation*}
and using Lemma \ref{iter} with $\varphi(\rho):=\int_{B_{\rho}(x_0)}|\nabla u|^p\,dx$, $\gamma=n-\delta$ and $\beta=n-1$, we obtain that
 \begin{equation*}
\int_{B_{ \rho}(x_0)}|\nabla u|^p\,dx\le c\left\{ \left(\frac{\rho}{r}\right)^{n-1}\int_{B_{ r}(x_0)}|\nabla u|^p\,dx
+h_0\rho^{n-1} \right\}\,,
\end{equation*}
for every $0<\rho<r\le R$, and so
 \begin{equation*}
\int_{B_{ \rho}(x_0)}|\nabla u|^p\,dx\le C \rho^{n-1}\,.
\end{equation*}
By H\"older's inequality
$$\int_{B_{ \rho}(x_0)}|\nabla u|\,dx\le c\left(\int_{B_{ \rho}(x_0)}|\nabla u|^p\,dx\right)^{\frac{1}{p}}  \rho^{\frac{n}{p'}}\le C \rho^{\frac{n-1}{p}+\frac{n}{p'}}= C\rho^{n-\frac{1}{p}}\,.$$
  Theorem \ref{Morrey} yields that, at least, $u$ is locally H\"older continuous with exponent $\frac{1}{p'}$. The regularity of the boundary at this point can be obtained arguing as in \cite[Theorem 2.2]{AB}, with the obvious modifications.
\end{proof}

\section{Proof of Theorem \ref{due}  -- Full regularity}

\bigskip

This section is devoted to the proof of the full regularity result stated in Theorem \ref{due}. The key point is to prove that if the ratio $\frac{\beta}{\alpha+1}$, where $\alpha$ and $\beta$ are the parameters appearing in hypotheses (G1) and (G2), is sufficiently small  then $\int_{B_\rho}|\nabla u|^p$ decays as $\rho^{n-1+\delta}$.

\begin{proof}[Proof of Theorem~\ref{due}]\,\,{\bf Step 1.} Let $(u,E)$ be a minimal configuration of problem (P). We first show that $u\in C^{0,1/p'+\delta}_{\mathrm{loc}}(\Omega)$ for some positive $\delta$, with $p'=\frac{p}{p-1}$. Fix $x\in\Omega$ and a ball $B_r(x)\subset\!\subset\Om$. Assume, without  loss of generality, that $x=0$ and $r<1$. In what follows, we will omit the dependence on the center simply denoting by $B_r$ the ball $B_r(0)$.
By Theorem \ref{uno}, we have that $(u,\,E)$  is a minimizer of  problem \eqref{unozero} for $\lambda$ sufficiently large.
 Let  $v$ be the minimizer of $$w\in W^{1,p}(B_r)\mapsto\displaystyle{\int_{B_r}(F+G)(\nabla w)\,dx},$$  satisfying the boundary condition  $v=u$ on $\partial B_r$.  Then \begin{equation}\label{ELu}
\int_{B_r}\!\,\Big(D_\xi F(\nabla u)+\chi_{_{E}}D_\xi G(\nabla u)\Big)\cdot \nabla\varphi\, dx\,=\,0
\end{equation}
and
 \begin{equation}\label{ELv}
\int_{B_r}D_\xi(F+G)(\nabla v)\cdot \nabla\varphi \,dx\,=\,0
\end{equation}
for all $\varphi\in W^{1,p}_0(B_r)$.
Note that assumptions  (F1)-(F2) and (G1)-(G2) imply that the integrand $F+G$ satisfies
$$
0\le (F+G)(\xi)\le \widetilde L (\mu^2+| \xi|^2 )^{\frac{p}{2}},   \leqno{\rm (H1)}
$$
$$
\int_{\Omega}(F+G)(\xi+\nabla \varphi)\, dx \ge \int_{\Omega}\Big( (F+G)(\xi) +\widetilde\ell (\mu^2+| \xi|^2 +|\nabla\varphi|^2)^{\frac{p-2}{2}}|\nabla\varphi|^2\Big)\,dx,\leqno{\rm (H2)}
$$
and (see \eqref{deriv})
$$\langle D_\xi (F+G)(\xi)-D_\xi (F+G)(\eta),\xi-\eta\rangle\ge c(p)\widetilde\ell |V(\xi)-V(\eta)|^2,\leqno{\rm (H3)}$$
with growth and coercivity  constants $\widetilde L$, $\widetilde \ell$ such that
$$\tilde L\le (\beta+1) L\qquad \mathrm{and} \qquad \tilde \ell\ge (\alpha+1)\ell\,.$$
By virtue of (H1) and (H2), we can apply Theorem \ref{fofu} and \eqref{cost} to $H=F+G$, to obtain that for all $0<\ro<\frac{{r}}{2}$
\begin{eqnarray}\label{sup}
\int_{B_\ro}\!(\mu^2+|\nabla v|^2)^\frac{p}{2}\,dx&\leq & |B_\ro|\,\sup_{B_\ro}(\mu^2+|\nabla v|^2)^\frac{p}{2}\le c_n \ro^n \, \sup_{B_\frac{r}{2}}(\mu^2+|\nabla v|^2)^\frac{p}{2}\cr\cr
&\le & c \left(\frac{\widetilde L}{\widetilde \ell}\right)^{\sigma} \Bigl(\frac{\ro}{r}\Bigr)^n\int_{B_r}(\mu^2+|\nabla v|^2)^\frac{p}{2}\,dx\,,
\end{eqnarray}
for some constants $c=c(n,p)\ge 1$ and $\sigma=\frac{2n}{p}$. On the other hand if $\frac{{r}}{2}\le \ro<r$, one easily gets that
$$\int_{B_\ro}\!(\mu^2+|\nabla v|^2)^\frac{p}{2}\,dx\leq 2^n\frac{\ro^n}{r^n} \int_{B_r}(\mu^2+|\nabla v|^2)^\frac{p}{2}\,dx\,.$$
Therefore estimate \eqref{sup} holds for every $0<\ro<r$.
Subtracting  \eqref{ELv}  from  \eqref{ELu}, we obtain

\begin{equation*}
\int_{B_r}\!\Big(D_\xi (F+G)(\nabla u)-D_\xi (F+G)(\nabla v)\Big)\cdot \nabla\varphi \, dx -\int_{B_r\setminus E}\!D_\xi G(\nabla u)\cdot \nabla\varphi \, dx\, =0,
\end{equation*}
or, equivalently,

\begin{equation}\label{diff2}
\int_{B_r}\!\Big(D_\xi (F+G)(\nabla u)-D_\xi (F+G)(\nabla v)\Big)\cdot \nabla\varphi \, dx =\int_{B_r\setminus E}\!D_\xi G(\nabla u)\cdot \nabla\varphi \, dx\,.
\end{equation}
Next, we  treat separately the cases $p\ge 2$ and $1<p<2$.

\bigskip

\noindent {\it Case $p\ge 2$.} \qquad
Set  $\varphi :=u-v$ in  \eqref{diff2}. In \eqref{diff2} we use  (H3) and Lemma \ref{V} in the left hand side, the second condition in \eqref{(H4)} and H\"older's inequality in the right hand side, thus obtaining
\begin{eqnarray*}
&&c_{p,n}\widetilde\ell\int_{B_r}\!(\mu^2+|\nabla u|^2+|\nabla v|^2)^{\frac{p-2}{2}}|\nabla u-\nabla v|^2\,dx\leq c_p\beta L\int_{B_r}\!(\mu^2+|\nabla u|^2)^{\frac{p-1}{2}}|\nabla u-\nabla v|\,dx\cr\cr
&\le&c_p\beta L\left(\int_{B_r}\!(\mu^2+|\nabla u|^2)^{\frac{p}{2}}\, dx\right)^{\frac{1}{2}}\left(\int_{B_r}\!(\mu^2+|\nabla u|^2)^{\frac{p-2}{2}}|\nabla u-\nabla v|^2\,dx\right)^{\frac{1}{2}}\cr\cr
&\le &c_p\beta L\left(\int_{B_r}\!(\mu^2+|\nabla u|^2)^{\frac{p}{2}}\, dx\right)^{\frac{1}{2}} \left(\int_{B_r}\!(\mu^2+|\nabla u|^2+|\nabla v|^2)^{\frac{p-2}{2}}|\nabla u-\nabla v|^2\,dx\right)^{\frac{1}{2}}
\end{eqnarray*}
where, in the last inequality, we used that $p\ge 2$.
Hence
\begin{equation}\label{diff4}
\int_{B_r}\!(\mu^2+|\nabla u|^2+|\nabla v|^2)^{\frac{p-2}{2}}|\nabla u-\nabla v|^2\,dx\leq c_{p,n} \left(\frac{\beta L}{\widetilde\ell
}\right)^{2}\int_{B_r}\!(\mu^2+|\nabla u|^2)^{\frac{p}{2}}\, dx\, .
\end{equation}
By virtue of \eqref{diff4}, one  has that for $0<\rho<r$
\begin{eqnarray}\label{diff40}\int_{B_\ro}\!|\nabla u-\nabla v|^p\,dx&\le& c_p\int_{B_\ro}(\mu^2+|\nabla u|^2+|\nabla v|^2)^{\frac{p-2}{2}}|\nabla u-\nabla v|^2\,dx\cr\cr
&\leq& c_{p,n}\left(\frac{\beta L}{\widetilde\ell
}\right)^{2}\int_{B_r}\!(\mu^2+|\nabla u|^2)^{\frac{p}{2}}\, dx\, ,\end{eqnarray}
 therefore, from \eqref{sup} and \eqref{diff40}, we  get
\begin{eqnarray}\label{diff5}
\biggl(\int_{B_\ro}\!|\nabla u|^p\,dx\biggr)^{1/p}\!\!\!&\leq&\!\!\!\biggl(\int_{B_\ro}\!|\nabla u-\nabla v|^p\,dx\biggr)^{1/p}+\biggl(\int_{B_\ro}\!|\nabla v|^p\,dx\biggr)^{1/p} \cr\cr
 \!\!\!&\leq& c_{n,p} \left(\frac{\beta L}{\widetilde\ell
}\right)^{\frac{2}{p}}\left(\int_{B_r}\!(\mu^2+|\nabla u|^2)^{\frac{p}{2}}\, dx\right)^{1/p}\cr\cr
&+&c_{n,p}\left(\frac{\widetilde L}{\widetilde\ell}\right)^{\frac{\sigma}{p}}\Bigl(\frac{\ro}{r}\Bigr)^{n/p}
\biggl(\int_{B_r}\!(\mu^2+|\nabla v|^2)^{\frac{p}{2}}\,dx\biggr)^{1/p}.
\end{eqnarray}
By  Lemma \ref{grobelow} applied for $H=F+G$ and by the minimality of $v$, we have
\begin{eqnarray}\label{due4}
\int_{B_r}\!(\mu^2+|\nabla v|^2)^{\frac{p}{2}}\,dx&\leq&
 \frac{2}{\widetilde \ell}\int_{B_r}\!(F+G)(\nabla v)\,dx+c(n,p,\widetilde L,\widetilde \ell,\mu)r^n \cr\cr
  &\leq& \frac{2}{\widetilde \ell}\int_{B_r}\!(F+G)(\nabla u)\,dx+c(n,p,\widetilde L,\widetilde \ell,\mu)r^n \cr\cr
&\leq& \frac{2\widetilde L}{\widetilde \ell} \int_{B_r}(\mu^2+|\nabla u|^2)^{\frac{p}{2}}\,dx +c(n,p,\widetilde L,\widetilde \ell,\mu)r^n\;,
\end{eqnarray}
where in last line we used the growth assumption (H1). Combining \eqref{diff5} and \eqref{due4}, we obtain, for all $0<\ro<r$, that
\begin{eqnarray*}
\left(\int_{B_\ro}\!|\nabla u|^p\,dx\right)^{\frac{1}{p}}&\leq& c_{n,p} \left(\frac{\beta L}{\widetilde\ell
}\right)^{\frac{2}{p}}\left(\int_{B_r}\!(\mu^2+|\nabla u|^2)^{\frac{p}{2}}\, dx\right)^{\frac{1}{p}}\cr\cr
&+&c_{n,p}
\left(\frac{\widetilde L}{\widetilde\ell}\right)^{\frac{\sigma+1}{p}}\Bigl(\frac{\ro}{r}\Bigr)^{\frac{n}{p}}
\left(\int_{B_r}(\mu^2+|\nabla u|^2)^{\frac{p}{2}}\,dx\right)^{\frac{1}{p}}+
c(n,p,\widetilde L,\widetilde \ell,\mu)\ro^{\frac{n}{p}}\cr\cr
&=&c_{n,p}\left[\left(\frac{\beta L}{\widetilde\ell
}\right)^{\frac{2}{p}}+
\left(\frac{\widetilde L}{\widetilde\ell}\right)^{\frac{\sigma+1}{p}}\Bigl(\frac{\ro}{r}\Bigr)^{\frac{n}{p}}\right]
\left(\int_{B_r}(\mu^2+|\nabla u|^2)^{\frac{p}{2}}\,dx\right)^{\frac{1}{p}}\cr\cr
&+&
c(n,p,\widetilde L,\widetilde \ell,\mu)\ro^{\frac{n}{p}}\;,
\end{eqnarray*}
and therefore the following estimate holds
\begin{eqnarray}\label{due4bis}
\int_{B_\ro}\!|\nabla u|^p\,dx&\leq& c_{n,p}\left[\left(\frac{\beta L}{\widetilde\ell
}\right)^{\frac{2}{p}}+
\left(\frac{\widetilde L}{\widetilde\ell}\right)^{\frac{\sigma+1}{p}}\Bigl(\frac{\ro}{r}\Bigr)^{\frac{n}{p}}\right]^p
\int_{B_r}(\mu^2+|\nabla u|^2)^{\frac{p}{2}}\,dx\cr\cr
&+&c(n,p,\widetilde L,\widetilde \ell,\mu)\ro^n\;.
\end{eqnarray}
for every $0<\rho<r$.
\bigskip

\noindent {\it Case $1<p< 2$.} \qquad
As before, we choose  $\varphi =u-v$ in  \eqref{diff2}. In  \eqref{diff2} we use  (H3) and Lemma \ref{V} in the left hand side, the second condition in \eqref{(H4)} and H\"older's inequality in the right hand side to obtain
\begin{eqnarray*}
&&c_{n,p}\widetilde\ell\int_{B_r}\!(\mu^2+|\nabla u|^2+|\nabla v|^2)^{\frac{p-2}{2}}|\nabla u-\nabla v|^2\,dx\leq c_p\beta L\int_{B_r}\!(\mu^2+|\nabla u|^2)^{\frac{p-1}{2}}|\nabla u-\nabla v|\,dx\cr\cr
&\leq& c_p\beta L\int_{B_r}\!(\mu^2+|\nabla u|^2+|\nabla v|^2)^{\frac{p-1}{2}}|\nabla u-\nabla v|\,dx\cr\cr
&\le&c_p\beta L\left(\int_{B_r}\!(\mu^2+|\nabla u|^2+|\nabla v|^2)^{\frac{p}{2}}\, dx\right)^{\frac{1}{2}}\left(\int_{B_r}\!(\mu^2+|\nabla u|^2+|\nabla v|^2)^{\frac{p-2}{2}}|\nabla u-\nabla v|^2\,dx\right)^{\frac{1}{2}}
\end{eqnarray*}
and so
\begin{equation}\label{diff444}
\int_{B_r}\!(\mu^2+|\nabla u|^2+|\nabla v|^2)^{\frac{p-2}{2}}|\nabla u-\nabla v|^2\,dx\leq c_{n,p} \left(\frac{\beta L}{\widetilde\ell
}\right)^{2}\int_{B_r}\!(\mu^2+|\nabla u|^2+|\nabla v|^2)^{\frac{p}{2}}\, dx\, .
\end{equation}
On the other hand, for $1<p<2$, H\"older's inequality with exponents $\frac{2}{p}$ and $\frac{2}{2-p}$ yields
\begin{eqnarray}\label{diff401}& &\int_{B_\ro}\!|\nabla u-\nabla v|^p\,dx\cr\cr
&\le&\left(\int_{B_\ro}(\mu^2+|\nabla u|^2+|\nabla v|^2)^{\frac{p-2}{2}}|\nabla u-\nabla v|^2\,dx\right)^{\frac{p}{2}}\left(\int_{B_\ro}(\mu^2+|\nabla u|^2+|\nabla v|^2)^{\frac{p}{2}}\,dx\right)^{\frac{2-p}{2}}\cr\cr
&\le&c_{n,p}\left(\frac{\beta L}{\widetilde\ell
}\right)^{p}\left(\int_{B_r}\!(\mu^2+|\nabla u|^2+|\nabla v|^2)^{\frac{p}{2}}\, dx\right)^{\frac{p}{2}}\left(\int_{B_\ro}(\mu^2+|\nabla u|^2+|\nabla v|^2)^{\frac{p}{2}}\,dx\right)^{\frac{2-p}{2}}\cr\cr
&=&c_{n,p}\left(\frac{\beta L}{\widetilde\ell
}\right)^{p}\int_{B_r}\!(\mu^2+|\nabla u|^2+|\nabla v|^2)^{\frac{p}{2}}\, dx\,,\end{eqnarray}
where we used  \eqref{diff444}. From \eqref{sup} and \eqref{diff401}, we  get
\begin{eqnarray}\label{diff55}
\biggl(\int_{B_\ro}\!|\nabla u|^p\,dx\biggr)^{1/p}\!\!\!&\leq&\!\!\!\biggl(\int_{B_\ro}\!|\nabla u-\nabla v|^p\,dx\biggr)^{1/p}+\biggl(\int_{B_\ro}\!|\nabla v|^p\,dx\biggr)^{1/p} \cr\cr
 \!\!\!&\leq& c_{n,p} \left(\frac{\beta L}{\widetilde\ell
}\right)\left(\int_{B_r}\!(\mu^2+|\nabla u|^2+|\nabla v|^2)^{\frac{p}{2}}\, dx\right)^{1/p}\cr\cr
&+&c_{n,p}\left(\frac{\widetilde L}{\widetilde\ell}\right)^{\frac{\sigma}{p}}\Bigl(\frac{\ro}{r}\Bigr)^{n/p}
\biggl(\int_{B_r}\!(\mu^2+|\nabla v|^2)^{\frac{p}{2}}\,dx\biggr)^{1/p}\cr\cr
&\le&c_{n,p} \left(\frac{\beta L}{\widetilde\ell
}\right)\left(\int_{B_r}\!(\mu^2+|\nabla u|^2)^{\frac{p}{2}}\, dx\right)^{1/p}\cr\cr
&+&c_{n,p}\left[\left(\frac{\beta L}{\widetilde\ell
}\right)+\left(\frac{\widetilde L}{\widetilde\ell}\right)^{\frac{\sigma}{p}}\Bigl(\frac{\ro}{r}\Bigr)^{n/p}\right]
\biggl(\int_{B_r}\!(\mu^2+|\nabla v|^2)^{\frac{p}{2}}\,dx\biggr)^{1/p}\,.
\end{eqnarray}
By virtue of \eqref{due4}, that holds for all $p>1$, from estimate \eqref{diff55}  we obtain
\begin{eqnarray}\label{diff5555}
\biggl(\int_{B_\ro}\!|\nabla u|^p\,dx\biggr)^{1/p}&\le&c_{n,p} \left(\frac{\beta L}{\widetilde\ell
}\right)\left(\int_{B_r}\!(\mu^2+|\nabla u|^2)^{\frac{p}{2}}\, dx\right)^{1/p}\cr\cr
&+&c_{n,p}\left[\left(\frac{\beta L}{\widetilde\ell
}\right)+\left(\frac{\widetilde L}{\widetilde\ell}\right)^{\frac{\sigma}{p}}\Bigl(\frac{\ro}{r}\Bigr)^{n/p}\right]
\left(\frac{\widetilde L}{\widetilde \ell}\right)^{\frac{1}{p}} \biggl(\int_{B_r}(\mu^2+|\nabla u|^2)^{\frac{p}{2}}\,dx \biggr)^{1/p}\cr\cr
 &+&c(n,p,\widetilde L,\widetilde \ell,\mu) r^\frac{n}{p}  \cr\cr
&\le&c_{n,p}\left[\left(\frac{\beta L}{\widetilde\ell
}\right)\left(\frac{\widetilde L}{\widetilde \ell}\right)^{\frac{1}{p}}+\left(\frac{\widetilde L}{\widetilde\ell}\right)^{\frac{\sigma+1}{p}}\Bigl(\frac{\ro}{r}\Bigr)^{n/p}\right]
 \biggl(\int_{B_r}(\mu^2+|\nabla u|^2)^{\frac{p}{2}}\,dx \biggr)^{1/p}\cr\cr &+&c(n,p,\widetilde L,\widetilde \ell,\mu)r^\frac{n}{p}\,.
\end{eqnarray}
 Therefore
\begin{eqnarray}\label{diff5555}
\int_{B_\ro}\!|\nabla u|^p\,dx
&\le&c_{n,p}\left[\left(\frac{\beta L}{\widetilde\ell
}\right)\left(\frac{\widetilde L}{\widetilde \ell}\right)^{\frac{1}{p}}+\left(\frac{\widetilde L}{\widetilde\ell}\right)^{\frac{\sigma+1}{p}}\Bigl(\frac{\ro}{r}\Bigr)^{n/p}\right]^p
 \int_{B_r}(\mu^2+|\nabla u|^2)^{\frac{p}{2}}\,dx \cr\cr &+&c(n,p,\widetilde L,\widetilde \ell,\mu) r^n\,.
\end{eqnarray}

\noindent Hence, both estimates \eqref{due4bis} and \eqref{diff5555}   can be written as
\begin{equation}\label{fin11}
\int_{B_\ro}\!|\nabla u|^p\,dx\le c_{n,p}\left[\zeta+\left(\frac{\widetilde L}{\widetilde\ell}\right)^{\frac{\sigma+1}{p}}\left(\frac{\rho}{r}\right)^{n/p}\right]^p\int_{B_r}\!|\nabla u|^p\,dx+c(n,p,\widetilde L,\widetilde \ell,\mu)r^n\,,
 \end{equation}
 where
 \begin{equation}\label{fin115}\zeta:=\begin{cases}
\left(\frac{\beta L}{\widetilde\ell
}\right)^{\frac{2}{p}}\qquad\qquad\qquad \mathrm{if}\,\,p\ge 2,\cr\cr
\left(\frac{\beta L}{\widetilde\ell
}\right)\left(\frac{\widetilde L}{\widetilde \ell}\right)^{\frac{1}{p}}\qquad\qquad \mathrm{if}\,\,1<p<2.
\end{cases}
\end{equation}
 We find the largest $\zeta<1$ for which there exists $\vartheta<1$ such that
\begin{equation*}
c_{n,p}\Biggl(\zeta+\left(\frac{\widetilde L}{\widetilde\ell}\right)^{\frac{\sigma+1}{p}}\vartheta^{n/p}\Biggr)^p=\vartheta^{n-1}\,.
\end{equation*}
This equality is equivalent to
\begin{equation*}
\zeta=\frac{\vartheta^{(n-1)/p}}{c^{\frac{1}{p}}}-\left(\frac{\widetilde L}{\widetilde\ell}\right)^{\frac{\sigma+1}{p}}\vartheta^{n/p}=:\,f(\vartheta),
\end{equation*}
where, for simplicity, we set $c=c_{n,p}$, $c>1$.
Note that such $\vartheta,\zeta\in[0,1)$ exist. Indeed
$$\frac{df}{d\vartheta}(\vartheta)=\frac{1}{p}\vartheta^{\frac{n}{p}-1}\left(\frac{n-1}{c^{\frac{1}{p}}}\vartheta^{-\frac{1}{p}}-n\left(\frac{\widetilde L}{\widetilde\ell}\right)^{\frac{\sigma+1}{p}}\right)\,,$$
and so
$$\frac{df}{d\vartheta}(\vartheta)=0\qquad\Leftrightarrow\qquad \vartheta=\frac{1}{c}\left(\frac{n-1}{n}\right)^{p}\left(\frac{\widetilde L}{\widetilde\ell}\right)^{-(\sigma+1)}.$$
Set $$\vartheta_0:=\frac{1}{c}\left(\frac{n-1}{n}\right)^{p}\left(\frac{\widetilde L}{\widetilde\ell}\right)^{-(\sigma+1)}\qquad \mathrm{and}\qquad \zeta_0:=f(\vartheta_0)\,.$$
Since $\widetilde\ell\le \widetilde L$ and $c\ge 1$ it follows that $\vartheta_0\in (0,1)$ and
$$f(\vartheta_0)=\max_{\vartheta\in [0,1]}f(\vartheta).$$
Moreover,
\begin{eqnarray*}
\zeta_{_0}&=&\frac{\vartheta_0^{(n-1)p}}{c^{\frac{1}{p}}}-\left(\frac{\widetilde L}{\widetilde\ell}\right)^{\frac{\sigma+1}{p}}\vartheta_0^{\frac{n}{p}}=
\vartheta_0^{\frac{n}{p}}\Bigg(\frac{\vartheta_0^{-\frac{1}{p}}}{c^{\frac{1}{p}}}-\left(\frac{\widetilde L}{\widetilde\ell}\right)^{\frac{\sigma+1}{p}}\Bigg)\cr\cr
&=&\vartheta_0^{\frac{n}{p}}\Bigg(\frac{n}{n-1}-1\Bigg)\left(\frac{\widetilde L}{\widetilde\ell}\right)^{\frac{\sigma+1}{p}}=\vartheta_0^{\frac{n}{p}}\Bigg(\frac{1}{n-1}\Bigg)\left(\frac{\widetilde L}{\widetilde\ell}\right)^{\frac{\sigma+1}{p}}
\end{eqnarray*}
We write
$$\zeta_{_0}=\frac{(n-1)^{(n-1)}}{n^n}\frac{1}{c^{\frac{n}{p}}}\left(\frac{\widetilde L}{\widetilde\ell}\right)^{\frac{\sigma+1}{p}(1-n)}=\tilde c_{n,p}\left(\frac{\widetilde \ell}{\widetilde L}\right)^{\widetilde \sigma}$$
with $\displaystyle{\tilde c_{n,p}:=\frac{(n-1)^{(n-1)}}{n^n}\frac{1}{c^{\frac{n}{p}}}}$ and $\widetilde \sigma:=\frac{\sigma+1}{p}(n-1)$. Note that $\zeta_0\in (0,1)$.
 In case in which $p\ge 2$, we need
\begin{equation}\label{due8000}
\Bigl(\frac{\beta L}{\widetilde\ell}\Bigr)^{\frac{2}{p}}<\zeta_{_0}\,.
\end{equation}
Recalling that $\widetilde \ell\ge (\alpha+1)\ell$ and $\widetilde L\le (\beta+1)L$, in order to have \eqref{due8000} it suffices to impose that
\begin{equation}\label{due7bis}
\left(\frac{\beta}{\alpha+1}\right)^{\frac{2}{p}}\left(\frac{\beta+1}{\alpha+1}\right)^{\widetilde\sigma}< \tilde c_{n,p}\left(\frac{\ell}{L}\right)^{\tilde\sigma+\frac{2}{p}}.
\end{equation}
In fact
$$(\beta L)^\frac{2}{p}\Big((\beta+1)L\Big)^{\widetilde\sigma}<\tilde c_{n,p}\Big((\alpha+1)\ell)^{\widetilde \sigma+\frac{2}{p}}\quad\Rightarrow\quad(\beta L)^\frac{2}{p}\widetilde L^{\widetilde \sigma}<\tilde c_{n,p}(\widetilde\ell)^{\,\widetilde \sigma+\frac{2}{p}}$$
$$\quad\Leftrightarrow\quad \Bigl(\frac{\beta L}{\widetilde\ell}\Bigr)^{\frac{2}{p}}<\zeta_{_0}=\tilde c_{n,p}\left(\frac{\widetilde \ell}{\widetilde L}\right)^{\tilde \sigma}, $$
and inequality \eqref{due7bis} is clearly fulfilled if the ratio $\frac{\beta}{\alpha+1}$ is sufficiently small.

\noindent Similarly, in case in which $1<p<2$, we need
\begin{equation}\label{due88}
\left(\frac{\beta L}{\widetilde\ell
}\right)\left(\frac{\widetilde L}{\widetilde \ell}\right)^{\frac{1}{p}}<\zeta_{_0}\,.
\end{equation}
In order to have \eqref{due88}, it suffices to impose
\begin{equation}\label{due888}
\frac{\beta}{\alpha+1}\left(\frac{\beta+1}{\alpha+1}\right)^{\tilde\sigma+\frac{1}{p}}<\tilde c_{n,p}\left(\frac{\ell}{L}\right)^{\tilde\sigma+1+\frac{1}{p}}.
\end{equation}
In fact
$$\beta (\beta+1)^{\widetilde\sigma+\frac{1}{p}}L^{\tilde\sigma+1+\frac{1}{p}}<\widetilde c_{n,p}((\alpha+1)\ell)^{\widetilde\sigma+1+\frac{1}{p}}\quad\Rightarrow\quad
(\beta L)\widetilde L^{\widetilde\sigma+\frac{1}{p}}<\tilde c_{n,p}(\widetilde \ell)^{\widetilde\sigma+1+\frac{1}{p}}$$
$$\quad\Leftrightarrow\quad\left(\frac{\beta L}{\widetilde\ell
}\right)\left(\frac{\widetilde L}{\widetilde \ell}\right)^{\frac{1}{p}}<\tilde c_{n,p}\left(\frac{\widetilde \ell}{\widetilde L}\right)^{\widetilde \sigma}=\zeta_{_0} $$
and inequality \eqref{due888} is clearly fulfilled if the ratio $\frac{\beta}{\alpha+1}$ is sufficiently small.

\noindent Then, choosing $\alpha,\beta$ such that \eqref{due8000} (if $p\ge 2$) or \eqref{due88} (if $1<p< 2$) are satisfied,  in view of \eqref{fin11}   there exist $\vartheta\in(0,\vartheta_0)$ and $\bar\delta>0$, depending on $\alpha,\beta,n,p,\ell,L$, such that
$$
\int_{B_{\vartheta r}}\!|\nabla u|^p\,dx\leq\vartheta^{n-1+p\bar\delta}\int_{B_r}\!|\nabla u|^p\,dx+c(n,p,\mu, \widetilde L, \widetilde \ell)r^n\,.
$$
Since $r<1$, the term $r^n$ can be majorized by $r^{n-1+p\delta}$, for every $0<\delta<\min\{\bar \delta, \frac{1}{p}\}$, and from the previous estimate, we deduce that
$$
\int_{B_{\vartheta r}}\!|\nabla u|^p\,dx\leq\vartheta^{n-1+p\bar\delta}\int_{B_r}\!|\nabla u|^p\,dx+c(n,p,\mu, \widetilde L, \widetilde \ell)r^{n-1+p\delta}\,.
$$
This estimate, by virtue of Lemma \ref{iter},  yields that for all $0<\ro<r<1$
\begin{equation}\label{hold}
\int_{B_\ro}\!|\nabla u|^p\,dx\leq c(n)\Bigl(\frac{\ro}{r}\Bigr)^{n-1+p\delta}\int_{B_r}\!|\nabla u|^p\,dx+c\rho^{n-1+p\delta}
\end{equation}
So, for $0<\ro\ll 1$
\begin{equation*}
\int_{B_\ro}\!|\nabla u|^p\,dx\leq c\rho^{n-1+p\delta}
\end{equation*}
and by H\"older's inequality
\begin{equation*}\label{hold1}
\int_{B_\ro}\!|\nabla u|\,dx\le c \left(\int_{B_\ro}\!|\nabla u|^p\,dx\right)^{\frac{1}{p}}\ro^{\frac{n}{p'}}\le c \ro^{\frac{n-1+p\delta+n(p-1)}{p}}=c\rho^{n-1+\delta+\frac{1}{p'}}\,.
\end{equation*}
By  Theorem \ref{Morrey}, the previous inequality  implies that $u\in C^{0,\frac{1}{p'}+\delta}_{\mathrm{loc}}(\Om)$  whenever \eqref{due7bis} (if $p\ge 2$) or \eqref{due888} (if $1<p<2$)
hold true.
\medskip

\noindent {\bf Step 2.} Fix a point $x\in\Om$ and let $\bar r>0$ be such that dist$(x,\partial\Om)>\bar r$. Consider  $0<r<r_0\le \bar r$ and denote by $A$ any set of finite perimeter such that $E\Delta A\subset\!\subset B_r(x)$. From Theorem~\ref{uno} we have that
$$
{\mathcal I}_{\lambda_0}(u,E)\leq{\mathcal I}_{\lambda_0}(u,A)\,,
$$
and thus
$$ \int_\Om \Big(F(\nabla u)+\chi_{_{E}}G(\nabla u)\Big)\,dx+P(E,\Om)+\lambda_0\big||E|-d\big|$$
$$\le \int_\Om \Big(F(\nabla u)+\chi_{_{A}}G(\nabla u)\Big)\,dx+P(A,\Om)+\lambda_0\big||A|-d\big|. $$
Using that $E\Delta A\subset\!\subset B_r(x)$,  we deduce that
\begin{eqnarray*}
P(E,B_r(x))-P(A,B_r(x))&\leq&\int_{B_r}\Big(\chi_{_A}(x)-\chi_{_E}(x)\Big)G(\nabla u)\,dx+\lambda_0\bigl||A|-|E|\bigr|
\cr\cr &\leq& \beta L\int_{B_r}|\nabla u|^p+cr^n\,,
\end{eqnarray*}
where we invoked assumption (G2). By the decay estimate  \eqref{hold}, we infer that
\begin{equation*}
P(E,B_r(x))-P(A,B_r(x))\leq cr^{n-1+p\delta}+c r^n\le cr^{n-1+p\delta}
\end{equation*}
since  $r<1$.
As $\delta$  can be replaced  by any smaller number,  we can choose $p\delta<\frac{1}{2}$ and the result  follows  from Theorem~\ref{tre}.
\end{proof}
\vskip 5pt
\noindent

\bigskip

\section{Partial regularity--Proof of Theorem \ref{duetre}}

\bigskip
In this section, we prove that a  partial regularity result holds without imposing any bounds on $\alpha$ and $\beta$,
as stated in Theorem~\ref{duetre}.

\bigskip

\begin{proof}[Proof of Theorem~\ref{duetre}]

 Set
$$\Omega_0:=\left\{x\in \Omega: \limsup_{\rho\to 0}\frac{1}{{\rho}^{n-1+p\delta}}\int_{B_{\rho}}|\nabla u|^p=0\,\right\},$$
for an arbitrary $0<\delta<\frac{1}{p}$.
Note that (see Theorem 3, Section 2.4.3,  in \cite{EG}), $|\Omega\setminus \Omega_0|=0\,.$
Fix  a point $x\in\Om_0$  and let $r_0$ be such that dist$(x,\partial\Om)>r_0$.
Since $x\in\Om_0$, for every $\varepsilon>0$ there exists a radius $ R\,=\,R(\varepsilon)<r_0$ such that
\begin{equation}\label{epsi}\frac{1}{{ {r}}^{n-1+p\delta}}\int_{B_{r} }|\nabla u|^p<\varepsilon\end{equation}
for all $ 0<r\le R(\varepsilon)$.
By  \eqref{fin11} and \eqref{due8000},   for all $0<\ro<r$ we have
\begin{equation*}
\int_{B_\ro}\!|\nabla u|^p\,dx\le c_{n,p}\left[\zeta+\left(\frac{\widetilde L}{\widetilde\ell}\right)^{\frac{\sigma+1}{p}}\left(\frac{\rho}{r}\right)^{n/p}\right]^p\int_{B_r}\!|\nabla u|^p\,dx+c(n,p,\widetilde L,\widetilde \ell,\mu)r^n
 \end{equation*}
Inserting \eqref{epsi} in previous inequality, we get
\begin{equation*}
\int_{B_\ro}\!|\nabla u|^p\,dx\leq c\,\, \varepsilon^{\frac{1}{2}}\,\,\Biggl[1+\Bigl(\frac{\ro}{r}\Bigr)^{n/p}
\Biggr]^p r^{\frac{n-1+p\delta}{2}}\left(\int_{B_r}\!|\nabla u|^p\,dx\right)^{\frac{1}{2}}+cr^n\,,
\end{equation*}
where $c=c(n,p,\mu, \alpha, \beta,\ell, L)$.
By Young's inequality, we deduce that
\begin{eqnarray}\label{due511}
\int_{B_\ro}\!|\nabla u|^p\,dx&\leq& c\,\, \varepsilon^{\frac{1}{2}}\,\,\Biggl[1+\Bigl(\frac{\ro}{r}\Bigr)^{n/p}
\Biggr]^p\left\{\int_{B_r}\!|\nabla u|^p\,dx+r^{n-1+p\delta}\right\}+cr^n\cr\cr
&\leq& c\,\, \varepsilon^{\frac{1}{2}}\,\,\Biggl[1+\Bigl(\frac{\ro}{r}\Bigr)^{n/p}
\Biggr]^p\int_{B_r}\!|\nabla u|^p\,dx+cr^{n-1+p\delta}\,.
\end{eqnarray}
for every $0<\rho<r\le R(\varepsilon)$, since we may suppose, without loss of generality, that $r<1$. Therefore, in particular, writing \eqref{due511} for $\rho=\frac{r}{2}$, we get
\begin{equation}\label{5110}
\int_{B_{\frac{r}{2}}}\!|\nabla u|^p\,dx\leq
 c\,\, \varepsilon^{\frac{1}{2}}\,\,\Biggl[1+\Bigl(\frac{1}{2}\Bigr)^{n/p}
\Biggr]^p\int_{B_r}\!|\nabla u|^p\,dx+cr^{n-1+p\delta}\,.
\end{equation}
Choosing   $\varepsilon$ in \eqref{5110} such that
\begin{equation*}
\varepsilon^{\frac{1}{2}}<\frac{2^{1-p}} {c\Big(1+2^{\frac{n}{p}}\Big)^{p}}\,,
\end{equation*} we obtain
\begin{eqnarray}\label{due81}
\int_{B_{\frac{r}{2}}}\!|\nabla u|^p\,dx&\leq&
 \frac{2^{1-p}} {\Big(1+2^{\frac{n}{p}}\Big)^{p}}\Biggl[1+\Bigl(\frac{1}{2}\Bigr)^{n/p}
\Biggr]^p\int_{B_r}\!|\nabla u|^p\,dx+cr^{n-1+p\delta}\cr\cr
&=& \frac{2^{1-p}} {\Big(1+2^{\frac{n}{p}}\Big)^{p}}\frac{\Big(1+2^{\frac{n}{p}}\Big)^{p}}{2^n}\int_{B_r}\!|\nabla u|^p\,dx+cr^{n-1+p\delta}\cr\cr
&=&\left(\frac{1}{2}\right)^{n-1+p}\int_{B_r}\!|\nabla u|^p\,dx+cr^{n-1+p\delta}\,.
\end{eqnarray}
From \eqref{due81}, thanks to Lemma \ref{iter} applied with $\varphi(r):=\int_{B_r}|\nabla u|^p\,dx$ and $\vartheta=\frac{1}{2}$, we obtain that
\begin{equation}\label{due82}
\int_{B_\ro}\!|\nabla u|^p\,dx\leq c\rho^{n-1+p\delta}
\end{equation}
for all $0<\ro<r<r_0$ and some $c=c(n,p, \alpha, \beta,\ell, L)$.
Hence, by virtue of Theorem \ref{Morrey} and H\"older inequality, we deduce that $u\in C^{0,\frac{1}{p'}+\delta}(\Omega_0)$, for every $0<\delta<\frac{1}{p}$.

\noindent Let us denote by $A$ any set of finite perimeter such that $E\Delta A\subset\!\subset B_{\rho}(x)$. From Theorem~\ref{uno} we have that
$$
{\mathcal I}_{\lambda_0}(u,E)\leq{\mathcal I}_{\lambda_0}(u,A)\,,
$$
therefore, by assumption (G1) and the decay estimate \eqref{due82}, we deduce that
\begin{eqnarray*}
P(E,B_{\rho}(x))-P(A,B_{\rho}(x))&\leq&\int_{B_{\rho}}(\chi_{A}(x)-\chi_{E}(x))G(\nabla u)\,dx+\lambda_0\bigl||A|-|E|\bigr|\cr\cr
&\leq& \beta L\int_{B_{\rho}}|\nabla u|^p\,dx+\lambda_0\bigl||A|-|E|\bigr|
\leq c  \rho^{n-1+p\delta}+c\lambda_0{\rho}^n\,
\end{eqnarray*}
and the conclusion follows again by Theorem ~\ref{tre} applied to  $\Omega_0$ in place of $\Omega$.

\end{proof}\bigskip

\noindent
{\bf Acknowledgment.} The authors warmly thank the Center for Nonlinear Analysis (NSF Grants No. DMS-
0405343 and DMS-0635983), where part of this research was carried out. The research of
I. Fonseca was partially funded by the National Science Foundation under Grant No. DMS-
0905778.

\noindent
Dipartimento di Ingegneria - Universit\`{a} del Sannio,  82100 Benevento, Italy

\noindent
{\em E-mail address}: carozza@unisannio.it
\medskip

\noindent
Department of Mathematical Sciences , Carnegie Mellon University, Pittsburgh PA

\noindent
{\em E-mail address}: fonseca@andrew.cmu.edu
\medskip

\noindent
Universit\`{a} di Napoli ''Federico II'' Dipartimento di Mat.~e Appl. 'R.~Caccioppoli',
Via Cintia, 80126 Napoli, Italy

\noindent
{\em E-mail address}: antpassa@unina.it

\end{document}